\documentclass[10pt]{article}

\newcommand{\hide}[1]{}

\usepackage[a4paper,margin=1in]{geometry}
\usepackage{amsmath,amssymb,amsthm,mathtools}
\usepackage{microtype}
\usepackage[skip=0.45\baselineskip plus 2pt,indent=0pt]{parskip}
\usepackage{tikz}
\usepackage{scalerel}
\usepackage[colorlinks=true,linkcolor=blue,citecolor=blue,urlcolor=blue]{hyperref}

\numberwithin{equation}{section}

\newtheorem{theorem}{Theorem}[section]
\newtheorem{lemma}[theorem]{Lemma}
\newtheorem{proposition}[theorem]{Proposition}
\newtheorem{corollary}[theorem]{Corollary}
\newtheorem{construction}[theorem]{Construction}

\newtheorem{problem}[theorem]{Problem}
\newtheorem{claim}[theorem]{Claim}
\newtheorem{fact}[theorem]{Fact}

\DeclareMathOperator{\PSL}{PSL}
\DeclareMathOperator{\PGL}{PGL}
\DeclareMathOperator{\Cay}{Cay}
\DeclareMathOperator{\ex}{ex}

\newcommand{\R}{\mathbb{R}}
\newcommand{\F}{\mathbb{F}}
\newcommand{\N}{\mathbb{N}}

\newsavebox\vvvbox
\savebox\vvvbox{\tikz{
        \draw[black,fill=black] (90:1) circle (.35);
        \draw[black,fill=black] (210:1) circle (.35);
        \draw[black,fill=black] (330:1) circle (.35);
        \draw[opacity=0] (0:1.2) circle (0.1);
}}
\newcommand{\vvv}{\mathord{\scaleobj{1.2}{\scalerel*{\usebox{\vvvbox}}{x}}}}
\newcommand{\pivvv}{\pi_{\vvv}}
\newcommand{\fin}{\mathrm{fin}}
\newcommand{\eps}{\varepsilon}

\title{\Large\bf Intervals of hypergraph Tur\'an densities}
\author{%
Xizhi Liu\thanks{\scriptsize School of Mathematical Sciences, University of Science and Technology of China, Hefei, China.
Email:~\texttt{liuxizhi@ustc.edu.cn}.}
\and
Oleg Pikhurko\thanks{\scriptsize Mathematics Institute and DIMAP, University of Warwick, Coventry CV4 7AL, UK.
Email:~\texttt{pikhurko@gmail.com}.}}
\date{\today}

\begin{document}
\maketitle

\begin{abstract}
    We prove that, for every integer $r\ge 3$, the set $\Pi^{(r)}_\infty$ of Tur\'an densities of (possibly infinite) families of $r$-graphs contains non-degenerate intervals, including an interval of the form $[1-\delta_r,1]$ for some $\delta_{r}>0$. This answers a question of Frankl, Peng, R\"odl and Talbot from 2007. This also shows that the Hausdorff dimension of $\Pi^{(r)}_\infty$ has the maximum possible value 1, thus resolving a question of Grosu from 2016, whereas previously it was not even known whether it is non-zero.
We also derive that the set 
of uniform Tur\'an densities of finite families of $3$-graphs is dense in a non-degenerate interval. 

\hide{    
The proof starts with a sequence of $3$-graphs $P_i$ whose Lagrangians converge slowly from above to a point $\alpha$, using 
the Ramanujan Cayley graphs constructed by Lubotzky--Phillips--Sarnak. We then incorporate this slowly convergent sequence into a suitable recursive construction, so that infinite subsequences can be used to approximate any prescribed real number in an interval.}
\end{abstract}
\section{Introduction}

Given an integer $r\ge 2$, an $r$-uniform hypergraph, or simply an $r$-graph, is a collection of $r$-subsets of a vertex set.  For a hypergraph $H$, let $V(H)$ denote its vertex set, and write $v(H)\coloneqq |V(H)|$ for its number of vertices. We identify a hypergraph with its edge set, and hence write $|H|$ for the number of edges of $H$.

Given a family $\mathcal F$ of $r$-graphs, let $\ex(n,\mathcal F)$ be the maximum number of edges in an $\mathcal F$-free $r$-graph on $n$ vertices. The \emph{Tur\'an density} of $\mathcal F$ is
\[
        \pi(\mathcal F) \coloneqq \lim_{n\to\infty}{\ex(n,\mathcal F)}/{\tbinom nr}.
\]
The existence of this limit follows from the averaging argument of Katona, Nemetz and Simonovits~\cite{KatonaNemetzSimonovits1964}. Let $\Pi^{(r)}_{\fin}$ denote the set of Tur\'an densities of finite families of $r$-graphs, and let $\Pi^{(r)}_\infty$ denote the corresponding set when the forbidden family is allowed to be infinite. We allow $\mathcal F$ to be empty, and hence $1\in \Pi^{(r)}_{\fin}$.

For graphs, the seminal Erd\H{o}s--Stone--Simonovits theorem~\cite{ErdosStone1946,ErdosSimonovits1966} determines all possible Tur\'an densities:
\[
        \Pi^{(2)}_{\fin} = \Pi^{(2)}_\infty = \{1-1/k \colon k\in\mathbb N,\ k\ge 1\} \cup \{1\}.
\]
For $r\ge3$, determining $\pi(\mathcal F)$ for a given family $\mathcal F$ is notoriously hard in general. This problem, for example,  for the complete  $\ell$-vertex $r$-graph $K_\ell^{(r)}$, raised in~\cite{Turan1941}, remains wide open.
 In fact, the $\$500$ reward of  Erd\H{o}s, see e.g.~\cite{Erd84}, for determining $\pi(K_\ell^{(r)})$ in at least one case with $\ell>r\ge3$ is still unclaimed. For more background on hypergraph Tur\'an problems, see the surveys of F{\"u}redi~\cite{Furedi1991}, Sidorenko~\cite{Sidorenko1995}, and Keevash~\cite{Keevash2011}.

The study of the algebraic and topological properties of the sets $\Pi^{(r)}_{\fin}$ and $\Pi^{(r)}_{\infty}$ for $r\ge 3$ goes back to the classical work of Erd\H{o}s~\cite{Erdos1964}. We call a number $\alpha\in[0,1)$ a \emph{jump} for $r$-graphs if there is a constant $c>0$ such that $\Pi^{(r)}_\infty\cap(\alpha,\alpha+c)=\emptyset$. Otherwise, we call $\alpha$ a \emph{non-jump}. In~\cite{Erdos1964} Erd\H{o}s proved that $\Pi^{(r)}_\infty\cap(0,r!/r^r)=\emptyset$ for every $r\ge3$ and asked whether every number is a jump. This became one of his famous $\$1000$ problems. Frankl and R\"odl~\cite{FranklRodl1984} famously disproved this by proving the existence of non-jumps for every $r\ge3$.

The topological viewpoint on possible Tur\'an densities was developed by the second author in~\cite{Pikhurko2014}, where it was proved that  $\Pi^{(r)}_\infty=\overline{\Pi^{(r)}_{\fin}}$ (in particular, $\Pi^{(r)}_\infty$ is closed) and that $\Pi^{(r)}_\infty$ has cardinality continuum for every $r\ge3$. On the algebraic side, for some time it was open whether $\Pi^{(r)}_{\fin}$ could contain an irrational number; see the conjecture of Chung and Graham~\cite[p.~95]{ChungGraham1998}. Such examples were independently found by Baber and Talbot~\cite{BaberTalbot2012} and by the second author~\cite{Pikhurko2014}.
Grosu~\cite{Grosu2016} did a systematic study of algebraic properties of $\Pi^{(r)}_{\fin}$ and $\Pi^{(r)}_\infty$ and asked whether, for some $r\ge3$, the set $\Pi^{(r)}_{\fin}$ contains an algebraic number of degree greater than $r-1$. This was answered in a strong form by the authors~\cite{LiuPikhurko2023}, who showed that, for every $r\ge3$ and every integer $d$, the set $\Pi^{(r)}_{\fin}$ contains a number of algebraic degree at least $d$. Further work on gaps in $\Pi_{\infty}^{(3)}$ includes Baber and Talbot's flag algebra proof of explicit jumps in $\Pi^{(3)}_\infty$~\cite{BaberTalbot2011}, based on Razborov's method~\cite{Razborov2007} and a criterion of Frankl and R\"odl; see also~\cite{BrownSimonovits1984,Pikhurko2015}. Codegree and $\ell$-degree versions of Tur\'an density were studied by Mubayi and Zhao~\cite{MubayiZhao07} and by Lo and Markstr\"om~\cite{LoMarkstrom14}; in that setting, the set of densities was shown to form a dense set in $[0,1]$. The multigraph analogue of the jumping constant conjecture was studied by R\"odl and Sidorenko~\cite{RodlSidorenko95}. Further work on higher-uniformity non-jumps and jumping densities includes e.g.~\cite{Peng07NonJumping4Uniform,Peng09JumpingDensities,Peng2008I,Peng2007II,HouLiYangZhang24}.

Despite extensive study of possible Tur\'an densities, the fundamental question of whether $\Pi^{(r)}_\infty$ contains any interval of positive length for $r\ge3$ remained open. More specifically, Frankl, Peng, R\"odl and Talbot~\cite[Question~6.1]{FPRT2007} asked whether, for every $r\ge3$, there is $\alpha_r<1$ such that the whole terminal interval $[\alpha_r,1]$ lies in $\Pi^{(r)}_\infty$. Grosu~\cite[Problem~5]{Grosu2016} asked to determine the Hausdorff dimension of $\Pi^{(r)}_\infty$ or even just to prove that it is non-zero.

Our main result settles these questions. In particular, it proves that, for every uniformity $r\ge3$, the set of possible Tur\'an densities contains a terminal interval. Consequently, $\Pi^{(r)}_\infty$ has positive Lebesgue measure, and has Hausdorff dimension $1$.

\begin{theorem}\label{thm:main} For every integer $r\ge 3$, there exists $\delta_r>0$ such that $[1-\delta_r,1]\subseteq \Pi^{(r)}_\infty$. \end{theorem}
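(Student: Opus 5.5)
We plan to realize every $\beta\in[1-\delta_r,1]$ as the Lagrangian-type density of a suitable infinite family via \emph{pattern} blow-ups, using the framework of~\cite{Pikhurko2014}. There it is shown that, for any (possibly infinite) recursive pattern $P$, the maximum edge density $\lambda_P$ of $P$-constructions lies in $\Pi^{(r)}_\infty$. Since $\Pi^{(r)}_\infty$ is closed, it suffices to produce a \emph{dense} subset of $[1-\delta_r,1]$ inside $\Pi^{(r)}_\infty$. Even better, we aim to hit each target exactly by a limiting argument: we produce a countable family of ``atomic'' densities that can be combined with enough flexibility.

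The key ingredient is a sequence of $r$-graphs (first for $r=3$, then lifted to $r\ge 4$, e.g.\ by adding $r-3$ universal recursive parts or taking links) whose Lagrangians $\lambda(P_i)$ converge to a limit $\alpha$ \emph{slowly}. We need the gaps $\lambda(P_i)-\lambda(P_{i+1})$ to be summable, but each gap must be at most the sum of all subsequent gaps. Such a sequence makes the subsums $\sum_{i\in I}(\lambda(P_i)-\alpha)$ cover a whole interval, as in the classical lemma that $\{\sum_{i\in I} a_i\}$ is an interval whenever $a_i\le\sum_{j>i}a_j$. To build the $P_i$ with controlled Lagrangians, I would take $3$-graphs derived from high-girth, spectrally well-behaved graphs, e.g.\ the Lubotzky--Phillips--Sarnak Ramanujan Cayley graphs. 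Spectral control pins down the optimal weighting and hence the Lagrangian precisely, while high girth guarantees that no unintended dense substructures raise the Lagrangian.

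Next we embed the chosen subsequence $(P_i)_{i\in I}$ into one recursive construction. Take a near-complete top level, so that densities near $1$ arise: for example, the complete $r$-graph on $m$ parts, with recursion allowed inside parts, gives density tending to $1$. We then insert gadgets $P_i$, $i\in I$, into successive recursive levels so that the total density is a continuous, monotone function of the ``choice vector'' $I$, with increments comparable to $\lambda(P_i)-\alpha$. Let $\mathcal F_I$ be the family of all $r$-graphs not embeddable into this construction. We then need two facts:
\begin{itemize}
\item[(i)] $\pi(\mathcal F_I)$ equals the construction's density;
\item[(ii)] the resulting map $I\mapsto\pi(\mathcal F_I)$ has image containing $[1-\delta_r,1]$.
\end{itemize}

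The main obstacle is (i), namely showing that the optimal weighting of the combined pattern is the intended one. Adding gadgets must not create a better non-intended assignment of weights, for example one that places all mass inside some $P_i$ or mixes levels. I would handle this with a stability/local-optimality argument in the spirit of \cite{Pikhurko2014,LiuPikhurko2023}. First I would prove that each $P_i$ has a unique optimal weighting with a strictly negative definite Hessian, uniformly in $i$; this is the point where the Ramanujan spectral gap is used. Then a perturbation argument shows that the recursive combination's optimum is close to the intended one, and that it is in fact exactly the intended one. With (i) in hand, (ii) follows from the interval lemma applied to the slowly converging gaps, together with the affine dependence of the total density on the gadget contributions near $1$.
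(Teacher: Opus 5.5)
There is a genuine gap in step (ii), and (ii) is where the real work lies. In a recursive construction the gadgets do not contribute affinely with equal weights. Write $\varepsilon_i=\lambda(P_i)-\alpha$. The gadget placed at the $s$-th recursion level contributes roughly $Rq^{s-1}\varepsilon_{a_s}$, where $q=(z_1^*)^3\in(0,1)$ and $R=(z_2^*)^3$ come from an optimal top-level weighting. So the weight depends on the \emph{rank} of $a_s$ in $I$ and decays geometrically; the exact dependence is moreover nonlinear, since each level is a maximum over weightings. Your hypothesis does not survive this. ``Each gap at most the sum of the subsequent gaps'' means $\varepsilon_i\le 2\varepsilon_{i+1}$, and the subsum condition is $\varepsilon_i\le\sum_{j>i}\varepsilon_j$; $\varepsilon_j=2^{-j}$ satisfies both with equality. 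For this sequence the linearised value of using all of $P_{j+1},P_{j+2},\dots$ is $R\sum_{s\ge1}q^{s-1}\varepsilon_{j+s}=R\varepsilon_j/(2-q)<R\varepsilon_j$, which is less than the value of $P_j$ alone, so a greedy choice leaves holes. What is needed is the overlap inequality $\psi_{\beta_j}(\mu)\le\nu_{j+1}$. Since $q$ is a fixed constant, this essentially forces $\varepsilon_j/\varepsilon_{j+1}\to1$. One also needs a one-step bound $\psi_{\alpha+\eta}(\mu)\le\mu+R\eta+o(\eta)$, with $R$ maximal over all optimal weightings, so that re-optimisation cannot inflate the first-order coefficient. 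Producing such a smooth sequence is the heart of the proof, and your proposal gives no mechanism for it. The paper does it as follows:
\begin{itemize}
\item it takes the $3$-partite gadget with edges $\{x,y,xsy\}$, $s\in S_j$, whose links are LPS Cayley graphs;
\item it uses expander mixing to get $P_{H_j}(\mathbf y)\le \frac{2D}{3m_j}+\theta\bigl(\sum_v y_v^2-\frac1{m_j}\bigr)$;
\item it places $k$ copies inside a complete $k$-partite $3$-graph, where the penalty $-3(1-w_i)w_i^2r_i$ dominates the spectral error $\theta w_i^3r_i$ because $w_i<3/(\theta+3)$;
\item this yields $\lambda(Q_j)=\alpha+(c_0+o(1))/m_j$, matched from below by the uniform weighting;
\item it gets $m_{j+1}/m_j\to1$ from primes $q_j\equiv1\pmod{4p}$ with $q_{j+1}/q_j\to1$, and then extracts a decreasing subsequence.
\end{itemize}
High girth plays no role.

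Conversely, your ``main obstacle'' (i) is not an obstacle. By Pikhurko's $A$-configuration argument, $\pi(\mathcal F_A)$ equals the recursive value $\Lambda_A=\lim_{s}\psi_{\beta_{a_1}}\circ\cdots\circ\psi_{\beta_{a_s}}(0)$. This value already maximises over all weightings at every level and depends on each gadget only through its Lagrangian. So you need no uniqueness of optimal weightings, no Hessian nondegeneracy and no stability; a Hessian bound ``uniform in $i$'' for gadgets on $m_i\to\infty$ vertices is not the right kind of statement anyway. Finally, one fixed recursive pattern only yields an interval $[\mu,\nu_1]$ bounded away from $1$, so a ``near-complete top level'' does not by itself give $[1-\delta_r,1]$. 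You need a separate step, such as the complete join $J_M$, which sends $x\in\Pi^{(r)}_\infty$ to $1-(1-x)/M^{r-1}\in\Pi^{(r)}_\infty$ via the characterisation by sequences with $\varrho(G_n),\lambda(G_n)\to x$. The images of a fixed interval $[a,b]$ for consecutive $M$ overlap once $(M/(M+1))^{r-1}\ge(1-b)/(1-a)$. Together with $1\in\Pi^{(r)}_\infty$, this gives the terminal interval. Your lift to $r\ge4$ by adding $r-3$ extra classes is fine; the paper's lift is $x\mapsto c_rx$.
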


Although Theorem~\ref{thm:main} is stated qualitatively, our proof is quantitative in principle: for each fixed $r$, it gives an explicit positive value of $\delta_r$ once all auxiliary parameters are chosen explicitly. We make no attempt to optimize this value. In particular, the need to take a sufficiently far tail of primes in a fixed arithmetic progression, using explicit estimates for primes in arithmetic progressions such as those in~\cite{BennettMartinOBryantRechnitzer2018}, causes a large loss, so the resulting $\delta_r$ is very small. We discuss this further in Section~\ref{sec:concluding-remarks}.

We briefly indicate the main ingredients of the proof of Theorem~\ref{thm:main}. First, we construct a slowly convergent sequence of finite $3$-graphs $P_i$ whose Lagrangians approach a limit $\alpha$ from above, with successive gaps to the limit asymptotically equal (Theorem~\ref{thm:smooth-sequence}). 
This sequence is built from a $3$-partite spectral gadget arising from the Ramanujan regular graphs constructed by  Lubotzky--Phillips--Sarnak (LPS), with the properties used by us being summarised in Theorem~\ref{thm:LPS}. We then incorporate it into a recursive construction of possible Tur\'an densities (Section~\ref{sec:recursive-interval}). The near equality of the successive gaps makes the recursive tails overlap, and an infinite subsequence can be chosen so as to approximate any prescribed real number in a non-degenerate interval. This fills an interval of densities in uniformity $3$. Finally, a standard complete join operation moves such an interval to a terminal interval near $1$, and a standard lifting argument transfers the result to every uniformity $r\ge3$.

\bigskip 

There is a parallel uniformly dense Tur\'an theory for $3$-graphs, initiated by Erd\H{o}s and S\'os~\cite{ErdosSos1982}, in which the host $3$-graph is required to be dense not only globally, but also on every vertex subset of linear size.
Given $d\in[0,1]$ and $\eta>0$, we say that a $3$-graph $H$ on $n$ vertices is \emph{$(d,\eta,\vvv)$-uniformly dense} (or just \emph{$(d,\eta,\vvv)$-dense}) if
\[
        |H[U]|\ge d\tbinom{|U|}{3}-\eta n^3 \qquad\text{for every } U\subseteq V(H).
\]
The subscript $\vvv$ records that the density condition is imposed on vertex subsets. The additive error makes the condition robust on all linear-sized vertex subsets. For a family $\mathcal F$ of $3$-graphs, its \emph{uniform Tur\'an density} is
\[
        \pivvv(\mathcal F)
        \coloneqq
        \sup\left\{
        d\in[0,1]\colon
        \begin{array}{l}
        \text{for every }\eta>0\text{ and }n_0\in\N\text{, there exists an}\\
        \mathcal F\text{-free }(d,\eta,\vvv)\text{-dense }3\text{-graph }H
        \text{ with }v(H)\ge n_0
        \end{array}
        \right\}.
\]
Equivalently, every density strictly larger than $\pivvv(\mathcal F)$ forces a member of $\mathcal F$ in all sufficiently large uniformly dense host $3$-graphs. We write
\[
        \Pi_{\vvv,\fin} \coloneqq \{\pivvv(\mathcal F)\colon \mathcal F\text{ is a finite family of }3\text{-graphs}\}.
\]
Among their motivating questions were the determination of the uniform Tur\'an densities of $K_4^{(3)-}$ and $K_4^{(3)}$, where $K_4^{(3)-}$ denotes the $3$-graph on four vertices with three edges. The former was settled by Glebov, Kr\'al' and Volec~\cite{GlebovKralVolec2016}, who showed that $\pivvv(K_4^{(3)-})=1/4$ using a computer-generated proof via flag algebras. The value of $\pivvv(K_4^{(3)})$ remains open. In a series of foundational works, Reiher, R\"odl and Schacht~\cite{ReiherRodlSchacht2018,ReiherRodlSchacht2018Mantel,ReiherRodlSchacht2018Vanishing,Reiher2020,Schacht2022ICM} developed embedding methods for uniformly dense hypergraphs via the hypergraph regularity method~\cite{Gowers2006,RodlSkokan2004,NagleRodlSchacht2006}, in particular re-proving the above result on $K_4^{(3)-}$ without using a computer. 

There have been recent studies on the structure of possible uniform Tur\'an densities. Reiher, R\"odl and Schacht~\cite{ReiherRodlSchacht2018Vanishing} characterized the $3$-graphs with vanishing uniform Tur\'an density and showed that $0$ is followed by a gap: specifically, there is no uniform Tur\'an density in the interval $(0,1/27)$. Garbe, Kr\'al' and Lamaison~\cite{GarbeKralLamaison2024} proved that this first jump is sharp by constructing $3$-graphs with uniform Tur\'an density exactly $1/27$. More recently, the palette viewpoint of Lamaison~\cite{Lamaison2024Palettes} and King, Piga, Sales and Sch\"ulke~\cite{KingPigaSalesSchulke2025} has connected classical non-jumps with non-jumps in the set of finite-family uniform Tur\'an densities.

Our construction also has a uniformly dense analogue for finite forbidden families. This follows from the density of $3$-graph Lagrangians in the interval constructed here, together with the theorem of King, Piga, Sales and Sch\"ulke~\cite{KingPigaSalesSchulke2025} that every finite-palette Lagrangian is realized as the uniform Tur\'an density of a finite forbidden family.

\begin{corollary}\label{cor:dense-uniform-finite-interval} There exist real numbers $\alpha < \beta$ such that $[\alpha, \beta]\subseteq \overline{\Pi_{\vvv,\fin}}$. Equivalently, $\Pi_{\vvv,\fin}$ is dense in a non-degenerate interval. \end{corollary}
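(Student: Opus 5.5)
The approach is to combine two ingredients. The first is the interval of $3$-graph Lagrangian-type densities produced by the recursive construction of Section~\ref{sec:recursive-interval}. The second is the theorem of King, Piga, Sales and Sch\"ulke~\cite{KingPigaSalesSchulke2025}: for every finite palette $P$ (equivalently, every finite $3$-graph pattern of the relevant type), its Lagrangian $\lambda(P)$ lies in $\Pi_{\vvv,\fin}$.

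The plan is as follows. Start from the proof of Theorem~\ref{thm:main} for $r=3$, before the join and lifting steps. That proof gives a non-degenerate interval $[\alpha',\beta']$ in which every point $x$ is the Lagrangian density of a limit object. This object is built recursively from the slowly convergent sequence $(P_i)$ of Theorem~\ref{thm:smooth-sequence}: one chooses an infinite subsequence of indices, and the recursive pattern has density $x$.

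The key observation is that truncating the subsequence after finitely many steps yields a \emph{finite} pattern (a finite $3$-graph with a finite recursive structure). Its Lagrangian differs from $x$ by at most the total contribution of the omitted tail, and this contribution tends to $0$ with the truncation depth, because the recursive weights decay geometrically. So the Lagrangians of finite patterns are dense in $[\alpha',\beta']$.

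The remaining step is to rephrase these finite patterns as finite palettes in the sense of Lamaison and of King--Piga--Sales--Sch\"ulke. The standard translation views a $3$-graph $G$ on $[m]$ as a palette with $m$ colours in which the admissible triples are the ordered colour patterns coming from the edges of $G$. Under this translation, the palette Lagrangian agrees with the $3$-graph Lagrangian up to the normalising factor used for uniform densities, of the form $3!\,\lambda$. Since this factor is a fixed positive constant and hence an affine rescaling, dense subsets of $[\alpha',\beta']$ map to dense subsets of a rescaled interval $[\alpha,\beta]$. Applying the King--Piga--Sales--Sch\"ulke theorem to each truncated palette then puts a dense subset of $[\alpha,\beta]$ into $\Pi_{\vvv,\fin}$. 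Taking closures gives $[\alpha,\beta]\subseteq\overline{\Pi_{\vvv,\fin}}$, and the two formulations in Corollary~\ref{cor:dense-uniform-finite-interval} are equivalent trivially.

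The main obstacle I expect is the compatibility check in this translation. Their theorem concerns Lagrangians of palettes, which are optimised over \emph{ordered} vertex weightings, while the recursive construction in the Tur\'an setting may exploit recursion inside parts, which is a blow-up with iterated structure. The route I would try first is to make sure the finite truncations are honest finite $3$-graphs: substitute the finite $P_i$'s only to a bounded depth, so that each truncation is a single finite $3$-graph whose ordinary Lagrangian is within the tail error of $x$. Its Lagrangian can then be realised by a palette via the symmetric (all orderings) encoding. If the recursive step genuinely requires iterated blow-ups, I would instead use the fact that finite-depth iterated blow-ups are again finite $3$-graphs, and verify that the monotonicity and continuity estimates from Section~\ref{sec:recursive-interval} still control their Lagrangians.
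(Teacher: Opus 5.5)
Your proposal is correct, but it obtains density of $3$-graph Lagrangians in the interval by a different route from the paper. The paper's argument is three lines. Theorem~\ref{thm:interval-3} gives $[\mu,\nu_1]\subseteq\Pi^{(3)}_\infty$. Pikhurko's $\overline{\Lambda^{(3)}}=\Pi^{(3)}_\infty$ (Theorem~\ref{thm:pikhurko-lagrangian-densities}) then makes $\Lambda^{(3)}$ dense there. Finally, Lemma~\ref{lem:graph-lagrangian-palettes} with $k=6$ gives $\Lambda^{(3)}\subseteq\Pi_{\vvv,\fin}$.

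You use only the greedy covering part of the proof of Theorem~\ref{thm:interval-3}, namely that every $y\in(\mu,\nu_1]$ equals some $\Lambda_A$. You then show directly that $\Lambda_A\in\overline{\Lambda^{(3)}}$. This works, and the worry in your last paragraph resolves in favour of your first option. Let $G_s$ be the depth-$s$ truncation: parts $3,\dots,m$ are single vertices, level $\ell$ has $H_{a_\ell}$ itself in the second part (blow-ups do not change Lagrangians), and a single vertex sits at the bottom. Every cross triple meets each part in at most one vertex, so the Lagrangian polynomial splits exactly as
$g_m(\mathbf z)+z_1^3P_{G'}(\hat{\mathbf y}_1)+z_2^3P_{H}(\hat{\mathbf y}_2)$. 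The inner maximisations therefore decouple, giving $\lambda(G_s)=\psi_{\beta_{a_1}}\circ\cdots\circ\psi_{\beta_{a_s}}(0)=\Lambda_s(\beta_{a_1},\beta_{a_2},\dots)$ exactly. Convergence to $\Lambda_A$ is then just the definition \eqref{eq:Lambda-def}, so your geometric-decay estimate is not needed.

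Your route is more self-contained. It bypasses Pikhurko's Theorem~24, Lemma~\ref{lem:A-config} and the closedness of $\Pi^{(3)}_\infty$, none of which is really needed for a statement about $\overline{\Pi_{\vvv,\fin}}$. The paper's route is shorter given those black boxes.

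One small correction: with the all-orderings palette encoding and the paper's normalisation $P_G=3!\sum_{e}\prod_{v\in e}y_v$, the palette Lagrangian equals $\lambda(G)$ exactly, not $3!\,\lambda(G)$. So no rescaling occurs and you get $[\mu,\nu_1]$ itself, although your argument would survive any fixed positive factor anyway.
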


The paper is organized as follows. Section~\ref{sec:preliminaries} collects preliminaries on Lagrangians and the Ramanujan Cayley graphs used in the spectral construction. Section~\ref{sec:smooth-sequence-construction} constructs the smooth decreasing sequence of $3$-graphs. Section~\ref{sec:recursive-interval} uses this sequence in a recursive construction to obtain an interval of possible $3$-uniform Tur\'an densities. Section~\ref{sec:extensions-uniform-analogue} turns this into a terminal interval, lifts the result to all uniformities $r\ge3$, and derives the uniformly dense finite-family corollary via finite palettes. Section~\ref{sec:concluding-remarks} contains concluding remarks.

For an integer $t\ge0$, we denote $[t]\coloneqq\{1,\ldots,t\}$, with $[0]=\emptyset$.

We use standard asymptotic notation throughout. Thus $f\sim g$ means $f/g\to1$, and $a_i\downarrow a$ means that the sequence $(a_i)_{i\ge 1}$ decreases monotonically to $a$, staying strictly above $a$. The symbol $\to$ refers to the limiting process specified by the surrounding statement, while $t\to0^+$ means that $t$ tends to $0$ through positive values. In extremal statements, the number of vertices tends to infinity. In the spectral construction, once the prime $p$ has been fixed, all asymptotics involving the LPS graphs are taken as $j\to\infty$, equivalently as $q_j\to\infty$ and $n_j=|\PSL_2(\F_{q_j})|\to\infty$. The parameters $D=p+1$, $\theta$, $k$, and the recursive parameter $m$ are then regarded as fixed. Constants implicit in $O(\cdot)$, $o(\cdot)$, and $\Theta(\cdot)$ may depend on fixed parameters, but not on the variable tending to infinity. Limits in small positive parameters, such as $\alpha\to0^+$ or $\eta\to0^+$, are always stated explicitly. 

\section{Preliminaries}
\label{sec:preliminaries}
\subsection{Lagrangian}
For a finite $r$-graph $G$, write $\varrho(G)\coloneqq |G|/\binom{v(G)}{r}$ for its \emph{edge density}.

A \emph{blow-up} of a finite $r$-graph $G$ is obtained by replacing each vertex $v\in V(G)$ by a non-empty vertex class $V_v$, and replacing each edge $\{v_1,\ldots,v_r\}\in G$ by all $r$-sets with one vertex in each of $V_{v_1},\ldots,V_{v_r}$.

For an integer $n\ge 0$, the standard \emph{$n$-dimensional simplex} is 
\[
        \Delta_n \coloneqq \left\{ \mathbf{x}=(x_1,\ldots,x_{n+1})\in\R_{\ge 0}^{n+1}\colon x_1 + \cdots + x_{n+1} = 1 \right\}
\]
More generally, for a finite set $V$, write
\[
        \Delta_V\coloneqq \left\{\mathbf{y}\in\R_{\ge 0}^{V}\colon\ \sum_{v\in V}y_v=1\right\}.
\]
The \emph{Lagrangian polynomial} of a finite $r$-graph $G$ is
\[
        P_G(\mathbf{y})\coloneqq r!\sum_{e\in G}\prod_{v\in e}y_v \qquad\text{for all } \mathbf{y}\in\R^{V(G)}.
\]
Its \emph{Lagrangian} is
\[
        \lambda(G)\coloneqq \max\{P_G(\mathbf{y})\colon\ \mathbf{y}\in\Delta_{V(G)}\}.
\]

The set of Lagrangians of finite $r$-graphs is defined as 
\[
        \Lambda^{(r)} \coloneqq \{\lambda(G)\colon G\text{ is a finite }r\text{-graph}\}.
\]

\begin{theorem}[{\cite[Corollary~4 and Theorem~24]{Pikhurko2014}}]\label{thm:pikhurko-lagrangian-densities} For every integer $r\ge 2$, we have $\Lambda^{(r)}\subseteq \Pi^{(r)}_{\fin}$. Moreover, for every integer $r\ge 3$, the set $\Lambda^{(r)}$ is dense in $\Pi^{(r)}_\infty$, that is, $\overline{\Lambda^{(r)}} = \Pi^{(r)}_{\infty}$. 
\end{theorem}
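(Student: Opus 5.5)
The plan for Theorem~\ref{thm:pikhurko-lagrangian-densities} has two parts. First we show $\Lambda^{(r)}\subseteq\Pi^{(r)}_{\fin}$, then that $\Lambda^{(r)}$ is dense in $\Pi^{(r)}_\infty$ for $r\ge3$.

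\textbf{Inclusion.} Fix a finite $r$-graph $G$ with $\lambda(G)=\lambda$. We want a finite family $\mathcal F_G$ whose extremal configurations are exactly the blow-ups of $G$. Let $k$ be a large integer depending on $G$. Take $\mathcal F_G$ to be the set of all $r$-graphs on at most $k$ vertices that are not contained in any blow-up of $G$.
\begin{itemize}
\item Lower bound. Blow-ups of $G$ are $\mathcal F_G$-free. Their densities approach $\lambda$ when the part sizes follow an optimal $\mathbf y\in\Delta_{V(G)}$. Hence $\pi(\mathcal F_G)\ge\lambda$.
\item Upper bound. An $\mathcal F_G$-free $r$-graph $H$ has every $k$-vertex subgraph embeddable in a blow-up of $G$. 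We must show $|H|\le(\lambda+o(1))\binom nr$.
\end{itemize}
For the upper bound we use the hypergraph removal lemma and compactness, in the form of hypergraph limits. We pass to a limit object $W$ of a sequence of $\mathcal F_G$-free graphs with density tending to $\pi(\mathcal F_G)$. All finite induced samples of $W$ lie in blow-ups of $G$ up to size $k$.

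\textbf{Main obstacle.} The hard step is to show that $W$ itself is (a.e.) a blow-up of $G$. For this we would let $k\to\infty$ through a suitable family $\mathcal F_G^\infty$ and use a stability/symmetrization argument in the style of Zykov. The steps are as follows.
\begin{itemize}
\item Among $\mathcal F_G$-free graphs of maximum size, vertices with non-adjacent links can be cloned.
\item After cloning, the graph becomes a blow-up of a bounded-size pattern.
\item That pattern must embed in $G$, so its Lagrangian is at most $\lambda(G)$.
\end{itemize}
Because finiteness of $\mathcal F_G$ matters, we need a finite threshold $k$ that suffices. We get it from the fact that only $v(G)$ distinct link types can appear, so $k$ of order $v(G)+r$ detects any violation. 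This step is delicate because non-edges inside parts must be handled. We would instead let the pattern be a possibly smaller subgraph of $G$ and allow edges to be deleted, which keeps the density bound at most $\lambda$.

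\textbf{Density.} Let $\pi=\pi(\mathcal F)\in\Pi^{(r)}_\infty$. Take an almost-extremal $\mathcal F$-free $H_n$ with density close to $\pi$. The reduction from $\Pi^{(r)}_\infty$ to $\Pi^{(r)}_{\fin}$ closure is assumed as in the paper.
\begin{itemize}
\item $\lambda(H_n)\ge\varrho(H_n)-o(1)$ via the uniform weighting.
\item We need $\lambda(H_n)$ not much larger than $\pi$. Blow-ups of $H_n$ need not be $\mathcal F$-free, so we instead choose $H_n$ as a suitable subgraph.
\item Such a subgraph exists by an averaging argument: the density of blow-ups of an $\mathcal F$-free graph equals its Lagrangian, so we aim for $H$ whose blow-ups stay nearly $\mathcal F$-free.
\end{itemize}
The simplest route is to replace $\mathcal F$ by its closure under homomorphic images, via supersaturation, which does not change $\pi$. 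Then $\mathcal F$-free graphs are closed under blow-up, so $\lambda(H)\le\pi$. This gives the Lagrangians $\lambda(H_n)\to\pi$.
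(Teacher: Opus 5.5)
The paper gives no proof of this statement; it quotes it from Pikhurko~\cite{Pikhurko2014}, so your sketch has to stand on its own. Your density half is essentially sound. For finite $\mathcal F$, supersaturation shows that many copies of a homomorphic image $F'$ of some $F\in\mathcal F$ force a blow-up of $F'$, which contains $F$. Hence $\pi(\mathcal F^{\mathrm{hom}})=\pi(\mathcal F)$. Since $\mathcal F^{\mathrm{hom}}$-free graphs are closed under blow-ups, every such $H$ has $\lambda(H)\le\pi(\mathcal F)$, and near-extremal ones have $\lambda\approx\varrho\approx\pi(\mathcal F)$. Together with Brown--Simonovits this gives $\Pi^{(r)}_\infty\subseteq\overline{\Lambda^{(r)}}$. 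For the reverse inclusion you still need that $\Pi^{(r)}_\infty$ is closed; this can be quoted, but you never mention it.

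The genuine gap is in $\Lambda^{(r)}\subseteq\Pi^{(r)}_{\fin}$, exactly at the step you call the main obstacle. Letting $k\to\infty$ only gives the trivial fact $\lambda(G)\in\Pi^{(r)}_\infty$. Your finite threshold (``only $v(G)$ link types can appear, so $k$ of order $v(G)+r$'') is circular: it presupposes that the host already looks like a blow-up of $G$. Nothing in sets of $O(v(G)+r)$ vertices bounds the number of classes of your symmetrized pattern. It is also not clear that Zykov-type cloning in $r$-graphs ends in a blow-up of a bounded pattern. The missing idea is the pair-covering property, and it makes the removal lemma and limit objects unnecessary.

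The argument runs as follows. First, the property that every subgraph on at most $k$ vertices maps homomorphically to $G$ is preserved by cloning vertices. Hence $\pi(\mathcal F_G)=\sup\{\lambda(H)\colon H \text{ is } \mathcal F_G\text{-free}\}$. Second, by the Frankl--R\"odl lemma, an optimal weighting of $H$ with minimal support $S$ has every pair of $S$ covered by an edge of $H[S]$. Any homomorphism from such a graph to $G$ is therefore injective. Now suppose $|S|>v(G)$. Take $v(G)+1$ vertices of $S$ together with one covering edge for each of their pairs. These span at most $v(G)+1+(r-2)\binom{v(G)+1}{2}$ vertices, so their induced subgraph must map to $G$ once $k$ is at least this number. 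Yet the chosen $v(G)+1$ vertices would need distinct images, a contradiction. Thus for such $k$ we get $|S|\le v(G)\le k$, so $H[S]$ maps to $G$ and $\lambda(H)=\lambda(H[S])\le\lambda(G)$. This is precisely what bounds the size of the ``pattern'' and lets a finite $k$ work; without it your sketch does not close.
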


We also recall the related classical result of Brown and Simonovits~\cite{BrownSimonovits1984}: the Tur\'an density of an infinite forbidden family is the limit of the Tur\'an densities of its finite subfamilies. In particular, $\Pi^{(r)}_\infty\subseteq\overline{\Pi^{(r)}_{\fin}}$.

We shall use the following consequence of Pikhurko's limit characterization~\cite[Theorem 24]{Pikhurko2014}.

\begin{theorem}[{\cite[Theorem~24]{Pikhurko2014}}]\label{thm:tight} Let $r\ge 3$ and $x\in[0,1]$. Then $x\in \Pi^{(r)}_\infty$ if and only if there is a sequence of $r$-graphs $G_n$, with $v(G_n)\to\infty$, such that $\varrho(G_n)\to x$ and $\lambda(G_n)\to x$. \end{theorem}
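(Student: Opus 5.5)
The plan is to derive both directions from Theorem~\ref{thm:pikhurko-lagrangian-densities}, namely $\Lambda^{(r)}\subseteq\Pi^{(r)}_{\fin}$ and $\overline{\Lambda^{(r)}}=\Pi^{(r)}_\infty$. The second identity also shows that $\Pi^{(r)}_\infty$ is closed, as recalled in the introduction. The only extra ingredient is a standard fact about blow-ups and Lagrangians.

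For the direction ``$\Leftarrow$'', suppose $G_n$ satisfies $\lambda(G_n)\to x$. Each $\lambda(G_n)$ lies in $\Lambda^{(r)}\subseteq\Pi^{(r)}_{\fin}\subseteq\Pi^{(r)}_\infty$. Since $\Pi^{(r)}_\infty$ is closed, the limit $x$ also lies in $\Pi^{(r)}_\infty$. This direction does not even use the hypotheses on $\varrho(G_n)$ or $v(G_n)$.

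For the direction ``$\Rightarrow$'', let $x\in\Pi^{(r)}_\infty=\overline{\Lambda^{(r)}}$, and pick finite $r$-graphs $H_k$ with $\lambda(H_k)\to x$. Fix $k$ and an optimal weighting $\mathbf{y}\in\Delta_{V(H_k)}$, so $P_{H_k}(\mathbf{y})=\lambda(H_k)$. For large $N$, let $B_{k,N}$ be the blow-up of $H_k$ in which class $V_v$ has size $\lceil y_vN\rceil$, or size $1$ if $y_v=0$ (blow-ups require non-empty classes). I then verify two facts.
\begin{enumerate}
\item $\varrho(B_{k,N})\to\lambda(H_k)$ as $N\to\infty$. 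Counting transversal $r$-sets gives $|B_{k,N}|=\sum_{e\in H_k}\prod_{v\in e}|V_v|=N^r\lambda(H_k)/r!+O(N^{r-1})$. Dividing by $\binom{v(B_{k,N})}{r}\sim N^r/r!$ gives the claim.
\item $\lambda(B_{k,N})=\lambda(H_k)$. Vertices in a common class $V_v$ span no edge together and have identical links. So, given any weighting of $B_{k,N}$, setting $z_v=\sum_{u\in V_v}w_u$ gives $P_{B_{k,N}}(\mathbf{w})=P_{H_k}(\mathbf{z})$, because $P_{B_{k,N}}$ is multilinear in the classes and each edge meets each class at most once. Hence $\lambda(B_{k,N})\le\lambda(H_k)$. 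Conversely, placing all weight $z_v$ on one vertex of $V_v$ shows $\lambda(B_{k,N})\ge\lambda(H_k)$.
\end{enumerate}

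Finally, I diagonalise. For each $n$ I choose $k=n$ and $N=N_n$ so large that $v(B_{n,N_n})\ge n$ and $|\varrho(B_{n,N_n})-\lambda(H_n)|<1/n$, and set $G_n\coloneqq B_{n,N_n}$. Then $v(G_n)\to\infty$, $\lambda(G_n)=\lambda(H_n)\to x$, and $\varrho(G_n)\to x$.

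There is no serious obstacle once Theorem~\ref{thm:pikhurko-lagrangian-densities} is available. The only point needing care is the aggregation identity showing that blowing up does not change the Lagrangian. The genuinely hard content, that $\Pi^{(r)}_\infty$ is the closure of $\Lambda^{(r)}$ (via supersaturation and a Brown--Simonovits type limit argument), is imported from the cited theorem.
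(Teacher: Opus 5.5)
Your argument is correct. Note that the paper gives no proof of this statement: it is imported directly from \cite[Theorem~24]{Pikhurko2014}.

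Your route is different. You derive the statement from Theorem~\ref{thm:pikhurko-lagrangian-densities} together with the elementary fact that a blow-up of $H$ with class sizes proportional to an optimal weighting has two properties: its edge density tends to $\lambda(H)$, and its Lagrangian is exactly $\lambda(H)$. Your verification of both properties is fine, including the size-$1$ classes for zero weights and the exact identity $P_{B}(\mathbf w)=P_{H}(\mathbf z)$ obtained by aggregating weights over classes. The diagonalisation is also fine.

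What this buys is a clarification of the statement itself. The backward direction needs only $\lambda(G_n)\to x$. The condition $\varrho(G_n)\to x$ in the forward direction comes for free by blowing up.

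The price is that this is a reformulation rather than an independent proof. The paper attributes Theorem~\ref{thm:pikhurko-lagrangian-densities} to Corollary~4 and Theorem~24 of the same source, and the identity $\overline{\Lambda^{(r)}}=\Pi^{(r)}_\infty$ draws on that same Theorem~24. So what you have really shown is that the two cited formulations are equivalent, given the blow-up lemma. All of the substance is imported, as you acknowledge: $\Lambda^{(r)}\subseteq\Pi^{(r)}_\infty$, and above all the inclusion $\Pi^{(r)}_\infty\subseteq\overline{\Lambda^{(r)}}$. The latter genuinely requires a supersaturation-type argument, for instance replacing $\mathcal F$ by a family closed under blow-ups with the same Tur\'an density, so that near-extremal graphs have Lagrangian close to the Tur\'an density. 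Within this paper, where Theorem~\ref{thm:pikhurko-lagrangian-densities} is taken as given, your derivation is perfectly legitimate.
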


\subsection{Ramanujan graphs and Cayley graphs}

Even though some of the definitions and facts that we state and discuss are standard,
we decided to provide details for readers who may be unfamiliar with them.

Let us recall the definitions of some standard matrix groups over the field $\F_q$ for an odd prime power $q$:

\[
        \begin{aligned}
        \operatorname{GL}_2(\F_q)&\coloneqq \{A\in\F_q^{2\times 2}\colon\det A\ne 0\},&
        \PGL_2(\F_q)&\coloneqq \operatorname{GL}_2(\F_q)/\F_q^\times I,\\
        \operatorname{SL}_2(\F_q)&\coloneqq \{A\in\F_q^{2\times 2}\colon\det A=1\},&
        \PSL_2(\F_q)&\coloneqq \operatorname{SL}_2(\F_q)/\{\pm I\}.
        \end{aligned}
\]
Here (and later) $I$ denotes the identity matrix of the appropriate dimension and $\F_q^\times$ denotes the multiplicative group on the non-zero elements of~$\F_q$. Thus, for example,  $\F_q^\times I=\{tI\colon t\in\F_q^\times\}$ is the group of non-zero scalar multiples of the identity matrix. The groups $\PGL_2(\F_q)$ and $\PSL_2(\F_q)$ are called the \emph{projective general linear group} and the \emph{projective special linear group}, respectively. Recall that $|\PSL_2(\F_q)|=q(q^2-1)/2$.

If $\Gamma$ is a finite group and $S=S^{-1}\subseteq\Gamma\setminus\{1_\Gamma\}$, then the \emph{Cayley graph} $\Cay(\Gamma,S)$ is the graph with vertex set $\Gamma$ in which $g,h\in \Gamma$ are adjacent if and only if $h=gs$ for some $s\in S$.

For an odd prime $q$ and an integer $a$, the \emph{Legendre symbol} $\left(\frac{a}{q}\right)$ is defined by
\[
        \left(\frac{a}{q}\right)
        =
        \begin{cases}
        0, & q\mid a,\\
        1, & q\nmid a \text{ and there is an } x\in\mathbb F_q^\times
        \text{ with } x^2\equiv a\pmod q,\\
        -1, & q\nmid a \text{ and no such } x \text{ exists}.
        \end{cases}
\]
Thus, for $q\nmid a$, the condition $\left(\frac{a}{q}\right)=1$ means that $a$ is a non-zero square modulo $q$, while $\left(\frac{a}{q}\right)=-1$ means that $a$ is a non-square modulo $q$.

For a finite graph $G$, its \emph{adjacency matrix} $A_G$ is the matrix whose $(u,v)$-entry is $1$ if $u$ and $v$ are adjacent, and is $0$ otherwise. A  $D$-regular graph $G$ is called \emph{Ramanujan} if every eigenvalue of $A_G$ other than the trivial eigenvalue $D$ has absolute value at most $2\sqrt{D-1}$.
Recall that the bound $2\sqrt{D-1}$ is asymptotically best possible for $D$-regular graphs by the Alon--Boppana theorem; see, for example, Alon~\cite{Alon1986} and Nilli's sharpened form~\cite{Nilli1991}.

The seminal construction of Lubotzky, Phillips and Sarnak (the LPS construction)~\cite{LPS1988} gives Ramanujan graphs. Related Ramanujan graph constructions were independently obtained by Margulis~\cite{Margulis1988}. We will use the LPS construction, which defines the graphs
via quaternions as follows. Fix a prime $p\equiv1\pmod4$ and choose primes $q\equiv1\pmod4$, $q\neq p$, for which $p$ is a quadratic residue modulo $q$, i.e., $\left(\frac{p}{q}\right) = 1$. The $p+1$ generators come from the integral representations
\[
        p=a_0^2+a_1^2+a_2^2+a_3^2, \qquad a_0>0,\quad a_0\text{ odd},\quad\text{and}\quad a_1,a_2,a_3\text{ even}.
\]
After choosing $\iota\in\mathbb F_q$ with $\iota^2=-1$, each such quadruple gives the projective class of
\[
        \begin{pmatrix}
        a_0+\iota a_1 & a_2+\iota a_3\\
        -a_2+\iota a_3 & a_0-\iota a_1
        \end{pmatrix}.
\]
Since its determinant is $p$, this class lies in $\PSL_2(\mathbb F_q)$ when $p$ is a square modulo $q$ and thus a scalar multiple of the matrix has determinant $1$. Taking these projective classes as the generating set gives the Cayley graph used below.

More generally, for any distinct primes $p,q\equiv1\pmod4$, the LPS construction gives a $(p+1)$-regular Ramanujan graph $X^{p,q}$. The Legendre symbol $\left(\frac{p}{q}\right)$ determines the ambient projective group. If $\left(\frac{p}{q}\right)=1$, then the generators have square determinant and lie in $\PSL_2(\mathbb F_q)$, so $X^{p,q}$ is a Cayley graph on $\PSL_2(\mathbb F_q)$; this is the non-bipartite case. If $\left(\frac{p}{q}\right)=-1$, then the corresponding Cayley graph is on $\PGL_2(\mathbb F_q)$ and is bipartite. Both graphs are Ramanujan (in the latter case using the bipartite version of the property). We use only the first case.

\begin{theorem}[\cite{LPS1988}]\label{thm:LPSOneQ} Let $p<q$ be primes congruent to $1$ modulo $4$ with $\left(\frac{p}{q}\right)=1$, and put $\Gamma=\PSL_2(\mathbb F_q)$. Then there is a symmetric generating set $S\subseteq \Gamma$ with $|S|=p+1$ such that the Cayley graph $\Cay(\Gamma,S)$ is Ramanujan. Consequently, this graph is connected and non-bipartite.
\end{theorem}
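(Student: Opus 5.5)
The statement to justify is Theorem~\ref{thm:LPSOneQ}. Its first part is the Lubotzky--Phillips--Sarnak theorem, which we quote. The only thing to prove is the ``consequently'' clause: a Ramanujan Cayley graph $\Cay(\Gamma,S)$ as above is connected and non-bipartite. The plan is to read both properties off the spectrum of the $D$-regular graph, where $D=p+1$, using standard facts about adjacency matrices of regular graphs.

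\textbf{Step 1 (the Ramanujan bound is strictly below $D$).} Since $p\ge5$, we have $4(D-1)=4p<(p+1)^2=D^2$. Indeed, $(p+1)^2-4p=(p-1)^2>0$. Hence $2\sqrt{D-1}<D$.

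\textbf{Step 2 (connectivity).} For a $D$-regular graph, the multiplicity of the eigenvalue $D$ in $A_G$ equals the number of connected components. The indicator vector of each component is an eigenvector with eigenvalue $D$. Conversely, if $A_Gx=Dx$, look at a vertex where $|x_v|$ is maximal; the eigen-equation forces $x$ to be constant on each component.
\begin{itemize}
\item By definition, every eigenvalue other than the trivial one has absolute value at most $2\sqrt{D-1}$.
\item By Step~1, no such eigenvalue can equal $D$.
\item So $D$ is a simple eigenvalue, and the graph is connected.
\end{itemize}

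\textbf{Step 3 (non-bipartiteness).} A $D$-regular bipartite graph with parts $X,Y$ has $-D$ as an eigenvalue, with eigenvector $\mathbf 1_X-\mathbf 1_Y$. Since $|-D|=D>2\sqrt{D-1}$, this would contradict the Ramanujan property. Hence the graph is not bipartite.

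\textbf{Caveat.} The main point needing care is what ``Ramanujan'' means here. The paper's definition excludes only the single trivial eigenvalue $D$; it does not also exclude $\pm D$. So one must check that the bipartite eigenvalue $-D$ is not treated as trivial in the non-bipartite $\PSL_2$ case. That is exactly the content of LPS's result in the case $\left(\frac{p}{q}\right)=1$, so we take it from~\cite{LPS1988}. The fact that the generators actually lie in $\PSL_2(\F_q)$ and are $p+1$ distinct, mutually inverse-closed elements is also part of the cited construction. It requires $q>2\sqrt p$, which holds because $q>p$.
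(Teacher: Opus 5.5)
Your proposal is correct and follows the route the paper implicitly takes: the existence of the Ramanujan Cayley graph is quoted from \cite{LPS1988}, and the ``consequently'' clause follows, as you argue, from $2\sqrt{D-1}<D$, which forces the eigenvalue $D$ to be simple and rules out $-D$. Your caveat is also apt. Under the common convention that discards every eigenvalue of absolute value $D$, connectivity and non-bipartiteness are genuine extra inputs from the LPS analysis of the case $\left(\frac{p}{q}\right)=1$, and the paper's single-trivial-eigenvalue definition builds them into the cited statement.
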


The next lemma records the number-theoretic input needed to find an infinite sequence of suitable primes~$q$. The extra condition $q_{j+1}/q_j\to1$ will later let the group sizes vary smoothly.

\begin{lemma}\label{lem:choice-qj}
Let $p\equiv 1 \pmod 4$ be a prime. Then there is an increasing sequence of primes $q_1<q_2<q_3<\cdots$ such that for $j \ge 1$,
\[
        q_j\equiv 1 \pmod 4,\qquad \left(\frac{p}{q_j}\right)=1,\qquad q_j\to\infty,\qquad\text{and}\qquad \frac{q_{j+1}}{q_j}\to 1.
\]
In particular, each pair $(p,q_j)$ satisfies the hypotheses for the $\PSL_2(\mathbb F_{q_j})$ case of the LPS construction.
\end{lemma}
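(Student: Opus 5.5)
The plan is to turn the two congruence-type conditions into a single arithmetic progression, apply the prime number theorem for arithmetic progressions, and then extract a sequence with ratios tending to $1$.

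\emph{Step 1: reduce to a congruence.} Since $p\equiv1\pmod 4$, quadratic reciprocity gives, for every odd prime $q\neq p$,
\[
\left(\frac{p}{q}\right)=\left(\frac{q}{p}\right).
\]
Hence the two conditions $q\equiv1\pmod4$ and $\left(\frac{p}{q}\right)=1$ hold for every prime $q$ with $q\equiv 1\pmod{4p}$. Indeed, such a $q$ satisfies $q\equiv1\pmod p$, so $\left(\frac{q}{p}\right)=\left(\frac{1}{p}\right)=1$. It therefore suffices to produce primes in the single residue class $1\bmod 4p$, which is coprime to the modulus $4p$.

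\emph{Step 2: primes in the progression are dense on a multiplicative scale.} Let $\pi(x;4p,1)$ be the number of primes $q\le x$ with $q\equiv1\pmod{4p}$. By the prime number theorem for arithmetic progressions (de la Vallée Poussin),
\[
\pi(x;4p,1)\sim \frac{x}{\varphi(4p)\log x}.
\]
Fix $\eps>0$. Then
\[
\pi((1+\eps)x;4p,1)-\pi(x;4p,1)\sim \frac{\eps x}{\varphi(4p)\log x}\to\infty,
\]
so for all sufficiently large $x$ the interval $(x,(1+\eps)x]$ contains such a prime.

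\emph{Step 3: build the sequence.} Enumerate all primes $q\equiv 1\pmod{4p}$ with $q>p$ in increasing order as $q_1<q_2<\cdots$. There are infinitely many by Dirichlet's theorem, so $q_j\to\infty$, and each $q_j$ satisfies $p<q_j$ and both required conditions by Step 1.

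For the ratio condition, fix $\eps>0$. For large $j$, apply Step 2 with $x=q_j$: there is a prime of the progression in $(q_j,(1+\eps)q_j]$. Since $q_{j+1}$ is the next prime of the progression after $q_j$, we get $q_{j+1}\le(1+\eps)q_j$. Because $\eps$ was arbitrary, $q_{j+1}/q_j\to1$.

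The final sentence of the lemma, that each pair $(p,q_j)$ satisfies the hypotheses of Theorem~\ref{thm:LPSOneQ}, follows since $p<q_j$, both primes are $\equiv1\pmod4$, and $\left(\frac{p}{q_j}\right)=1$.

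There is no genuine obstacle here. The only substantive input is the asymptotic form of the prime number theorem in progressions; Dirichlet's theorem alone is not enough for the ratio condition. If one wanted explicit constants, as hinted in the introduction, one would cite effective bounds such as~\cite{BennettMartinOBryantRechnitzer2018} in Step 2 instead.
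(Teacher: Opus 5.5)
Your proposal is correct and follows essentially the same route as the paper: reduce to primes $q\equiv1\pmod{4p}$ via quadratic reciprocity, use the prime number theorem in arithmetic progressions to find such a prime in $(x,(1+\eps)x]$ for large $x$, and take consecutive primes of the progression. One small remark: your restriction to $q>p$ is automatic, since any such $q$ satisfies $q\ge 4p+1$.
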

\begin{proof}
Let $q$ be a prime satisfying $q\equiv 1 \pmod {4p}$. Then $q\ne p$, since $q\equiv1\pmod p$. Clearly $q\equiv 1\pmod 4$. Also $q\equiv 1\pmod p$, and therefore $\left(\frac{q}{p}\right)=\left(\frac{1}{p}\right)=1$. By quadratic reciprocity,
\[
        \left(\frac{p}{q}\right)\left(\frac{q}{p}\right) = (-1)^{\frac{p-1}{2}\frac{q-1}{2}}.
\]
Since $p\equiv 1\pmod 4$, the integer $(p-1)/2$ is even. Hence the sign on the right-hand side is $+1$, and so
\[
        \left(\frac{p}{q}\right)=\left(\frac{q}{p}\right)=1.
\]
Thus $p$ is a quadratic residue modulo every prime $q\equiv 1\pmod {4p}$.

It remains to choose such primes with consecutive ratio tending to $1$. By the prime number theorem in arithmetic progressions due to de la Vall\'ee Poussin, see e.g.~\cite[Chapter~20]{Davenport2000},
\[
        \pi(x;4p,1) \coloneqq \#\{q\le x\colon q\text{ prime and }q\equiv 1\pmod {4p}\} \sim \frac{\operatorname{Li}(x)}{\varphi(4p)}.
\]
Here $\varphi$ denotes Euler's totient function, so $\varphi(4p)=\#\{1\le a\le 4p\colon\gcd(a,4p)=1\}$, and $\operatorname{Li}(x)\coloneqq \int_2^x \frac{dt}{\log t}$ is the logarithmic integral. In particular, there are infinitely many primes in this progression. Let $q_1<q_2<q_3<\cdots$ be their increasing enumeration.

Fix $\eta>0$. The same asymptotic gives
\[
        \pi((1+\eta)x;4p,1)-\pi(x;4p,1) \sim \frac{\operatorname{Li}((1+\eta)x)-\operatorname{Li}(x)} {\varphi(4p)} \sim \frac{\eta x}{\varphi(4p)\log x}.
\]
This tends to infinity as $x\to\infty$. Hence, for all sufficiently large $x$, the interval $(x,(1+\eta)x]$ contains a prime congruent to $1\pmod {4p}$. Taking $x=q_j$ for sufficiently large $j$, we get $q_{j+1}\le (1+\eta)q_j$. Since always $q_{j+1}>q_j$, it follows that
\[
        1\le \liminf_{j\to\infty}\frac{q_{j+1}}{q_j} \le \limsup_{j\to\infty}\frac{q_{j+1}}{q_j} \le 1+\eta.
\]
As $\eta>0$ was arbitrary, we have ${q_{j+1}}/{q_j}\to 1$. Thus each pair $(p,q_j)$ consists of distinct primes with $p\equiv q_j\equiv1\pmod4$ and $\left(\frac{p}{q_j}\right)=1$, which are exactly the hypotheses needed for the $\PSL_2(\mathbb F_{q_j})$ case of the LPS construction.
\end{proof}

Let us state the properties that we will use in our proofs and that directly follow from Lemma~\ref{lem:choice-qj} and Theorem~\ref{thm:LPSOneQ}.

\begin{theorem}\label{thm:LPS} Let $p\equiv 1\pmod 4$ be a prime and put $D \coloneqq p+1$. Then there is an increasing sequence $(q_j)_{j\ge 1}$ of primes and a sequence $(S_j)_{j\ge 1}$ of sets such that  $q_j\to\infty$ and $q_{j+1}/q_j\to 1$ as $j\to\infty$ and, for every $j\ge 1$, it holds that
$q_j\equiv 1\pmod 4$, $\left(\frac{p}{q_j}\right)=1$, $S_j$ is a  symmetric generating set in $\Gamma_j\coloneqq \PSL_2(\mathbb F_{q_j})$, $|S_j|=D$, and the $D$-regular Cayley graph $\Cay(\Gamma_j,S_j)$ is Ramanujan.
\end{theorem}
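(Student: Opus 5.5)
This is essentially a bookkeeping combination of Lemma~\ref{lem:choice-qj} and Theorem~\ref{thm:LPSOneQ}. Fix the prime $p\equiv1\pmod4$ and set $D=p+1$. First apply Lemma~\ref{lem:choice-qj} to obtain an increasing sequence of primes $q_1<q_2<\cdots$ such that $q_j\equiv1\pmod 4$, $\left(\frac{p}{q_j}\right)=1$, $q_j\to\infty$ and $q_{j+1}/q_j\to1$. This already gives every arithmetic requirement in the statement.

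Next, put $\Gamma_j=\PSL_2(\F_{q_j})$. Theorem~\ref{thm:LPSOneQ} needs $p<q_j$, and not just $q_j\ne p$. All the $q_j$ produced in the proof of Lemma~\ref{lem:choice-qj} satisfy $q_j\equiv1\pmod{4p}$, so $q_j\ge 4p+1>p$ automatically. In any case, we may discard finitely many initial terms of the sequence and re-index. This operation preserves monotonicity, the congruence and Legendre-symbol conditions, $q_j\to\infty$, and the ratio limit.

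For each $j$, Theorem~\ref{thm:LPSOneQ} applied to the pair $(p,q_j)$ then provides a symmetric set $S_j\subseteq\Gamma_j$ with $|S_j|=p+1=D$ such that $\Cay(\Gamma_j,S_j)$ is Ramanujan, and hence $D$-regular and connected. It remains to see that $S_j$ generates $\Gamma_j$. Connectivity of the Cayley graph means that every $g\in\Gamma_j$ is joined to $1_{\Gamma_j}$ by a path. Such a path writes $g$ as a product of elements of $S_j$, so $\langle S_j\rangle=\Gamma_j$.

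Finally, the definition of a Cayley graph in the paper requires $1_\Gamma\notin S$. This is part of the LPS construction: the generators have norm $p$, so they are not scalar, and hence not trivial in $\PSL_2$. We take this as included in Theorem~\ref{thm:LPSOneQ}. There is no real obstacle in this argument. The only points needing care are the inequality $p<q_j$ and deducing generation from connectivity, both handled above.
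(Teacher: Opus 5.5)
Your proposal is correct and matches the paper's argument: the paper simply notes that Theorem~\ref{thm:LPS} follows directly from Lemma~\ref{lem:choice-qj} (whose primes satisfy $q_j\equiv 1\pmod{4p}$, hence $q_j>p$) together with Theorem~\ref{thm:LPSOneQ}, and your extra checks are harmless, with generation already part of the conclusion of Theorem~\ref{thm:LPSOneQ}. One small quibble in your aside: determinant $p$ alone does not rule out a generator being scalar modulo $q$, since $p$ is a square there. The actual reason is that $(a_1,a_2,a_3)\neq 0$ (because $p$ is not a perfect square) and $|a_i|<q$. Since you defer this point to the cited theorem, it does not affect the proof.
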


For functions $f,g$ on a finite set $U$, we write
\[
        \langle f,g\rangle_U\coloneqq \sum_{u\in U}f(u)g(u), \qquad \|f\|_{2,U}\coloneqq \sqrt{\langle f,f\rangle_U}.
\]
When $U$ is clear from context, we omit it from the notation.

If $\Gamma$ is a finite group and $S=S^{-1}\subseteq\Gamma\setminus\{1_\Gamma\}$, the adjacency matrix $M=M_{\Gamma,S}$ of $\Cay(\Gamma,S)$ acts on functions $h\colon\Gamma\to\R$ by
\[
        (Mh)(u)\coloneqq \sum_{s\in S}h(us) \qquad\text{for all } h\colon\Gamma\to\R\text{ and }u\in\Gamma.
\]
The following is the standard expander mixing lemma in a functional form; see, for example, \cite{Alon1986,Lubotzky2012}. For completeness, we include the short proof.

\begin{lemma}\label{lem:mixing}
Let $\Gamma$ be a finite group, let $S=S^{-1}\subseteq\Gamma\setminus\{1_\Gamma\}$, and put $D\coloneqq |S|$ and $n\coloneqq |\Gamma|$. Suppose that $\Cay(\Gamma,S)$ is connected and non-bipartite, and that every non-trivial adjacency eigenvalue has absolute value at most $\theta$. Let $M=M_{\Gamma,S}$. 
Then every function $h\colon\Gamma\to\R$ with $\sum_{u\in\Gamma}h(u)=0$ satisfies
\begin{equation}
        \|Mh\|_2\le \theta\|h\|_2 .
        \label{eq:spectral-L2-bound}
\end{equation}
Moreover, for any functions $f,g\colon\Gamma\to\R$,
\begin{equation}
        \langle f,Mg\rangle \le Dn\,\bar f\,\bar g+\theta\|f^\circ\|_2\|g^\circ\|_2,
        \label{eq:mixing}
\end{equation}
where $\mathbf 1$ denotes the constant-one function on $\Gamma$, and for any function $h\colon\Gamma\to\R$ we write
\[
        \bar h\coloneqq \frac1n\sum_{u\in\Gamma}h(u), \qquad h^\circ\coloneqq h-\bar h\,\mathbf 1 .
\]
\end{lemma}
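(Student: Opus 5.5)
The argument is the usual spectral decomposition of $M$. First we check that $M$ is a symmetric real matrix. Its $(u,v)$-entry is $1$ exactly when $v=us$ for some $s\in S$, and $S=S^{-1}$ gives $u=vs^{-1}$. Hence $\R^\Gamma$ has an orthonormal eigenbasis for $M$. The constant function $\mathbf 1$ satisfies $M\mathbf 1=D\mathbf 1$. Since $\Cay(\Gamma,S)$ is connected, the eigenvalue $D$ has multiplicity one. We interpret the hypothesis as saying that every eigenvalue belonging to an eigenvector orthogonal to $\mathbf 1$ has absolute value at most $\theta$. Non-bipartiteness rules out $-D$ as an eigenvalue, so this is consistent, for instance, with the Ramanujan bound $\theta=2\sqrt{D-1}$ from Theorem~\ref{thm:LPS}. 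Thus the orthogonal complement $\mathbf 1^\perp=\{h\colon\sum_u h(u)=0\}$ is $M$-invariant and spanned by eigenvectors $\phi_i$ with eigenvalues $\mu_i$, $|\mu_i|\le\theta$.

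For \eqref{eq:spectral-L2-bound}, write $h=\sum_i c_i\phi_i$ with $h\in\mathbf 1^\perp$. Then
\[
\|Mh\|_2^2=\sum_i\mu_i^2c_i^2\le\theta^2\sum_i c_i^2=\theta^2\|h\|_2^2 .
\]

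For \eqref{eq:mixing}, decompose $f=\bar f\mathbf 1+f^\circ$ and $g=\bar g\mathbf 1+g^\circ$, where $f^\circ,g^\circ\in\mathbf 1^\perp$, and expand $\langle f,Mg\rangle$ bilinearly into four terms.
\begin{itemize}
\item The main term is $\bar f\bar g\langle\mathbf 1,M\mathbf 1\rangle=\bar f\bar g\,D\langle\mathbf 1,\mathbf 1\rangle=Dn\bar f\bar g$.
\item The term $\bar f\langle \mathbf 1,Mg^\circ\rangle$ vanishes, because $Mg^\circ\in\mathbf 1^\perp$ by invariance.
\item Symmetrically, $\bar g\langle f^\circ,M\mathbf 1\rangle=D\bar g\langle f^\circ,\mathbf 1\rangle=0$.
\item The remaining term is bounded using Cauchy--Schwarz and \eqref{eq:spectral-L2-bound}:
\[
\langle f^\circ,Mg^\circ\rangle\le\|f^\circ\|_2\|Mg^\circ\|_2\le\theta\|f^\circ\|_2\|g^\circ\|_2 .
\]
\end{itemize}
Adding the four terms gives \eqref{eq:mixing}.

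No step here is genuinely hard. The only point that needs care is the invariance of $\mathbf 1^\perp$ under $M$, which depends on $M$ being symmetric. Without symmetry the cross terms would not vanish, which is exactly why $S=S^{-1}$ is assumed.
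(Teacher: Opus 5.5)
Your proof is correct and follows essentially the same route as the paper: an orthonormal eigenbasis of the symmetric matrix $M$ and expansion on $\mathbf 1^\perp$ give \eqref{eq:spectral-L2-bound}, and then $f=\bar f\mathbf 1+f^\circ$, $g=\bar g\mathbf 1+g^\circ$ with vanishing cross terms and Cauchy--Schwarz give \eqref{eq:mixing}; the paper also proves the multiplicity-one and no-$(-D)$ facts that you cite as standard. One small correction to your closing remark: the cross terms vanish even without $S=S^{-1}$, because each right translation $u\mapsto us$ is a bijection, so $M$ preserves zero-sum functions (this is how the paper argues); symmetry is really needed for the orthonormal eigenbasis behind \eqref{eq:spectral-L2-bound}.
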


\begin{proof}
Since $S=S^{-1}$, the Cayley graph is undirected and $M$ is self-adjoint. Indeed,
\[
        \langle Mf,h\rangle = \sum_{\{u,v\}\in\Cay(\Gamma,S)} \bigl(f(v)h(u)+f(u)h(v)\bigr) = \langle f,Mh\rangle .
\]
Thus $M$ has an orthonormal basis of real eigenvectors.

Since $\Cay(\Gamma,S)$ is $D$-regular, the constant function $\mathbf 1$ is an eigenvector with eigenvalue $D$, that is, $M\mathbf 1=D\mathbf 1$. Since the graph is connected, this eigenspace is exactly the one-dimensional space of constant functions. To see this directly, suppose that $M\xi=D\xi$. Let $u_0\in\Gamma$ be a vertex at which $\xi$ is maximal. Then
\[
        D\xi(u_0)=(M\xi)(u_0)=\sum_{v\sim u_0}\xi(v)\le D\xi(u_0).
\]
Equality forces $\xi(v)=\xi(u_0)$ for every neighbor $v$ of $u_0$. By connectedness, this propagates along paths and gives $\xi\equiv \xi(u_0)$.

Since the graph is non-bipartite, $-D$ is not an eigenvalue. Indeed, if $M\xi=-D\xi$ for some nonzero $\xi$, then choosing a vertex $u_0$ where $|\xi|$ is maximal gives $-D\xi(u_0)=\sum_{v\sim u_0}\xi(v)$. Equality in the triangle inequality forces every neighbor $v$ of $u_0$ to satisfy $\xi(v)=-\xi(u_0)$. Repeating this along paths shows that vertices at even distance from $u_0$ have value $\xi(u_0)$, while vertices at odd distance from $u_0$ have value $-\xi(u_0)$. This would define a bipartition of the graph, a contradiction.

Let $\mathbf v_0=\frac{1}{\sqrt n}\mathbf 1, \mathbf v_1, \ldots, \mathbf v_{n-1}$ be an orthonormal eigenbasis of $M$, with $M\mathbf v_i=\lambda_i \mathbf v_i$ for $0 \le i \le n-1$. Here $\lambda_0=D$, and all other eigenvalues are non-trivial. By the hypothesis, $|\lambda_i|\le \theta$ for all $i \ge 1$.

Now let $h\colon\Gamma\to\R$ satisfy $\sum_{u\in\Gamma}h(u)=0$. This condition is exactly $\langle h,\mathbf 1\rangle=0$, so $h$ is orthogonal to $\mathbf v_0$. Hence $h=\sum_{i\in[n-1]} c_i \mathbf v_i$ for some $(c_1, \ldots, c_{n-1})\in\R^{n-1}$. Applying $M$ gives $Mh=\sum_{i\in[n-1]} c_i\lambda_i \mathbf v_i$. By orthonormality,
\[
        \|Mh\|_2^2 = \sum_{i\in[n-1]}c_i^2\lambda_i^2 \le \theta^2\sum_{i\in[n-1]}c_i^2 = \theta^2\|h\|_2^2 .
\]
Taking square roots proves \eqref{eq:spectral-L2-bound}.

Observe that $M$ sends zero-sum functions to zero-sum functions: indeed, $\Cay(\Gamma,S)$ is $D$-regular, and for any $h\colon\Gamma\to\R$, since each map $u\mapsto us$ is a bijection of $\Gamma$, we have 
\[
        \sum_{u\in\Gamma}(Mh)(u)=\sum_{u\in\Gamma}\sum_{s\in S}h(us)=D\sum_{u\in\Gamma}h(u),
\]
Now let $f,g\colon\Gamma\to\R$. The functions $f^\circ$ and $g^\circ$ have sum zero, and since $M\mathbf 1=D\mathbf 1$, we have
\[
        \langle f,Mg\rangle = \langle \bar f\,\mathbf 1+f^\circ, M(\bar g\,\mathbf 1+g^\circ)\rangle = Dn\,\bar f\,\bar g+\langle f^\circ,Mg^\circ\rangle ,
\]
where the mixed terms vanish by the zero-sum property. By \eqref{eq:spectral-L2-bound}, $\|Mg^\circ\|_2\le\theta\|g^\circ\|_2$. Therefore Cauchy's inequality gives
\[
        \langle f^\circ,Mg^\circ\rangle \le |\langle f^\circ,Mg^\circ\rangle| \le \|f^\circ\|_2\|Mg^\circ\|_2 \le \theta\|f^\circ\|_2\|g^\circ\|_2 .
\]
This proves \eqref{eq:mixing}.
\end{proof}
\section{Construction of a smooth decreasing sequence}
\label{sec:smooth-sequence-construction}
The goal of this section is to prove Theorem~\ref{thm:smooth-sequence} below, which supplies a sequence of finite $3$-graphs whose Lagrangians approach a limit from above, with consecutive gaps to the limit being asymptotically equal. 

\begin{theorem}\label{thm:smooth-sequence}
There exist a real number $\alpha$ and a sequence of $3$-graphs $(P_i)_{i\ge 1}$ such that
\[
        \lambda(P_i)\downarrow \alpha\quad\text{and}\quad \frac{\lambda(P_i)-\alpha}{\lambda(P_{i+1})-\alpha}\to 1,\qquad\text{as $i\to\infty$}. 
\]
\end{theorem}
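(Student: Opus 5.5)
We build $P_j$ from the LPS graphs of Theorem~\ref{thm:LPS} as a $3$-partite ``spectral gadget'' together with a fixed core, so that $\lambda(P_j)=\alpha+c/n_j+o(1/n_j)$ with $n_j=|\Gamma_j|$ and some fixed $c>0$. Since $n_j=q_j(q_j^2-1)/2$ and $q_{j+1}/q_j\to1$, we get $n_{j+1}/n_j\to1$. Then $(\lambda(P_j)-\alpha)/(\lambda(P_{j+1})-\alpha)\to1$ follows immediately, and passing to a subsequence gives the strict monotone decrease. The whole point is therefore a \emph{first-order} asymptotic for the Lagrangian, in which the excess over $\alpha$ is exactly of order $1/n_j$ with a universal constant.

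\textbf{Construction.} Fix $p$ and $D=p+1$. Take three copies $\Gamma^{(1)},\Gamma^{(2)},\Gamma^{(3)}$ of $\Gamma_j$ and put in all triples $\{u^{(1)},v^{(2)},w^{(3)}\}$ with $v=us$ and $w=v s'$ for $s,s'$ ranging over a fixed pattern, so that the link structure between consecutive parts is $\Cay(\Gamma_j,S_j)$. Attach a fixed finite $3$-graph $K$ (for instance a complete join with a few extra vertices) chosen so that $\alpha=\lambda$ of the ``averaged'' limiting object is attained. The finite gadget must do slightly better than $\alpha$, because each vertex is an individual weight. For example, a finite group allows a deviation of order $1/n$ that the limit cannot exploit, or the construction is made so that the uniform weighting gives $\alpha+c/n$ exactly, by counting edges: each element contributes $D^2$ edges, so the polynomial at uniform weights is an explicit rational function of $n$.

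\textbf{Lower bound.} Evaluate $P_{P_j}$ at the symmetric weighting (uniform on each group copy, optimized on $K$). By group symmetry this is a fixed low-dimensional optimization whose value is $\alpha+c/n_j+O(1/n_j^2)$, computed explicitly.

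\textbf{Upper bound (the main obstacle).} We must show that no non-uniform weighting gains more than $o(1/n_j)$. Write a weight vector on each copy as $\bar f\mathbf 1+f^\circ$. Expand the Lagrangian polynomial: the gadget part is a sum of terms $\langle f,Mg\rangle$-type forms, weighted by the weights of the third coordinate. By Lemma~\ref{lem:mixing} with $\theta=2\sqrt{D-1}$, the fluctuation contribution is at most $\theta\|f^\circ\|\|g^\circ\|$. Meanwhile the constraint $\sum=1$ together with the local structure (no repeated vertices, the missing ``diagonal'' edges) gives a negative quadratic penalty of order $D\|f^\circ\|^2$. Choosing $p$ large, so that $\theta\ll D$, makes the Hessian at the uniform point negative definite on zero-sum perturbations with a margin linear in $D$, uniformly in $n_j$. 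We then need a global argument, not only a local one. First reduce to weightings near uniform using a stability/compactness argument: any optimal weight must satisfy the KKT conditions (equal partial derivatives on the support), and together with expansion this forces near-uniformity. Then the local concavity gives $\lambda(P_j)\le\alpha+c/n_j+O(1/n_j^2)$. I expect this step to be the hardest, in particular controlling supports that are not the whole group and showing that the $1/n$ correction constant is exactly $c$.
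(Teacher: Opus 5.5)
Your outline has the right overall shape: an LPS gadget, an excess over $\alpha$ of exact order $1/n_j$, $n_{j+1}/n_j\to1$, a spectral upper bound, then a subsequence. But the ingredients that make it work are missing or wrong.

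\textbf{The gadget is too sparse.} Your gadget has edges $\{u^{(1)},(us)^{(2)},(uss')^{(3)}\}$, so only about $nD^2$ edges. At uniform weights its own contribution to the Lagrangian polynomial is $O(D^2/n^2)$, a second-order term, so it cannot produce a positive first-order excess $c/n$. The paper's gadget (Construction~\ref{con:spectral-gadget}) uses $z=xsy$ instead. It has $Dn_j^2$ edges, and after relabelling each vertex link has the adjacency matrix of $\Cay(\Gamma_j,S_j)$ as its biadjacency matrix (Claim~\ref{clm:link-relabel}). At uniform weights it contributes $2D/(3m_j)$. The ``core'' is also not a fixed finite $3$-graph attached in some unspecified way. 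The paper puts $k$ copies of the gadget into the parts of the $3$-graph containing every triple not inside a single part. This gives $\alpha=1-1/k^2$ and first-order constant $c_0=(2D/3-3(k-1))/k^2$, which is positive only because $k<2D/9+1$. In that outer structure, uniform weights already lose $3(k-1)/(k^2m_j)$ to ordered triples with a repeated vertex. So with a gadget as sparse as yours the first-order term would be negative.

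\textbf{The upper bound is not carried out, and its mechanism is misidentified.} There is no negative quadratic penalty of order $D$. The gadget only gains from non-uniformity, and by at most $\theta(\sum_v y_v^2-1/m_j)$ for every probability vector (Lemma~\ref{lem:centered}). This comes from applying the mixing lemma to all three links and averaging with $2\sqrt{uv}\le u+v$. The penalty is instead the outer-structure term $-3(1-w_i)s_i$, whose coefficient is an absolute constant. The gain $\theta w_i^3(r_i-1/m_j)$ is dominated by the loss $3(1-w_i)w_i^2(r_i-1/m_j)$ only when $w_i<3/(\theta+3)$. So what is needed is small part masses, i.e.\ many outer parts. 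The paper gets $\max_i w_i\le(9/7)^{1/3}k^{-2/3}$ for every maximizer from $\lambda(H_j)\le 2/9$ and the uniform-weight lower bound. This requires $k\gg D^{3/4}$, and taking $p$ large is what makes the window $D^{3/4}\ll k<2D/9+1$ non-empty. Once the $w_i$ are small, the bound is decreasing in each $r_i$, so one may set $r_i=1/m_j$ and finish with the $k$-dimensional perturbation Lemma~\ref{lem:perturb}. Because the spectral inequality is global, no KKT, near-uniformity or support analysis is needed.

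\textbf{The subsequence must preserve the ratio.} Passing to a decreasing subsequence must be done so that the ratio still tends to $1$; an arbitrary decreasing subsequence need not have this property. Lemma~\ref{lem:extraction} does this by advancing until $m_j$ has grown by a factor $1+\sqrt{\eta_i}$, where $\eta_i\to0$ bounds both the $o(1)$ errors and the consecutive ratios.
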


There are a few steps for the constructions of Theorem~\ref{thm:smooth-sequence}. 
We first build a $3$-partite spectral gadget from LPS Ramanujan graphs in Construction~\ref{con:spectral-gadget}; Lemma~\ref{lem:centered} gives the spectral estimate needed to control its Lagrangian. We then place copies of this gadget inside a complete multipartite $3$-graph, as defined in Construction~\ref{con:outer-Q}, and Proposition~\ref{prop:Q-lag} gives the required asymptotic formula for its Lagrangian. Finally, Lemma~\ref{lem:extraction}, together with the smooth growth of the underlying LPS groups, allows us to extract a decreasing subsequence whose successive gaps to the limit are asymptotically equal.

Throughout this section, we fix the following parameters.

Fix a sufficiently large prime $p\equiv 1\pmod 4$, and put
\[
        D\coloneqq p+1,\qquad \theta\coloneqq 2\sqrt{D-1}.
\]
Choose an integer $k$ such that
\begin{equation}
        \left(\frac97\right)^{1/3}k^{-2/3}<\frac3{\theta+3} \qquad\text{and}\qquad k<\frac{2D}{9}+1 .
        \label{eq:k-choice}
\end{equation}
Such a choice is possible for all sufficiently large $p$, because $\theta=O(\sqrt D)$, so the first inequality only requires $k\gg D^{3/4}$, while the second allows $k=\Theta(D)$.
Put
\[
        \alpha\coloneqq 1-\frac1{k^2}.
\]

Let $(\Gamma_j,S_j)_{j=1}^\infty$ be the sequence returned by Theorem~\ref{thm:LPS} on input $p$. Write
\[
        n_j\coloneqq |\Gamma_j|\qquad\text{and}\qquad m_j\coloneqq 3n_j .
\]
Since $q_j\to\infty$ and $q_{j+1}/q_j\to 1$, we have
\begin{align*}
    \frac{m_{j+1}}{m_j}
    = \frac{3n_{j+1}}{3n_{j}}
    = \frac{3q_{j+1}(q_{j+1}^{2}-1)/2}{3q_j(q_j^2-1)/2}
    \to 1. 
\end{align*}

\subsection{The internal spectral gadget}
Recall that $k$ was fixed to satisfy~\eqref{eq:k-choice} and that we let $\alpha:=1-1/k^2$.

\begin{construction}[The internal spectral gadget $H_j$]\label{con:spectral-gadget}
    For each $j$, define a $3$-partite $3$-graph $H_j$ with vertex classes $X_j,Y_j,Z_j$, where each class is a disjoint copy of $\Gamma_j$. We identify each of these copies with $\Gamma_j$ when writing the group operation. 
    Each triple $\{x,y,z\}$ with $(x,y,z) \in X_j\times Y_j \times Z_j$ forms an edge of $H_j$ if and only if $z=xsy$ for some $s \in S_j$. 
\end{construction}

Note that for every $j$, we have $|V(H_j)|=m_j=3n_j$ and $|H_j|=Dn_j^2$.

\begin{lemma}\label{lem:centered}
For every $j$, the $3$-graph $H_j$ satisfies $\lambda(H_j)\le 2/9$. Moreover, for every probability vector $\mathbf{y}$ on $V(H_j)$,
\[
        P_{H_j}(\mathbf{y})\le \frac{2D}{3m_j} +\theta\left(\sum_{v\in V(H_j)}y_v^2-\frac1{m_j}\right).
\]
\end{lemma}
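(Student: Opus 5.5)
The first claim is immediate because $H_j$ is $3$-partite. For $\mathbf y\in\Delta_{V(H_j)}$, write $a,b,c\colon\Gamma_j\to\R_{\ge0}$ for the restrictions of $\mathbf y$ to $X_j,Y_j,Z_j$ (each identified with $\Gamma_j$). Let $A,B,C$ be their sums, so $A+B+C=1$. Every edge uses one vertex from each class, so
\[
P_{H_j}(\mathbf y)=6\sum_{x,y\in\Gamma_j}\sum_{s\in S_j}a(x)b(y)c(xsy)\le 6ABC\le \tfrac{6}{27}=\tfrac29 ,
\]
by AM--GM. This gives $\lambda(H_j)\le 2/9$.

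For the spectral bound, I would fix one of the three variables and apply Lemma~\ref{lem:mixing} to the remaining bilinear form. Take $y$ fixed first, and put $c_y(u)\coloneqq c(uy)$. Then the inner sum is $\sum_x a(x)\sum_s c_y(xs)=\langle a,Mc_y\rangle$, with $M=M_{\Gamma_j,S_j}$. Right translation preserves the mean and the centred $L^2$-norm, so $\bar c_y=\bar c$ and $\|c_y^\circ\|_2=\|c^\circ\|_2$. Also $\Cay(\Gamma_j,S_j)$ is connected and non-bipartite with non-trivial eigenvalues at most $\theta$ in absolute value, by Theorem~\ref{thm:LPS} and Theorem~\ref{thm:LPSOneQ}. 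Hence \eqref{eq:mixing} applies.

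Since $b\ge0$, summing over $y$ with weights $b(y)$ gives
\[
P_{H_j}(\mathbf y)\le 6B\Bigl(\tfrac{D}{n_j}AC+\theta\|a^\circ\|_2\|c^\circ\|_2\Bigr)
=\tfrac{6DABC}{n_j}+6\theta B\|a^\circ\|_2\|c^\circ\|_2 ,
\]
using $n_j\bar a=A$ and $n_j\bar c=C$. The first term is at most $6D/(27n_j)=2D/(3m_j)$.

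For the second term, decompose
\[
\sum_v y_v^2=\|a^\circ\|^2+\|b^\circ\|^2+\|c^\circ\|^2+\frac{A^2+B^2+C^2}{n_j}.
\]
Since $A^2+B^2+C^2\ge 1/3$ and $1/m_j=1/(3n_j)$, we get $\sum_v y_v^2-1/m_j\ge \|a^\circ\|^2+\|c^\circ\|^2\ge 2\|a^\circ\|\|c^\circ\|$. Therefore the desired inequality follows provided $6B\le 2$, i.e.\ $B\le 1/3$.

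The main obstacle is that $B$ need not be the smallest class weight. So I would show that each of the three classes can play the role of the fixed variable, and then choose the class of minimum weight, which is at most $1/3$.

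Fixing $x$: put $c'(u)=c(xu)$. The form becomes $\sum_y b(y)\sum_s c'(sy)$. This is the left-multiplication Cayley operator, which is conjugate to $M$ via $u\mapsto u^{-1}$ because $S_j=S_j^{-1}$. Hence it has the same spectrum, and the same mixing bound holds.

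Fixing $z$: the condition $z=xsy$ is equivalent to $y=s^{-1}x^{-1}z$. Put $\tilde a(w)=a(w^{-1})$ and $b'(w)=b(wz)$, and use the symmetry of $S_j$. The form becomes $\sum_w\tilde a(w)\sum_s b'(ws)=\langle\tilde a,Mb'\rangle$. Inversion and translation again preserve means and centred norms.

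In each of the three cases the same computation goes through, with the fixed class contributing the factor $6\cdot(\text{its weight})\le 2$. This completes the plan.
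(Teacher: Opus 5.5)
Your proof is correct and is essentially the paper's argument: the paper likewise applies the mixing lemma after fixing a vertex in each of the three classes (Claim~\ref{clm:link-relabel}). It then averages the three bounds using $A+B+C=1$, whereas you pick the class of smallest weight; both give the same final estimate. One small slip: in the $z$-fixed case your substitution actually gives $\sum_w\tilde a(w)\sum_{s}b'(sw)$, a left-multiplication form rather than $\langle\tilde a,Mb'\rangle$, but this is harmless because you already showed that operator is conjugate to $M$ by inversion (alternatively, use $w\mapsto a(zw)$ and $u\mapsto b(u^{-1})$ as in the paper to get the right Cayley form directly).
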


\begin{proof}
The inequality $\lambda(H_j)\le 2/9$ follows straightforwardly from the fact that $H_j$ is $3$-partite. It therefore suffices to prove the second assertion of Lemma~\ref{lem:centered}.

	Fix $j$, and write $\Gamma=\Gamma_j$, $S=S_j$, $n=|\Gamma|$, $m=3n$, and $X=X_j$, $Y=Y_j$, $Z=Z_j$. Let $\mathbf{y}=(y_v)_{v\in V(H_j)}$ be a probability vector. Denote its restrictions to $X,Y,Z$, under our identifications of these sets with $\Gamma$, by $\mathbf{a}=(a_x)_{x\in\Gamma}$, $\mathbf{b}=(b_y)_{y\in\Gamma}$ and $\mathbf{c}=(c_z)_{z\in\Gamma}$, respectively.
Define
\[
        A\coloneqq \sum_{x\in\Gamma}a_x, \qquad B\coloneqq \sum_{y\in\Gamma}b_y, \qquad C\coloneqq \sum_{z\in\Gamma}c_z,
\]
so that $A+B+C=1$. Also put
\[
        R_X\coloneqq \sum_{x\in\Gamma} a_x^2, \qquad R_Y\coloneqq \sum_{y\in\Gamma} b_y^2, \qquad R_Z\coloneqq \sum_{z\in\Gamma} c_z^2,
\]
and
\[
        \Delta_X\coloneqq R_X-\frac{A^2}{n}, \qquad \Delta_Y\coloneqq R_Y-\frac{B^2}{n}, \qquad \Delta_Z\coloneqq R_Z-\frac{C^2}{n}.
\]
The quantities $\Delta_X,\Delta_Y,\Delta_Z$ are non-negative by Cauchy's inequality.
Define 
\[
        T
        \coloneqq \sum_{x,y\in\Gamma}\sum_{s\in S}a_xb_yc_{xsy} .
\]
Note that $P_{H_j}(\mathbf{y})=6T$.

We use the standard terminology that the \emph{link} of a vertex $v$ in a $3$-graph is the graph whose edges are the pairs that form hyperedges together with $v$. For a bipartite graph with parts $U$ and $V$, its biadjacency matrix is the $U\times V$ matrix whose $(u,v)$-entry is $1$ if $uv$ is an edge, and is $0$ otherwise. 

\begin{claim}\label{clm:link-relabel}
For every vertex of $H_j$, its link between the other two vertex classes is a bipartite graph whose biadjacency matrix, after relabellings that are permutations of $\Gamma$, is identical to the adjacency matrix of $\Cay(\Gamma,S)$.
\end{claim}

\begin{proof}[Proof of the claim]
Fix $y_\ast\in Y$. The corresponding link has parts $X$ and $Z$. A pair $(x,z)$ is an edge precisely when $z=xsy_\ast$ for some $s\in S$. If we relabel the $X$-side by $\hat{x}\coloneqq x$ and the $Z$-side by $\hat{z}\coloneqq zy_\ast^{-1}$, then this condition becomes $\hat{z}=\hat{x}s$.

Fix $x_\ast\in X$. The corresponding link has parts $Y$ and $Z$. Relabel the $Y$-side by $\hat{y}\coloneqq y^{-1}$ and the $Z$-side by $\hat{z}\coloneqq z^{-1}x_\ast$. The relation $z=x_\ast sy$ is equivalent to $\hat{z}=\hat{y}s^{-1}$, which has the same right Cayley form since $S=S^{-1}$.

Finally, fix $z_\ast\in Z$. The corresponding link has parts $Y$ and $X$. Relabel the $Y$-side by $\hat{y}\coloneqq y^{-1}$ and the $X$-side by $\hat{x}\coloneqq z_\ast^{-1}x$. The relation $z_\ast=xsy$ is equivalent to $\hat{x}=\hat{y}s^{-1}$, again of the same right Cayley form. 

In each case the relabellings are permutations of $\Gamma$, and the resulting biadjacency relation is exactly the right Cayley relation.
\end{proof}

For fixed $y\in\Gamma$, define the vector $\mathbf{c}^{(y)}=(c^{(y)}_v)_{v\in\Gamma}$ by $c^{(y)}_v\coloneqq c_{vy}$. By this definition,
\[
        \sum_{x\in\Gamma}\sum_{s\in S}a_xc_{xsy} = \sum_{x\in\Gamma}\sum_{s\in S}a_xc^{(y)}_{xs}.
\]
The right-hand side is $\langle \mathbf{a},M\mathbf{c}^{(y)}\rangle$, where $M=M_{\Gamma,S}$ is the adjacency matrix of $\Cay(\Gamma,S)$. We view $\mathbf{a}$ and $\mathbf{c}^{(y)}$ as functions on $\Gamma$. The vector $\mathbf{a}$ has total mass $A$ and centred $L^2$ norm $\sqrt{\Delta_X}$. Also, since $v\mapsto vy$ is a permutation of $\Gamma$, the vector $\mathbf{c}^{(y)}$ has total mass $C$ and centred $L^2$ norm $\sqrt{\Delta_Z}$. Hence \eqref{eq:mixing} gives
\[
        \sum_{x\in\Gamma}\sum_{s\in S}a_xc_{xsy}
        = \langle \mathbf{a},M\mathbf{c}^{(y)}\rangle
        \le \frac{D}{n}AC+\theta\sqrt{\Delta_X\Delta_Z}.
\]
Multiplying this inequality by $b_y$ and summing over $y\in\Gamma$, using $\sum_{y\in\Gamma} b_y=B$, gives
\begin{equation}
        T\le \frac{D}{n}ABC+\theta B\sqrt{\Delta_X\Delta_Z}.
        \label{eq:XZ}
\end{equation}
The same argument applied to the links obtained by fixing a vertex in $X$ and in $Z$, using the relabellings from Claim~\ref{clm:link-relabel}, gives the analogous bounds
\begin{equation}
        T\le \frac{D}{n}ABC+\theta A\sqrt{\Delta_Y\Delta_Z},
        \label{eq:YZ}
\end{equation}
and
\begin{equation}
        T\le \frac{D}{n}ABC+\theta C\sqrt{\Delta_X\Delta_Y}.
        \label{eq:XY}
\end{equation}
Averaging \eqref{eq:XZ}, \eqref{eq:YZ} and \eqref{eq:XY}, and using the inequality $2\sqrt{uv}\le u+v$, we obtain
\begin{align*}
        T
        &\le \frac{D}{n}ABC
        +\frac{\theta}{3}
        \left(
        B\sqrt{\Delta_X\Delta_Z}
        +A\sqrt{\Delta_Y\Delta_Z}
        +C\sqrt{\Delta_X\Delta_Y}
        \right) 
        \le \frac{D}{n}ABC
        +\frac{\theta}{6}(\Delta_X+\Delta_Y+\Delta_Z).
\end{align*}
Multiplying by $6$, we get
\begin{equation}
        P_{H_j}(\mathbf{y})
        = 6T
        \le \frac{6D}{n}ABC +\theta(\Delta_X+\Delta_Y+\Delta_Z).
        \label{eq:lambdaH-main}
\end{equation}
Now $6ABC\le 2/9$, because $A+B+C=1$, and
\[
        \Delta_X+\Delta_Y+\Delta_Z =\sum_{v\in V(H_j)}y_v^2-\frac{A^2+B^2+C^2}{n} \le \sum_{v\in V(H_j)}y_v^2-\frac1{3n}.
\]
Since $m=3n$, \eqref{eq:lambdaH-main} implies
\[
        P_{H_j}(\mathbf{y}) \le \frac{2D}{9n} +\theta\left(\sum_{v\in V(H_j)}y_v^2-\frac1{3n}\right) = \frac{2D}{3m} +\theta\left(\sum_{v\in V(H_j)}y_v^2-\frac1m\right).
\]
This proves the centered estimate.
\end{proof}

\subsection{The multipartite outer construction}
\begin{construction}[The multipartite outer construction $Q_j$]\label{con:outer-Q}
For each $j$, take $k$ disjoint $m_j$-vertex sets $W_1,\ldots,W_k$. 
Inside each $W_i$, put one copy of the $3$-graph $H_j$ from Construction~\ref{con:spectral-gadget}. In addition, make every $3$-set $\{u,v,w\}$ an edge whenever its three vertices are not all contained in the same part $W_i$. Let the resulting $3$-graph be $Q_j$.
\end{construction}

Note that $|V(Q_j)|=km_j$, and
\[
        |Q_j|=k|H_j|+\binom{km_j}{3}-k\binom{m_j}{3}
        =kDn_j^2+\binom{km_j}{3}-k\binom{m_j}{3}.
\]

\begin{proposition}\label{prop:Q-lag}
Let $c_0\coloneqq \left(\frac{2D}{3}-3(k-1)\right)/{k^2} > 0$. 
As $j\to\infty$, we have
\[
        \lambda(Q_j) = \alpha+\frac{c_0+o(1)}{m_j}.
\]
\end{proposition}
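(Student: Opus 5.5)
Write $m=m_j$ and $n=n_j$, and take a probability vector $\mathbf y$ on $V(Q_j)$. Let $w_i$ be the total weight of $\mathbf y$ on $W_i$ and let $\mathbf y^{(i)}$ denote the normalised restriction to $W_i$ (when $w_i>0$). Then the Lagrangian polynomial decomposes as
\[
P_{Q_j}(\mathbf y)=6\sum_{\text{triples not inside one }W_i}\prod y_v+\sum_i w_i^3P_{H_j}(\mathbf y^{(i)}).
\]
The cross term equals $P_{K^{(3)}}(\mathbf y)-\sum_i \bigl(\text{complete-graph polynomial restricted to }W_i\bigr)$. I will write it as $1-3\sum_v y_v^2+2\sum_v y_v^3-\sum_i\bigl(w_i^3-3w_i\sum_{v\in W_i}y_v^2+2\sum_{v\in W_i}y_v^3\bigr)$. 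The cubic terms cancel, leaving
\[
1-\sum_i w_i^3-3\sum_i(1-w_i)R_i,\qquad R_i\coloneqq\sum_{v\in W_i}y_v^2 .
\]
This is the expansion $e_3$-style via power sums; I will double-check the constants.

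For the lower bound, take $\mathbf y$ uniform, so $y_v=1/(km)$, $w_i=1/k$ and $R_i=1/(k^2m)$. Then $P_{H_j}$ at the uniform vector equals $6Dn^2/(3n)^3\cdot 1=2D/(9n)=2D/(3m)$. Hence
\[
P=1-\frac1{k^2}-\frac{3(k-1)}{k^2m}+\frac{2D}{3k^2m}=\alpha+\frac{c_0}{m}.
\]
Here $c_0>0$ follows from $k<2D/9+1$ in \eqref{eq:k-choice}.

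For the upper bound, fix an optimal $\mathbf y$ and apply Lemma~\ref{lem:centered} in scaled form:
\[
w_i^3P_{H_j}(\mathbf y^{(i)})\le \frac{2D}{3m}w_i^3+\theta\Bigl(w_iR_i-\frac{w_i^3}{m}\Bigr).
\]
This gives
\[
P\le 1-\sum_i w_i^3\Bigl(1-\frac{2D}{3m}+\frac\theta m\Bigr)-\sum_i\bigl(3(1-w_i)-\theta w_i\bigr)R_i .
\]
Since $R_i\ge w_i^2/m$, the coefficient $3(1-w_i)-\theta w_i$ is positive as long as $w_i<3/(\theta+3)$. On that range, replacing $R_i$ by $w_i^2/m$ yields
\[
P\le 1-\sum_i w_i^2+\frac1m\sum_i w_i^3\Bigl(\frac{2D}{3}-3(k-1)\Bigr)\cdot(\ldots).
\]
More precisely, the bound is $1-\sum_i w_i^2+O(1/m)$, which is maximised at $w_i=1/k$ up to $O(1/m)$ perturbations. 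A stability argument then gives $w_i=1/k+O(1/m)$ (the main term $1-\sum w_i^2$ is strictly concave), so the $1/m$ term evaluates to $c_0+o(1)$.

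The main obstacle is ruling out optimal vectors in which some $w_i\ge 3/(\theta+3)$, or which are far from balanced. Here I use the crude bound $\lambda(H_j)\le 2/9$. On the part containing such a heavy $W_i$, the total is at most roughly $1-w_i^3(1-2/9)-\sum_{l\neq i}w_l^3$ minus the (nonnegative) $R$ terms, which is at most $1-\frac79w_i^3-(1-w_i)^3/(k-1)^2$. The first inequality in \eqref{eq:k-choice}, namely $(9/7)^{1/3}k^{-2/3}<3/(\theta+3)$, says exactly that $\frac79 w_i^3>1/k^2$ when $w_i\ge 3/(\theta+3)$. So such a $\mathbf y$ gives $P<\alpha$, which is below the lower bound $\alpha+c_0/m$, and the optimum must have all $w_i<3/(\theta+3)$. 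Once that is established, a compactness/Taylor argument around $w_i=1/k$ gives the asymptotic $\lambda(Q_j)=\alpha+(c_0+o(1))/m_j$.
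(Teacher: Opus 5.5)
Your proposal is correct and follows the paper's proof essentially step for step: the same exact decomposition $P_{Q_j}=1-\sum_i w_i^3-3\sum_i(1-w_i)R_i+\sum_i w_i^3P_{H_j}(\mathbf y^{(i)})$, the uniform vector for the lower bound, Lemma~\ref{lem:centered} combined with $R_i\ge w_i^2/m$ under the sign condition $w_i<3/(\theta+3)$, the crude bound $\lambda(H_j)\le 2/9$ with the first inequality of \eqref{eq:k-choice} to exclude heavy parts, and a perturbation argument at $w_i=1/k$ (your quadratic-growth version is a quantitative form of Lemma~\ref{lem:perturb}). Two slips need fixing. First, after substituting $R_i=w_i^2/m$ the bound reads $1-\sum_i w_i^3+\frac1m\bigl((\frac{2D}{3}+3)\sum_i w_i^3-3\sum_i w_i^2\bigr)$, so the main term is $1-\sum_i w_i^3$ and not $1-\sum_i w_i^2$, and its $1/m$-coefficient is exactly $c_0$ at the uniform point. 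Second, in the heavy-part step the valid bound is $P\le 1-\frac79\sum_l w_l^3\le 1-\frac79 w_i^3$, because the other parts also carry internal edges; this weaker bound is all you actually use.
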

Note that $c_0> 0$ is equivalent to $\frac{2D}{3}>3(k-1)$, which follows from the second inequality in \eqref{eq:k-choice}.

Note that the function $1-\sum_{i\in[k]} w_i^3$ on $\Delta_{k-1}$ has a unique maximum at the uniform vector. The next elementary lemma allows us to control the maximum value when a small continuous perturbation is added to the function.

\begin{lemma}\label{lem:perturb}
Let $k\ge 1$ be fixed. Let $f(\mathbf{w})\coloneqq 1-\sum_{i\in[k]} w_i^3$ for $\mathbf{w} = (w_1, \ldots, w_k) \in \Delta_{k-1}$. Let $h\colon\Delta_{k-1}\to\R$ be a continuous function. Let $\mathbf{w}_0=(1/k,\ldots,1/k)$. If $\tau\to 0^+$, then
\[
        \max_{\mathbf{w}\in\Delta_{k-1}}\{f(\mathbf{w})+\tau h(\mathbf{w})\} =f(\mathbf{w}_0)+\tau h(\mathbf{w}_0)+o(\tau).
\]
\end{lemma}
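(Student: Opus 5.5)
The plan is to prove matching lower and upper bounds. The lower bound is immediate: taking $\mathbf{w}=\mathbf{w}_0$ gives
\[
\max_{\mathbf{w}}\{f+\tau h\}\ge f(\mathbf{w}_0)+\tau h(\mathbf{w}_0).
\]
For the upper bound I will use compactness of $\Delta_{k-1}$ together with uniqueness of the maximiser of $f$. Since $h$ is continuous on a compact set, $|h|\le K$ for some constant $K$.

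The first step is to check that $\mathbf{w}_0$ is the unique maximiser of $f$ on $\Delta_{k-1}$. This follows from the power mean inequality, $\sum_i w_i^3\ge k(1/k)^3$, with equality only at the uniform vector. Alternatively, it follows from strict convexity of $t\mapsto t^3$ on $[0,\infty)$.

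Next, let $\mathbf{w}_\tau$ be a maximiser of $f+\tau h$; one exists by compactness. Comparing its value with that of $\mathbf{w}_0$ gives
\[
f(\mathbf{w}_\tau)\ge f(\mathbf{w}_0)+\tau\bigl(h(\mathbf{w}_0)-h(\mathbf{w}_\tau)\bigr)\ge f(\mathbf{w}_0)-2K\tau .
\]
Hence $f(\mathbf{w}_\tau)\to f(\mathbf{w}_0)$ as $\tau\to0^+$. By compactness and uniqueness of the maximiser, this forces $\mathbf{w}_\tau\to\mathbf{w}_0$. To see this, take any subsequential limit $\mathbf{w}^*$; by continuity $f(\mathbf{w}^*)=f(\mathbf{w}_0)$, so $\mathbf{w}^*=\mathbf{w}_0$.

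Finally, I will bound the maximum using $f(\mathbf{w}_\tau)\le f(\mathbf{w}_0)$:
\[
\max_{\mathbf{w}}\{f+\tau h\}=f(\mathbf{w}_\tau)+\tau h(\mathbf{w}_\tau)\le f(\mathbf{w}_0)+\tau h(\mathbf{w}_\tau).
\]
By continuity of $h$ and $\mathbf{w}_\tau\to\mathbf{w}_0$, we have $h(\mathbf{w}_\tau)=h(\mathbf{w}_0)+o(1)$. The right-hand side is therefore $f(\mathbf{w}_0)+\tau h(\mathbf{w}_0)+o(\tau)$, which matches the lower bound and proves the claim.

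I do not expect a real obstacle here. The only point needing care is the step showing $\mathbf{w}_\tau\to\mathbf{w}_0$, and for that the compactness argument above suffices, with no quantitative rate needed. If a quantitative version were wanted, I would instead use the Hessian of $f$ on the tangent space of the simplex at $\mathbf{w}_0$. That Hessian is negative definite, which gives $f(\mathbf{w}_0)-f(\mathbf{w})\ge c|\mathbf{w}-\mathbf{w}_0|^2$ near $\mathbf{w}_0$.
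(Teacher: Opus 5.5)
Your proposal is correct and follows essentially the same route as the paper: the lower bound comes from evaluating at $\mathbf{w}_0$. The upper bound then comes from three steps. First, show that any maximiser $\mathbf{w}_\tau$ satisfies $f(\mathbf{w}_\tau)\ge f(\mathbf{w}_0)-O(\tau)$, which forces $\mathbf{w}_\tau\to\mathbf{w}_0$ by compactness and uniqueness of the maximiser of $f$. Second, use continuity of $h$. Third, use $f(\mathbf{w}_\tau)\le f(\mathbf{w}_0)$. You additionally justify the uniqueness of the maximiser (via the power mean inequality), which the paper simply asserts.
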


\begin{proof}
The function $f$ has a unique maximum on $\Delta_{k-1}$, attained at $\mathbf{w}_0$. The lower bound follows by evaluating at $\mathbf{w}_0$.

For the upper bound, let $\mathbf{w}(\tau)$ be a maximizer of $f+\tau h$. Since $h$ is bounded and
\[
        f(\mathbf{w}(\tau))+\tau h(\mathbf{w}(\tau))\ge f(\mathbf{w}_0)+\tau h(\mathbf{w}_0),
\]
we have $f(\mathbf{w}(\tau))\ge f(\mathbf{w}_0)-O(\tau)$. Every limit point of $\mathbf{w}(\tau)$ as $\tau\to 0^+$ is therefore a maximizer of $f$, hence is $\mathbf{w}_0$. Thus $\mathbf{w}(\tau)\to \mathbf{w}_0$. By continuity, $h(\mathbf{w}(\tau))=h(\mathbf{w}_0)+o(1)$, and since $f(\mathbf{w}(\tau))\le f(\mathbf{w}_0)$, we get
\[
        f(\mathbf{w}(\tau))+\tau h(\mathbf{w}(\tau))\le f(\mathbf{w}_0)+\tau h(\mathbf{w}_0)+o(\tau). \qedhere
\]
\end{proof}

\bigskip 

We now present the proof of Proposition~\ref{prop:Q-lag}. 

\begin{proof}[Proof of Proposition~\ref{prop:Q-lag}]
Let $\mathbf{z}$ be a probability vector on $V(Q_j)$. For $i\in [k]$, define
\[
        w_i\coloneqq \sum_{v\in W_i}z_v,\qquad s_i\coloneqq \sum_{v\in W_i}z_v^2,
\]
so that $\sum_{i\in[k]} w_i=1$. If $w_i>0$, let $\mathbf{y}_i=(y_{i,v})_{v\in W_i}$ be the normalized restriction of $\mathbf{z}$ to $W_i$, namely $y_{i,v}\coloneqq z_v/w_i$, and put
\[
        r_i\coloneqq \sum_{v\in W_i}y_{i,v}^2=\frac{s_i}{w_i^2}.
\]
If $w_i=0$, then $z_v=0$ for every $v\in W_i$; in this case, as a harmless convention, we take $\mathbf{y}_i$ to be the uniform probability vector on $W_i$ and put $r_i\coloneqq 1/m_j$. With this convention, $s_i=w_i^2r_i$ for every $i$.

The contribution to $P_{Q_j}(\mathbf{z})$ from all triples not contained in a single $W_i$ is computed as follows. We first count all ordered triples not contained in one part, allowing repetitions, and then subtract the ordered triples with exactly two equal vertices and the third vertex in a different part:
\begin{align*}
        \sum_{\substack{(u,v,w)\in V(Q_j)^3\\
        \{u,v,w\}\not\subseteq W_i\text{ for all }i\in[k]}}
        z_uz_vz_w
        -
        3\sum_{i\in[k]}\sum_{\substack{u\in W_i\\ v\notin W_i}}z_u^2z_v 
        & =
        \Big(1-\sum_{i\in[k]}\Big(\sum_{u\in W_i}z_u\Big)^3\Big)
        -
        3\sum_{i\in[k]}\Big(\sum_{u\in W_i}z_u^2\Big)
        \Big(\sum_{v\notin W_i}z_v\Big)  \\
        & =
        1-\sum_{i\in[k]} w_i^3-3\sum_{i\in[k]}(1-w_i)s_i .
\end{align*}
Therefore
\begin{equation}
        P_{Q_j}(\mathbf{z}) =1-\sum_{i\in[k]} w_i^3-3\sum_{i\in[k]}(1-w_i)w_i^2r_i +\sum_{i\in[k]} w_i^3P_{H_j}(\mathbf{y}_i).
        \label{eq:Q-exact}
\end{equation}
Using Lemma~\ref{lem:centered}, $P_{H_j}(\mathbf{y}_i)\le \frac{2D}{3m_j}+\theta\left(r_i-\frac1{m_j}\right)$. Substituting this into \eqref{eq:Q-exact}, we get
\begin{equation}
        P_{Q_j}(\mathbf{z}) \le 1-\sum_{i\in[k]} w_i^3 +\left(\frac{2D}{3m_j}-\frac{\theta}{m_j}\right)\sum_{i\in[k]} w_i^3 +\sum_{i\in[k]} w_i^2r_i\bigl((\theta+3)w_i-3\bigr).
        \label{eq:Q-upper1}
\end{equation}

We first show that, for every probability vector $\mathbf{z}$ maximizing $P_{Q_j}$, all part masses $w_i$ are small. Since $\lambda(H_j)\le 2/9$, equation \eqref{eq:Q-exact} gives
\[
        P_{Q_j}(\mathbf{z})\le 1-\frac79\sum_{i\in[k]} w_i^3.
\]
On the other hand, let $\mathbf{u}$ be the uniform vector on $V(Q_j)$ and evaluate $P_{Q_j}$ at $\mathbf{u}$. 
Then \eqref{eq:Q-exact} gives
\begin{align}\label{eq:Q-lower}
    \lambda(Q_j)\ge P_{Q_j}(\mathbf{u})
        &=
        1-k\cdot\frac1{k^3}
        -3k\left(1-\frac1k\right)\frac1{k^2}\frac1{m_j}
        +k\cdot\frac1{k^3}\frac{2D}{3m_j} \notag \\
        &=1-\frac1{k^2}-\frac{3(k-1)}{k^2m_j}
        +\frac{2D}{3k^2m_j} 
        =\alpha+\frac{c_0}{m_j}.
\end{align}
If $\mathbf{z}$ is a maximizing vector, then the last two displayed inequalities imply
\[
        1-\frac1{k^2}+\frac{c_0}{m_j}
        \le \lambda(Q_j)=P_{Q_j}(\mathbf{z})
        \le 1-\frac79\sum_{i\in[k]} w_i^3 .
\]
Hence, since $c_0>0$, we have 
\[
        \max_{i\in[k]} w_i^3
        \le\sum_{i\in[k]} w_i^3
        \le \frac97\left(\frac1{k^2}-\frac{c_0}{m_j}\right)
        \le \frac{9}{7k^2}, 
\]
which implies that $\max_{i\in[k]} w_i \le \left(9/7\right)^{1/3}k^{-2/3}$. 
Combining this with the first inequality in \eqref{eq:k-choice}, we obtain that every maximizing vector satisfies $w_i<\frac3{\theta+3}$ for every $i$. Hence $(\theta+3)w_i-3<0$ for every $i$. Since $r_i\ge 1/m_j$ by Cauchy's inequality, the last term in \eqref{eq:Q-upper1}, for a maximizing vector, is at most its value when each $r_i$ is replaced by $1/m_j$. Consequently
\begin{equation}
        \lambda(Q_j)\le \max_{\mathbf{w} \in \Delta_{k-1}} \left[ 1-\sum_{i\in[k]} w_i^3 +\frac1{m_j} \left( \left(\frac{2D}{3}+3\right)\sum_{i\in[k]} w_i^3-3\sum_{i\in[k]} w_i^2 \right) \right].
        \label{eq:Q-upper2}
\end{equation}
Applying Lemma~\ref{lem:perturb} with $h(\mathbf{w}) \coloneqq \left(\frac{2D}{3}+3\right)\sum_{i\in[k]} w_i^3-3\sum_{i\in[k]} w_i^2$ and $\tau = 1/m_j$ gives
\begin{align}
        \lambda(Q_j)
        &\le
        \alpha
        +\frac1{m_j}
        \left(
        \frac{\frac{2D}{3}+3}{k^2}-\frac3k
        \right)
        +o(m_j^{-1})   
        =\alpha
        +\frac{c_0}{m_j}
        +o(m_j^{-1}).
        \label{eq:Q-upper3}
\end{align}
Combining  \eqref{eq:Q-lower} and \eqref{eq:Q-upper3} proves the asymptotic formula.
\end{proof}

\subsection{Proof of Theorem~\ref{thm:smooth-sequence}}
We complete the proof of Theorem~\ref{thm:smooth-sequence} in this subsection. 
Recall that Proposition~\ref{prop:Q-lag} gives the right asymptotic size of the gaps $\lambda(Q_j)-\alpha$, but it does not by itself make them monotone. We use the following elementary extraction observation to pass to a decreasing subsequence without losing the smooth growth of the parameters.

\begin{lemma}\label{lem:extraction}
Let $(m_j)_{j=1}^\infty$ be an increasing sequence of positive integers with $m_j\to\infty$ and $m_{j+1}/m_j\to 1$. Suppose that a sequence $(a_j)_{j=1}^\infty$ satisfies $a_j=(c+o(1))/{m_j}$ for some $c>0$. Then there is a subsequence $j_1<j_2<j_3<\cdots$ such that $a_{j_1}>a_{j_2}>a_{j_3}>\cdots$ and ${m_{j_{i+1}}}/{m_{j_i}}\to 1$. 
Consequently, ${a_{j_i}}/{a_{j_{i+1}}}\to 1$.
\end{lemma}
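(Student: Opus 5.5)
We build the subsequence greedily and use the condition $m_{j+1}/m_j\to1$ to show that the index jumps are small on a multiplicative scale. Write $a_j=(c+\eps_j)/m_j$ with $\eps_j\to0$; since $c>0$ we have $a_j>0$ and $a_j\to0$ for all large $j$.

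\emph{Construction.} Pick $j_1$ with $a_{j_1}>0$. Given $j_i$, let $j_{i+1}$ be the smallest index $j>j_i$ with $a_j<a_{j_i}$. Such an index exists because $a_j\to0<a_{j_i}$. By construction the $a_{j_i}$ are strictly decreasing, and the $j_i$ are strictly increasing, so $j_i\to\infty$.

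\emph{Ratio control.} Fix $\eta>0$ and take $i$ large. Minimality of $j_{i+1}$ gives $a_{j_{i+1}-1}\ge a_{j_i}$; when $j_{i+1}-1=j_i$ this holds trivially, and otherwise it holds for every index strictly between $j_i$ and $j_{i+1}$. Substituting the asymptotic form gives
\[
(c+\eps_{j_{i+1}-1})/m_{j_{i+1}-1}\ \ge\ (c+\eps_{j_i})/m_{j_i},
\]
so
\[
m_{j_{i+1}-1}/m_{j_i}\le (c+\eps_{j_{i+1}-1})/(c+\eps_{j_i})\le 1+\eta
\]
once $i$ is large, since both $\eps$'s tend to $0$. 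Also $m_{j_{i+1}}/m_{j_{i+1}-1}\le1+\eta$ for large $i$, because $m_{j+1}/m_j\to1$. Multiplying these bounds, $1\le m_{j_{i+1}}/m_{j_i}\le(1+\eta)^2$, where the lower bound holds because $(m_j)$ is increasing. Since $\eta$ was arbitrary, $m_{j_{i+1}}/m_{j_i}\to1$.

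\emph{Consequence.} We have
\[
\frac{a_{j_i}}{a_{j_{i+1}}}=\frac{c+\eps_{j_i}}{c+\eps_{j_{i+1}}}\cdot\frac{m_{j_{i+1}}}{m_{j_i}}\to1.
\]

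The only delicate point is the ratio-control step. Without it, a greedy choice could skip far ahead, and $m_{j_{i+1}}/m_{j_i}$ would not tend to $1$. Minimality is what prevents this: the term just before $j_{i+1}$ still satisfies $a\ge a_{j_i}$, which pins $m_{j_{i+1}-1}$ to within a factor $1+o(1)$ of $m_{j_i}$.
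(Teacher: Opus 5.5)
Your proof is correct, but it takes a different route from the paper's. The paper does not select indices by the values $a_j$. It sets $\eta_i$ to be the maximum of $\sup_{j\ge j_i}|\eps_j|$, $\sup_{j\ge j_i}|m_{j+1}/m_j-1|$ and $1/i^2$, and takes $j_{i+1}$ to be the first index with $m_{j_{i+1}}\ge(1+\sqrt{\eta_i})\,m_{j_i}$. Minimality then gives $1+\sqrt{\eta_i}\le m_{j_{i+1}}/m_{j_i}\le(1+\sqrt{\eta_i})(1+\eta_i)$ directly. Strict decrease of $a_{j_i}$ must be checked separately, using that the multiplicative jump $\sqrt{\eta_i}$ beats the error $\eta_i$. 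That only works for large $i$, so an initial segment is discarded.

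Your greedy ``next strictly smaller term'' rule reverses the roles. Monotonicity is automatic, and the ratio control comes from minimality: $a_{j_{i+1}-1}\ge a_{j_i}$ forces $m_{j_{i+1}-1}/m_{j_i}\le(c+o(1))/(c+o(1))$, and one more step of $m_{j+1}/m_j\to1$ finishes. This avoids the auxiliary scale $\sqrt{\eta_i}$ and any discarding, so it is shorter and more transparent. The paper's version gives an explicit two-sided rate for $m_{j_{i+1}}/m_{j_i}$, but the lemma and its application do not need this.

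One small point to tighten: choose $j_1$ so that $c+\eps_j>0$ (hence $a_j>0$) for all $j\ge j_1$, not merely $a_{j_1}>0$. Otherwise an early step could land on a nonpositive $a_{j_i}$, after which no later $j$ need satisfy $a_j<a_{j_i}$ and the construction stops. Your division by $c+\eps_{j_i}$ in the ratio-control step also uses this positivity.
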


\begin{proof}
Write $a_j=(c+\eps_j)/{m_j}$, where $\eps_j\to 0$. After discarding finitely many terms, we may assume $c+\eps_j>0$ for all $j$. We construct the subsequence inductively starting with $j_1:=1$. Suppose $j_i$ has been chosen. Let
\[
        \eta_i \coloneqq \max\left\{ \sup_{j\ge j_i}|\eps_j|,\ \sup_{j\ge j_i}\left|\frac{m_{j+1}}{m_j}-1\right|,\ \frac{1}{i^2} \right\}.
\]

Clearly, $\eta_i<\infty$. Put $\vartheta_i\coloneqq \sqrt{\eta_i}$. Let $j_{i+1}>j_i$ be the first index such that $m_{j_{i+1}}\ge (1+\vartheta_i)m_{j_i}$. This index exists because $m_j\to\infty$. 

Let us analyse the obtained sequence $(j_i)_{i\ge 1}$.
Since $j_i\ge i$, we have $j_i\to\infty$. Thus
\[
        \sup_{j\ge j_i}|\eps_j|\to 0
        \qquad\text{and}\qquad
        \sup_{j\ge j_i}\left|\frac{m_{j+1}}{m_j}-1\right|\to 0,
\]
by $\eps_j\to0$ and $m_{j+1}/m_j\to1$, respectively. Hence $\eta_i\to 0$.

By the minimality of $j_{i+1}$, we have $m_{j_{i+1}-1}<(1+\vartheta_i)m_{j_i}$. Also, by the definition of $\eta_i$, we have $m_{j_{i+1}} \le (1+\eta_i)m_{j_{i+1}-1}$. Hence
\[
        1+\vartheta_i \le \frac{m_{j_{i+1}}}{m_{j_i}} \le (1+\vartheta_i)(1+\eta_i),
\]
and therefore $m_{j_{i+1}}/m_{j_i}\to 1$.

It remains to prove that the chosen terms are decreasing. Since both $j_i$ and $j_{i+1}$ are at least $j_i$, we have $|\eps_{j_i}|\le \eta_i$ and $|\eps_{j_{i+1}}|\le \eta_i$. Therefore
\[
        a_{j_i}\ge \frac{c-\eta_i}{m_{j_i}}, \qquad\text{and}\qquad a_{j_{i+1}}\le \frac{c+\eta_i}{m_{j_{i+1}}} \le \frac{c+\eta_i}{(1+\sqrt{\eta_i})m_{j_i}}.
\]
For all sufficiently large $i$, we have $c-\eta_i>\frac{c+\eta_i}{1+\sqrt{\eta_i}}$, because this inequality is equivalent to $c\sqrt{\eta_i}>2\eta_i+\eta_i^{3/2}$, which holds when $\eta_i$ is small enough. Hence $a_{j_i}>a_{j_{i+1}}$ for all sufficiently large $i$. Discarding finitely many initial terms, we obtain a strictly decreasing subsequence.

Finally,
\[
        \frac{a_{j_i}}{a_{j_{i+1}}} = \frac{(c+\eps_{j_i})/m_{j_i}}{(c+\eps_{j_{i+1}})/m_{j_{i+1}}} = \frac{m_{j_{i+1}}}{m_{j_i}}(1+o(1))\to 1, 
\]
as desired. 
\end{proof}

Apply Lemma~\ref{lem:extraction} to $a_j\coloneqq \lambda(Q_j)-\alpha$. By Proposition~\ref{prop:Q-lag}, $a_j=(c_0+o(1))/{m_j}$ with $c_0 > 0$. Thus there is a subsequence $j_1<j_2<\cdots$ such that $\lambda(Q_{j_1})>\lambda(Q_{j_2})>\lambda(Q_{j_3})>\cdots$ and ${m_{j_{i+1}}}/{m_{j_i}}\to 1$.

Define $P_i\coloneqq Q_{j_i}$. Then $\lambda(P_i)\downarrow \alpha$, and
\[
        \frac{\lambda(P_i)-\alpha}{\lambda(P_{i+1})-\alpha} = \frac{(c_0+o(1))/m_{j_i}}{(c_0+o(1))/m_{j_{i+1}}} = \frac{m_{j_{i+1}}}{m_{j_i}}(1+o(1))\to 1,
\]
which proves Theorem~\ref{thm:smooth-sequence}.

\section{Filling an interval by the slowly convergent sequence}
\label{sec:recursive-interval}
In this section, we prove Theorem~\ref{thm:interval-3} below, which will be the main stepping stone for proving Theorem~\ref{thm:main}.

\begin{theorem}\label{thm:interval-3} 
There exist reals $\mu< \nu_1$ such that $[\mu,\nu_1]\subseteq \Pi^{(3)}_\infty$.
\end{theorem}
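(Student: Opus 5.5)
We fix $\mu$ and $\nu_1$ as the smallest and largest values of a single recursive construction fed by the sequence $(P_i)$ from Theorem~\ref{thm:smooth-sequence}. Then we show that every number in between is also a value of that construction, by a Kakeya-type greedy argument over infinite subsequences.

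\textbf{The recursive operation.} For a finite $3$-graph $P$ and any $3$-graph $G$, let $P\ast G$ be the disjoint union of $P$ and $G$ together with all triples meeting both $V(P)$ and $V(G)$. A direct computation gives
\[
\lambda(P\ast G)=\Phi_P(\lambda(G)),\qquad \Phi_P(x)\coloneqq\max_{t\in[0,1]}\bigl[1-(1-t)^3(1-\lambda(P))-t^3(1-x)\bigr].
\]
Here $\Phi_P$ is non-decreasing in both $\lambda(P)$ and $x$. For $\lambda(P),x$ near $\alpha$ the maximiser $t$ is near $1/2$, so $\Phi_P$ is a contraction in $x$ with Lipschitz constant about $1/8$.

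To each strictly increasing sequence $\sigma=(\sigma_1<\sigma_2<\cdots)$ of indices we attach the nested graphs
\[
G_\sigma^{(\ell)}\coloneqq P_{\sigma_1}\ast(P_{\sigma_2}\ast(\cdots\ast P_{\sigma_\ell})).
\]
Their Lagrangians converge by the contraction property to a value $V(\sigma)$, which is the fixed limit of $\Phi_{P_{\sigma_1}}\circ\Phi_{P_{\sigma_2}}\circ\cdots$.

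Taking blow-ups of $G^{(\ell)}_\sigma$ with class sizes proportional to an optimal weighting gives graphs $G_n$ with $v(G_n)\to\infty$ and $\varrho(G_n)\to V(\sigma)$. Their Lagrangians also tend to $V(\sigma)$, since blowing up does not change the Lagrangian. Hence Theorem~\ref{thm:tight} gives $V(\sigma)\in\Pi^{(3)}_\infty$. Because $\Pi^{(3)}_\infty$ is closed, limits of such values are also included.

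\textbf{Choice of $\mu$ and $\nu_1$.} Since the tail of $(P_i)$ still satisfies Theorem~\ref{thm:smooth-sequence}, we may discard finitely many initial $P_i$ so that the quantitative condition in the last step holds from $i=1$ on. Set $\nu_1\coloneqq V(1,2,3,\ldots)$, the value for the full sequence. Set $\mu\coloneqq\lim_{i\to\infty}V(i,i+1,\ldots)$, which is the fixed point $x^\ast$ of $x=\Phi_\alpha(x)$ because $\lambda(P_i)\downarrow\alpha$.

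Monotonicity gives $V(\sigma)\in[\mu,\nu_1]$ for every $\sigma$. Also $\mu<\nu_1$, because $\lambda(P_1)>\alpha$ and $\Phi$ is strictly increasing in $\lambda(P)$ at the relevant weights.

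\textbf{Filling the interval (main obstacle).} Given a target $x\in[\mu,\nu_1]$, build $\sigma$ greedily. Take $\sigma_1$ to be the smallest index $i$ for which $\Phi_{P_i}$ applied to the maximal continuation, namely the value of the tail $(i+1,i+2,\ldots)$, still reaches $x$. Then recurse on the inner target $\Phi_{P_{\sigma_1}}^{-1}(x)$, intersected with the range of available tails.

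The key estimate is the analogue of Kakeya's condition $a_i\le\sum_{j>i}a_j$. Skipping index $i$ must cost no more than the total gain available from the indices after it. Concretely, we need
\[
V(i,i+1,\ldots)-V(i+1,i+2,\ldots)\ \le\ V(i+1,i+2,\ldots)-\mu .
\]
To first order, the left side is $\partial_1\Phi\cdot(\lambda(P_i)-\lambda(P_{i+1}))$. The right side is about $\partial_1\Phi\cdot(\lambda(P_{i+1})-\alpha)/(1-\partial_2\Phi)$. The ratio condition $(\lambda(P_i)-\alpha)/(\lambda(P_{i+1})-\alpha)\to1$ makes $\lambda(P_i)-\lambda(P_{i+1})=o(\lambda(P_{i+1})-\alpha)$, so the inequality holds for all large $i$, and hence for all $i$ after the discarding above.

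I expect the real work to lie in making this first-order comparison rigorous. One has to control the variation of the maximiser $t$, and hence of the derivatives of $\Phi$, uniformly along the recursion, via Lemma~\ref{lem:perturb}-type arguments. One also has to check that the greedy procedure yields $|V(\sigma)-x|=0$ in the limit, using the contraction factor at most about $1/8$ at each level. Once that holds, every $x\in[\mu,\nu_1]$ equals some $V(\sigma)\in\Pi^{(3)}_\infty$.
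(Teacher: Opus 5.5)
\textbf{The recursion you chose has no fixed point below $1$.} This is a fatal gap.

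For your join $P\ast G$, all triples meeting both sides are edges, including those with two vertices on one side. The optimisation in $\Phi_P$ can then be solved exactly. Put $a=1-\lambda(P)$ and $b=1-x$. Then $(1-t)^3a+t^3b$ is minimised at $t=\sqrt a/(\sqrt a+\sqrt b)$, with minimum value $ab/(\sqrt a+\sqrt b)^2$. Hence
\[
\frac{1}{\sqrt{1-\Phi_P(x)}}=\frac{1}{\sqrt{1-\lambda(P)}}+\frac{1}{\sqrt{1-x}} .
\]
In the coordinate $u=(1-x)^{-1/2}$, each level is therefore a translation by $(1-\lambda(P_{\sigma_i}))^{-1/2}>k$. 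It follows that $\Phi_P(x)>x$ for every $x<1$, and $x=\Phi_\alpha(x)$ has no solution below $1$. Every infinite $\sigma$ gives $V(\sigma)=1$, so your $\mu$ and $\nu_1$ both equal $1$ and the interval is degenerate.

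The contraction factor of about $1/8$ holds only locally near $\alpha$. But $\Phi_\alpha(\alpha)=1-(1-\alpha)/4$, so the orbit leaves that region and runs off to $1$. There the maximiser satisfies $t\to 1$ and $\partial_x\Phi=t^3\to 1$. This is the same phenomenon as in Lemma~\ref{lem:complete-join}: joining copies pushes densities to $1$.

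\textbf{What is missing, and further repairs.} You need a recursive pattern with an attracting fixed point strictly below $1$. The paper puts the recursion and $P_{a_i}$ into two parts of a blow-up of $K_m^{(3)}$, adding only triples that meet each part in at most one vertex. So the $2+1$ triples are absent and $m-2$ parts stay empty. Then $\mu=\max_{\mathbf z}(g_m(\mathbf z)+\alpha z_2^3)/(1-z_1^3)<1$. A fixed maximiser $\mathbf z^*$ gives a genuine contraction factor $q=(z_1^*)^3<1$ and a sensitivity $R=(z_2^*)^3>0$.

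Even with such a pattern, three parts of your outline would need repair:
\begin{itemize}
\item The greedy step needs the overlap condition $\Phi_{P_i}(\mu)\le V(i+1,i+2,\ldots)$, comparing the smallest value with $i$ used against the largest value with $i$ skipped. Your additive Kakeya-type inequality does not imply this once the maps contract.
\item First-order derivatives of a maximum need not exist when maximisers are non-unique. The paper handles this by choosing $\mathbf z^*$ with $z_2^3$ maximal and proving the one-sided bound of Lemma~\ref{lem:one-step}.
\item The identity $\lambda(P\ast G)=\Phi_P(\lambda(G))$ is false for finite graphs, because repeated vertices contribute negative terms. For example, if $P$ and $G$ are single vertices, then $P\ast G$ has no edges and Lagrangian $0$, whereas $\Phi_P(0)=3/4$. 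The identity holds only in the blow-up limit. So your values must be realised through blow-ups at every level, or via Lemma~\ref{lem:A-config}, not as the Lagrangians of $G^{(\ell)}_\sigma$ themselves.
\end{itemize}
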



Informally speaking, our proof strategy is to consider blowups of $K_m^{(3)}$ for some fixed large $m$ where we apply recursion inside the first part and, at each recursion level $i$, can insert the $3$-graph $P_{a_i}$ returned by Theorem~\ref{thm:smooth-sequence} into the second part of the blowup. Each choice of $a_1<a_2<a_3<\dots$ gives some element of $\Pi^{(3)}_\infty$ in the limit as the number of vertices in the construction tends to infinity. If we decrease some $a_i$ then this value increases. So, for a given real $y$ in the interval that we aim to cover, we choose $a_i$ inductively for $i=1,2,3,\dots$, with $a_{i}$ being the smallest index above $a_{i-1}$ such that the final value can stay below~$y$. In the other direction, the smoothness of the sequence $(\lambda(P_i))_{i\ge 1}$ will guarantee that these values necessarily approach $y$.

Let us provide all formal details. Let $(P_i)_{i\ge 1}$ be the sequence of $3$-graphs returned by Theorem~\ref{thm:smooth-sequence}.
Recall that $\alpha=1 - 1/k^2$ is the limit of $\lambda(P_i)$.
Choose an integer $m\ge 4$ such that
\begin{equation}
        \lambda(K_{m-1}^{(3)})=\frac{(m-2)(m-3)}{(m-1)^2}>\alpha .
        \label{eq:m-choice}
\end{equation}
This is possible because $\lambda(K_s^{(3)})=(s-1)(s-2)/s^2\to 1$ as $s\to\infty$. In particular, straightforward calculations show that $m\ge k$.
Write
\[
        H_i\coloneqq P_i, \qquad \beta_i\coloneqq \lambda(H_i), \qquad \eps_i\coloneqq \beta_i-\alpha .
\]
Thus
\begin{equation}
        \eps_i>0, \qquad \eps_i\downarrow 0, \qquad\text{and}\qquad \frac{\eps_i}{\eps_{i+1}}\to 1.
        \label{eq:eps-slow}
\end{equation}

\begin{construction}[$A$-configurations]\label{con:A-config} 
Let $A=\{a_1<a_2<\cdots\}\subseteq\N$ be an infinite sequence. An $A$-configuration is obtained recursively as follows. Partition the current vertex set into $m$ parts; add all triples meeting each part in at most one vertex; put a blow-up of $H_{a_1}$ inside the second part; and continue recursively inside the first part with the new infinite sequence $A\setminus\{a_1\}$. 
\end{construction}

We next encode the limiting densities of these recursive configurations by one-variable maps. The first part carries the recursive tail, the second part carries one of the graphs $H_i$, and the remaining parts contribute only through the complete cross-part triples.

For every $\mathbf{z} = (z_1, \ldots, z_m) \in \mathbb{R}^{m}$, let
\[
        g_m(\mathbf{z})
        \coloneqq P_{K_m^{(3)}}(\mathbf{z})
        = 6\sum_{\{a,b,c\}\in\binom{[m]}{3}}z_az_bz_c .
\]
For $b,t\in[0,1]$, define
\begin{equation}
        \psi_b(t)\coloneqq \max_{\mathbf{z}\in\Delta_{m-1}} \bigl\{ g_m(\mathbf{z})+t z_1^3+bz_2^3\bigr\}.
        \label{eq:psi-def}
\end{equation}
\begin{fact}\label{fact:psi-basic}
For each fixed $b\in[0,1]$, the map $t\mapsto \psi_b(t)$ is non-decreasing and $1$-Lipschitz. For each fixed $t\in[0,1]$, the map $b\mapsto \psi_b(t)$ is also non-decreasing and $1$-Lipschitz. That is,
\[
        |\psi_b(t)-\psi_b(t')|\le |t-t'| \qquad\text{and}\qquad
        |\psi_b(t)-\psi_{b'}(t)|\le |b-b'|
\]
for all $b,b',t,t'\in[0,1]$.
Moreover, $\psi_b(t)\ge t$ for every $t\in[0,1]$.
\end{fact}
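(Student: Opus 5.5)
The plan is to argue pointwise in $\mathbf{z}$ and then pass to the maximum. For fixed $\mathbf{z}\in\Delta_{m-1}$, write
\[
F_{b,t}(\mathbf{z})\coloneqq g_m(\mathbf{z})+tz_1^3+bz_2^3 .
\]
This expression is affine in $t$ with slope $z_1^3\in[0,1]$, and affine in $b$ with slope $z_2^3\in[0,1]$.

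For monotonicity, suppose $t\le t'$. Then $F_{b,t}(\mathbf{z})\le F_{b,t'}(\mathbf{z})$ for every $\mathbf{z}$, because $z_1^3\ge0$. Taking maxima over the compact set $\Delta_{m-1}$ gives $\psi_b(t)\le\psi_b(t')$. The argument in $b$ is identical, using $z_2^3\ge 0$.

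For the Lipschitz bound, note that for every $\mathbf{z}$,
\[
F_{b,t'}(\mathbf{z})\le F_{b,t}(\mathbf{z})+|t-t'|z_1^3\le F_{b,t}(\mathbf{z})+|t-t'|,
\]
since $0\le z_1\le 1$. Maximizing both sides yields $\psi_b(t')\le\psi_b(t)+|t-t'|$. Exchanging the roles of $t$ and $t'$ gives the two-sided bound. This is the standard fact that a supremum of $1$-Lipschitz functions is $1$-Lipschitz. The same estimate with $z_2^3\le1$ handles $b$.

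For the final claim $\psi_b(t)\ge t$, evaluate at the vertex $\mathbf{z}=(1,0,\ldots,0)\in\Delta_{m-1}$. There $g_m(\mathbf{z})=0$, since every monomial $z_az_bz_c$ has three distinct indices and so contains a zero coordinate. Also $bz_2^3=0$ and $tz_1^3=t$. Hence $\psi_b(t)\ge t$.

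There is no real obstacle here. The only points needing care are that the maximum in \eqref{eq:psi-def} is attained, which holds because $\Delta_{m-1}$ is compact and the function is continuous, and that the slopes $z_1^3$ and $z_2^3$ lie in $[0,1]$ on the simplex.
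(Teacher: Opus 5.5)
Your proof is correct and follows the paper's argument essentially verbatim. You compare the expressions pointwise in $\mathbf{z}$ using that the coefficients $z_1^3,z_2^3$ lie in $[0,1]$, pass to the maximum over $\Delta_{m-1}$, and evaluate at $\mathbf{z}=(1,0,\ldots,0)$ to get $\psi_b(t)\ge t$.
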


\begin{proof}
For fixed $\mathbf{z}\in\Delta_{m-1}$, changing $t$ to $t'$ changes the expression inside the maximum by $(t-t')z_1^3$, whose absolute value is at most $|t-t'|$ because $0\le z_1^3\le 1$. Taking the maximum over $\mathbf{z}$ gives the $1$-Lipschitz bound. The same observation also shows monotonicity in $t$. The proof for the variable $b$ is identical, using $(b-b')z_2^3$ instead. Finally, taking $\mathbf{z}=(1,0,\ldots,0)$ gives $\psi_b(t)\ge t$.
\end{proof}

We next define the base value of the constant-$\alpha$ recursion. For $a\in[0,1]$, put
\begin{equation}
        \mu(a)\coloneqq
        \max_{\substack{\mathbf{z}\in\Delta_{m-1}\\ z_1<1}}
        \frac{g_m(\mathbf{z})+az_2^3}{1-z_1^3}.
        \label{eq:mu-a-def}
\end{equation}
Recall that $\alpha=1-1/k^2$ and let 
\[
        \mu\coloneqq \mu(\alpha).
\]

Let us informally explain the meaning of $\mu$. Consider the recursive pattern based on $K_m^{(3)}$: the first part carries another copy of the same recursive pattern, while the second part carries a non-recursive pattern of Lagrangian $\alpha$. Let $\mu$ be the maximum Lagrangian that we can achieve. It satisfies
\begin{equation}
        g_m(\mathbf{z})+\mu z_1^3+\alpha z_2^3\le \mu \qquad\text{for all } \mathbf{z}\in\Delta_{m-1}.
        \label{eq:mu-ineq}
\end{equation}
Indeed, if $\mathbf{z}$ records the part ratios on the top level of a recursive construction $G$ with $n\to\infty$ vertices and we make optimal choices within the first and second parts then, up to an additive error term $o(1)$,  $\varrho(G)$ is the sum of $g_m(\mathbf{z})$ (the Lagrangian polynomial of $K_m^{(3)}$), $\mu z_1^3$ and $\alpha z_2^3$ (the contributions from the optimal constructions within the first and second parts respectively). On the other hand, the edge density of $G$ is at most $\mu+o(1)$ by the maximality of $\mu$. This explains the inequality in~\eqref{eq:mu-ineq}.  As we will need to refer to this inequality later, let us remark that an easy formal calculation shows that our definition of $\mu:=\mu(\alpha)$ via~\eqref{eq:mu-a-def} is equivalent to it being the smallest real satisfying~\eqref{eq:mu-ineq}.

Let $\mathcal X$ be the set of maximizers in \eqref{eq:mu-a-def} with $a=\alpha$, that is, 
\begin{align*}
    \mathcal X
    \coloneqq 
    \left\{\mathbf{z} = (z_1, \ldots, z_m) \in \Delta_{m-1} \colon z_1 < 1,\ \frac{g_m(\mathbf{z}) + \alpha z_2^3}{1-z_1^3} = \mu \right\}. 
\end{align*}

\begin{lemma}\label{lem:mu-basic}
The function $a\mapsto \mu(a)$ is continuous on $[0,1]$ and satisfies $0\le \mu(a)\le 1$. Moreover,
\begin{equation}
        \mu\ge \lambda(K_{m-1}^{(3)})>\alpha .
        \label{eq:mu-greater-alpha}
\end{equation}
Let $\mathbf{e}_1 \coloneqq (1,0,\ldots,0)$. The set $\mathcal X$ is non-empty and compact, and there is a constant $\rho>0$ such that
\[
        \|\mathbf{z}-\mathbf{e}_1\|_2\ge \rho \qquad\text{for all } \mathbf{z}\in\mathcal X.
\]
If $\mathbf{z}^*\in\mathcal X$ is chosen so that $(z_2^*)^3$ is maximal
and we let
\begin{equation}
        q\coloneqq (z_1^*)^3\qquad\text{and}\qquad  R\coloneqq (z_2^*)^3,
        \label{eq:qR-def}
\end{equation}
then
\begin{equation}
        0<q<1 \qquad\text{and}\qquad R>0.
        \label{eq:qR-positive}
\end{equation}
\end{lemma}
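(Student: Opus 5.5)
The plan is to handle the claims about $\mu(\cdot)$ directly from the definition~\eqref{eq:mu-a-def}, and the claims about $\mathcal X$ by a compactness argument near the point $\mathbf{e}_1$.

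\emph{Bounds and the lower estimate.} For $0\le\mu(a)\le1$, use the identity
\[
1-z_1^3=\Bigl(\sum_i z_i\Bigr)^3-z_1^3\ge g_m(\mathbf{z})+z_2^3\ge g_m(\mathbf{z})+az_2^3 .
\]
This holds because $(\sum z_i)^3$ expands into $g_m(\mathbf{z})$, the cubes $\sum_i z_i^3$, and non-negative mixed terms. Hence the ratio in~\eqref{eq:mu-a-def} is at most $1$, and it is clearly non-negative. For~\eqref{eq:mu-greater-alpha}, take $z_1=0$ and let $(z_2,\dots,z_m)$ be the uniform vector on $m-1$ coordinates. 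The ratio is then at least $g_m(\mathbf{z})=\lambda(K_{m-1}^{(3)})$, which exceeds $\alpha$ by~\eqref{eq:m-choice}.

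\emph{Continuity of $\mu(\cdot)$.} The sup is taken over the non-compact set $\{z_1<1\}$, so the real work is to show that the supremum is attained away from $\mathbf{e}_1$, uniformly in $a$. Near $\mathbf{e}_1$, write $z_1=1-\varepsilon$ with $\varepsilon$ small. Then $1-z_1^3=3\varepsilon+O(\varepsilon^2)$, while the numerator is at most $g_m(\mathbf{z})+z_2^3$.

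Every term of $g_m$ contains at least two coordinates other than $z_1$, and $\sum_{i\ge2}z_i=\varepsilon$. Hence $g_m(\mathbf{z})\le 6\sum_{b<c,\,b,c\ge2} z_bz_c\le 3\varepsilon^2$, and also $z_2^3\le\varepsilon^3$. So the ratio is $O(\varepsilon)$, uniformly in $a\in[0,1]$.

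Since $\mu(a)\ge \mu(0)\ge\lambda(K_{m-1}^{(3)})=:c_1>0$, there is $\rho>0$ such that the ratio is below $c_1/2$ whenever $\|\mathbf{z}-\mathbf{e}_1\|_2<\rho$. Therefore $\mu(a)$ equals the maximum over the compact set $K_\rho=\{\mathbf{z}\in\Delta_{m-1}:\|\mathbf{z}-\mathbf{e}_1\|_2\ge\rho\}$. On $K_\rho$ the denominator is bounded below, say by $\delta>0$. It follows that $|\mu(a)-\mu(a')|\le|a-a'|/\delta$, which gives continuity, indeed Lipschitz continuity.

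\emph{Structure of $\mathcal X$.} The same argument shows that every maximizer lies in $K_\rho$, which gives the stated separation from $\mathbf{e}_1$. Non-emptiness follows because a continuous function attains its maximum on the compact set $K_\rho$. For compactness, note that $\mathcal X$ is the set of points of $K_\rho$ where a continuous function equals its maximum, so it is closed in a compact set. Since $\mathcal X$ is compact and $z\mapsto z_2^3$ is continuous, $\mathbf{z}^*$ exists.

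\emph{Proof of~\eqref{eq:qR-positive}.} The bound $q<1$ holds since $z_1^*<1$. For $q>0$ and $R>0$, I would argue by perturbation, and I expect this to be the main obstacle.

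Suppose first that $z_1^*=0$. Then $\mu=\max\{g_m(\mathbf{z})+\alpha z_2^3\}$ over $\mathbf{z}$ with $z_1=0$. I would move a small mass $t$ onto coordinate $1$ and compare with the condition~\eqref{eq:mu-ineq}, namely that $F(\mathbf{z}):=g_m(\mathbf{z})+\mu z_1^3+\alpha z_2^3-\mu\le0$ with equality exactly on $\mathcal X$.

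Take the perturbation $\mathbf{z}_t=(t,(1-t)\mathbf{z}')$. Then $g_m(\mathbf{z}_t)=(1-t)^3g_m(\mathbf{z}')+3t(1-t)^2\,Q(\mathbf{z}')$, where $Q(\mathbf{z}')=2\sum_{b<c}z'_bz'_c$. Since $\mathbf{z}'$ is not concentrated on one coordinate, the first-order change is
\[
3t\bigl(Q(\mathbf{z}')-\mu\bigr)+O(t^2).
\]
It remains to show $Q(\mathbf{z}')>\mu$. The optimal $\mathbf{z}'$ spreads over $m-1\ge3$ coordinates with $z_2'$ small, so $Q(\mathbf{z}')$ is close to $1-\sum_i z_i'^2$, which should exceed $\mu$. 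If this is delicate, I would instead argue via the first-order (KKT) conditions: at the maximizer, the partial derivative with respect to $z_1$ must not exceed the Lagrange multiplier, and I would derive a contradiction from that.

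Similarly, if $z_2^*=0$, then coordinate $2$ is symmetric with coordinates $3,\dots,m$. Shifting mass between coordinate $2$ and a coordinate $i\ge3$ changes nothing to first order, so $\mathbf{z}$ with $z_2$ and $z_i$ swapped also lies in $\mathcal X$. If that swapped point has $z_2>0$, this contradicts the maximality of $R$. The remaining case is that all of $z_2,\dots,z_m$ vanish, which is impossible because $\mathbf{z}\neq\mathbf{e}_1$.
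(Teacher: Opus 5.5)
Your arguments for $0\le\mu(a)\le 1$, for \eqref{eq:mu-greater-alpha}, for continuity, and for the structure of $\mathcal X$ (non-emptiness, compactness, separation from $\mathbf e_1$) are correct and follow the paper. Your uniform $O(\varepsilon)$ bound near $\mathbf e_1$ spells out the paper's continuous-extension step.

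\textbf{The gap is in proving $q>0$.} Your perturbation reduces the claim to $Q(\mathbf z')>\mu$, but you never prove this inequality. Since $\sum_i z_i'=1$ and $\mu=g_m(\mathbf z^*)+\alpha(z_2')^3$ when $z_1^*=0$, one has $Q(\mathbf z')-\mu=2\sum_{i\ne j}(z_i')^2z_j'-\alpha(z_2')^3$. This is negative when $\mathbf z'$ is concentrated near coordinate $2$. So its sign depends on information about $\mathbf z'$ that you only assert heuristically (``spreads over $m-1$ coordinates with $z_2'$ small''). You also assume without justification that $\mathbf z'$ is not concentrated on one coordinate, and the KKT alternative is only announced.

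The missing idea is that $g_m$ is symmetric in all coordinates, which the paper uses through a finite swap. If $z_1^*=0$ and $z_h^*>0$ for some $h\ge 3$, exchange coordinates $1$ and $h$. This leaves $g_m$ and $z_2$ unchanged and raises the left-hand side of \eqref{eq:mu-ineq} by $\mu(z_h^*)^3>0$, a contradiction. If no such $h$ exists, then $\mathbf z^*=\mathbf e_2$, whose ratio is $\alpha<\mu$. Your KKT route can also be finished: for $h\ge 3$ in the support, $\partial_h g_m=3Q(\mathbf z')-6z_h^*(1-z_h^*)$, while $\partial_1 g_m=3Q(\mathbf z')$. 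Optimality then forces $z_h^*=1$, i.e.\ $\mathbf z^*=\mathbf e_h$ with ratio $0$. But this is exactly the step you left open.

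\textbf{Your argument for $R>0$ reaches the right conclusion through a false step.} When $z_2^*=0<z_i^*$, the swapped point is not in $\mathcal X$. The swap leaves $g_m$ and $z_1$ unchanged but raises $\alpha z_2^3$ from $0$ to $\alpha(z_i^*)^3$. So the swapped point violates \eqref{eq:mu-ineq}, i.e.\ it beats the maximum defining $\mu$. It is also false that shifting mass ``changes nothing to first order'': moving mass $s$ from coordinate $i$ to coordinate $2$ increases $g_m$ by $6s\,z_i^*(1-z_i^*)+O(s^2)$. The correct one-line conclusion is the direct contradiction with \eqref{eq:mu-ineq}, as in the paper. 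In particular, every point of $\mathcal X$ has $z_1,z_2>0$, so the maximality of $(z_2^*)^3$ plays no role in \eqref{eq:qR-positive}.
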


\begin{proof}
The quotient in \eqref{eq:mu-a-def} extends continuously to $\mathbf{z}=\mathbf{e}_1$ by assigning value $0$: indeed, if $1-z_1=\delta$, then $g_m(\mathbf{z})=O(\delta^2)$ and $z_2^3=O(\delta^3)$, while $1-z_1^3\ge \delta$. Moreover, $g_m(\mathbf{z})+a z_2^3\le 1-z_1^3$ for $a\in[0,1]$, since the right-hand side is the ordered weight of all triples of parts not entirely in part $1$, allowing repetitions. Hence $0\le \mu(a)\le 1$. Since the extended quotient is continuous in $(a,\mathbf{z})$ on the compact set $[0,1]\times\Delta_{m-1}$, its maximum over $\mathbf{z}\in\Delta_{m-1}$ is continuous in $a$. 
By \eqref{eq:m-choice}, taking $z_1=0$ and distributing the mass equally among the coordinates $2,\ldots,m$ gives \eqref{eq:mu-greater-alpha}. 

The set $\mathcal X$ is non-empty and compact by the continuous extension described above. Since the quotient tends to $0$ at $\mathbf{e}_1$, whereas $\mu>0$ by \eqref{eq:mu-greater-alpha}, there is a constant $\rho>0$ such that $\|\mathbf{z}-\mathbf{e}_1\|_2\ge\rho$ for every $\mathbf{z}\in\mathcal X$. Thus $q<1$. If $z^*_1=0$, then either some coordinate $z^*_h>0$ with $h\ge 3$, in which case swapping coordinates $1$ and $h$ leaves $g_m$ and $z_2$ unchanged but increases the left-hand side of \eqref{eq:mu-ineq} by $\mu (z_h^*)^3>0$, or all mass lies on coordinate $2$, in which case \eqref{eq:mu-a-def} gives $\mu=\alpha$, contradicting \eqref{eq:mu-greater-alpha}. Thus $q>0$. Similarly, if $z^*_2=0$, then either some coordinate $h\ge 3$ has positive mass, or $\mathbf{z}^*=\mathbf{e}_1$. The second possibility cannot occur because $\mathbf{z}^*\in\mathcal X$ and every point of $\mathcal X$ has distance at least $\rho$ from $\mathbf{e}_1$. In the first case, swapping coordinates $2$ and $h$ leaves $g_m$ and $z_1$ unchanged but increases the left-hand side of \eqref{eq:mu-ineq} by $\alpha (z_h^*)^3>0$, a contradiction. Hence $R>0$.
\end{proof}

For the rest of this section, fix $\mathbf{z}^*\in\mathcal X$ and the corresponding constants $q$ and $R$ as in Lemma~\ref{lem:mu-basic}.

We shall need the following one-step estimate.

\begin{lemma}\label{lem:one-step} As $\eta\to 0^+$, we have $\psi_{\alpha+\eta}(\mu)\le \mu+R\eta+o(\eta)$. \end{lemma}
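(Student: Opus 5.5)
Our approach is an envelope-type (Danskin) argument for the maximum defining $\psi_{\alpha+\eta}(\mu)$. For each $\eta>0$ small, let $\mathbf{z}^\eta\in\Delta_{m-1}$ be a maximizer of $g_m(\mathbf{z})+\mu z_1^3+(\alpha+\eta)z_2^3$. By \eqref{eq:mu-ineq},
\[
\psi_{\alpha+\eta}(\mu)=g_m(\mathbf{z}^\eta)+\mu (z^\eta_1)^3+\alpha (z^\eta_2)^3+\eta (z^\eta_2)^3\le \mu+\eta (z^\eta_2)^3 .
\]
It therefore suffices to show that $\limsup_{\eta\to0^+}(z^\eta_2)^3\le R$, i.e.\ that every limit point of $\mathbf{z}^\eta$ has third power of its second coordinate at most $R$.

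Let $\mathbf{z}^0$ be any limit point of $\mathbf{z}^{\eta}$ along a sequence $\eta\to0^+$. By continuity and by $\psi_{\alpha+\eta}(\mu)\ge\psi_\alpha(\mu)$, the point $\mathbf{z}^0$ maximizes $F(\mathbf{z}):=g_m(\mathbf{z})+\mu z_1^3+\alpha z_2^3$. We next identify $\max F$: from \eqref{eq:mu-ineq}, $\max F\le\mu$, while $\mathbf{e}_1$ gives $F(\mathbf{e}_1)=\mu$. Hence $\max F=\mu$, and the maximizers of $F$ are exactly $\mathbf{e}_1$ together with the points $\mathbf{z}$ with $z_1<1$ and $g_m(\mathbf{z})+\alpha z_2^3=\mu(1-z_1^3)$, i.e.\ the set $\mathcal X$. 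If $\mathbf{z}^0\in\mathcal X$, then $(z^0_2)^3\le R$ by the choice of $\mathbf{z}^*$ in Lemma~\ref{lem:mu-basic}. If $\mathbf{z}^0=\mathbf{e}_1$, then $z^0_2=0\le R$. In both cases $\limsup (z_2^\eta)^3\le R$, which gives $\psi_{\alpha+\eta}(\mu)\le\mu+\eta(R+o(1))=\mu+R\eta+o(\eta)$.

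The main subtlety is exactly this identification of the maximizer set of $F$: the point $\mathbf{e}_1$ is a degenerate maximizer, excluded from $\mathcal X$, and one must check that it does no harm. It is harmless here because its second coordinate vanishes. Apart from this, one only needs compactness of $\Delta_{m-1}$ and continuity of $\mathbf{z}\mapsto z_2^3$ to pass from the maximizer set to the limsup, which is the standard argument. Note also that $R$ is well defined as a maximum because $\mathcal X$ is compact, as established in Lemma~\ref{lem:mu-basic}.
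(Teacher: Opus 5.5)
Your proof is correct and follows essentially the same route as the paper: bound $\psi_{\alpha+\eta}(\mu)$ by $\mu+\eta (z_2^\eta)^3$ using \eqref{eq:mu-ineq}, then use compactness to show that limit points of the maximizers are equality points of \eqref{eq:mu-ineq}, i.e.\ lie in $\mathcal X\cup\{\mathbf{e}_1\}$. The only differences are cosmetic. You bound the $\limsup$ directly and dispose of $\mathbf{e}_1$ because $z_2=0$ there, whereas the paper argues by contradiction and excludes $\mathbf{e}_1$ because the limit point would have $(z_2^0)^3\ge R+\delta>0$.
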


\begin{proof}
Let $\mathbf{z}^{(\eta)}=(z_1^{(\eta)},\ldots,z_m^{(\eta)})$ be a maximizer in \eqref{eq:psi-def} with $(b,t)=(\alpha+\eta,\mu)$. Then
\begin{equation}
        \psi_{\alpha+\eta}(\mu)-\mu = \bigl(g_m(\mathbf{z}^{(\eta)})+\mu (z_1^{(\eta)})^3+\alpha (z_2^{(\eta)})^3-\mu\bigr) +\eta (z_2^{(\eta)})^3.
        \label{eq:one-step-expand}
\end{equation}
The bracketed term is non-positive by \eqref{eq:mu-ineq}. Also, by using the fixed vector $\mathbf{z}^*$ as a test vector in \eqref{eq:psi-def}, we have
\begin{equation}
        \psi_{\alpha+\eta}(\mu)-\mu \ge g_m(\mathbf{z}^*)+\mu (z_1^*)^3+(\alpha+\eta)(z_2^*)^3-\mu =R\eta.
        \label{eq:one-step-lower}
\end{equation}
Suppose that the asserted upper bound fails. Then there are $\delta>0$ and a sequence $\eta_n\to 0^+$ such that
\[
        \psi_{\alpha+\eta_n}(\mu)-\mu>(R+\delta)\eta_n.
\]
Put $\mathbf{z}^{(n)}\coloneqq \mathbf{z}^{(\eta_n)}=(z_1^{(n)},\ldots,z_m^{(n)})$. By \eqref{eq:one-step-expand}, this forces $(z_2^{(n)})^3>R+\delta$ for all large $n$. Moreover, the bracketed term in \eqref{eq:one-step-expand} tends to $0$: it is always non-positive, while \eqref{eq:one-step-lower} and $(z_2^{(n)})^3\le 1$ give
\[
        0\le \mu-g_m(\mathbf{z}^{(n)})-\mu (z_1^{(n)})^3-\alpha (z_2^{(n)})^3 \le \eta_n (z_2^{(n)})^3-R\eta_n \le \eta_n.
\]
Since $\Delta_{m-1}$ is compact, after passing to a subsequence we may assume that $\mathbf{z}^{(n)}$ converges to some point $\mathbf{z}^0\in\Delta_{m-1}$. Then equality holds in \eqref{eq:mu-ineq} at $\mathbf{z}^0$, and $(z_2^0)^3\ge R+\delta$. In particular, $\mathbf{z}^0\ne\mathbf{e}_1$, so $\mathbf{z}^0\in\mathcal X$, contradicting the choice of $R=\max_{\mathbf{z}\in\mathcal X}z_2^3$.
\end{proof}

For an infinite sequence $b=(b_1,b_2,\ldots)$ with $b_i\in[0,1]$, the Lagrangian value of the $s$-level truncation of the recursive pattern is
\[
        \Lambda_s(b)\coloneqq \psi_{b_1}\circ\psi_{b_2}\circ\cdots\circ\psi_{b_s}(0),
\]
where the maps are composed from right to left.  Equivalently, for fixed $s$, set $t_s\coloneqq 0$ and define recursively
\[
        t_{\ell-1}\coloneqq \psi_{b_\ell}(t_\ell) \qquad \text{for } \ell=s,s-1,\ldots,1.
\]
Then $\Lambda_s(b)=t_0$. 
Define the infinite recursive value by
\begin{equation}
        \Lambda(b)\coloneqq \lim_{s\to\infty}\Lambda_s(b).
        \label{eq:Lambda-def}
\end{equation}
The limit exists because inserting one more level can only increase the value, while all values are at most $1$. Indeed, since $b,t\le 1$ and $\sum_{i\in[m]} z_i=1$, for all $b,t\in[0,1]$ and $\mathbf{z}\in\Delta_{m-1}$ we have 
\[
        g_m(\mathbf{z})+tz_1^3+bz_2^3
        \le g_m(\mathbf{z})+z_1^3+z_2^3
        \le \Big(\sum_{i\in [m]} z_i\Big)^3=1.
\]

\begin{lemma}\label{lem:alpha-tail-value}
The constant-$\alpha$ recursive value is $\mu$, that is, $\Lambda(\alpha,\alpha,\alpha,\ldots)=\mu$. 
\end{lemma}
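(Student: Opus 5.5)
We need to show that $\Lambda(\alpha,\alpha,\ldots)=\lim_s t_0^{(s)}$ equals $\mu$, where $t^{(s)}_0=\psi_\alpha^{\,s}(0)$, the $s$-fold iterate of $\psi_\alpha$ applied to $0$. The plan is to prove an upper bound $\Lambda\le\mu$ by monotonicity, and then a matching lower bound by showing that the limit is a fixed point of $\psi_\alpha$ which satisfies the defining inequality of $\mu$.

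\emph{Upper bound.} By \eqref{eq:mu-ineq}, for every $\mathbf z\in\Delta_{m-1}$ we have $g_m(\mathbf z)+\mu z_1^3+\alpha z_2^3\le\mu$, so $\psi_\alpha(\mu)\le\mu$. Since $\psi_\alpha$ is non-decreasing (Fact~\ref{fact:psi-basic}) and $0\le\mu$, induction gives $\psi_\alpha^{\,s}(0)\le\psi_\alpha^{\,s}(\mu)\le\mu$ for all $s$. Hence $L\coloneqq\Lambda(\alpha,\alpha,\ldots)\le\mu$. Here the limit exists and is at most $1$, as noted after \eqref{eq:Lambda-def}.

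\emph{Fixed point.} The map $\psi_\alpha$ is $1$-Lipschitz, hence continuous, so passing to the limit in $t_{s+1}=\psi_\alpha(t_s)$ gives $\psi_\alpha(L)=L$. Consequently $g_m(\mathbf z)+Lz_1^3+\alpha z_2^3\le L$ for every $\mathbf z\in\Delta_{m-1}$. This says that $L$ satisfies \eqref{eq:mu-ineq} in place of $\mu$.

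\emph{Lower bound.} The paper remarks that $\mu=\mu(\alpha)$ is the smallest real satisfying \eqref{eq:mu-ineq}, which immediately gives $L\ge\mu$. To be self-contained, I would verify this directly. Take any $\mathbf z\in\Delta_{m-1}$ with $z_1<1$. The inequality above rearranges to $g_m(\mathbf z)+\alpha z_2^3\le L(1-z_1^3)$, that is,
\[
\frac{g_m(\mathbf z)+\alpha z_2^3}{1-z_1^3}\le L .
\]
Maximizing over such $\mathbf z$ gives $\mu\le L$. Combined with the upper bound, $L=\mu$.

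The only mildly delicate point is the fixed-point passage. It needs continuity of $\psi_\alpha$, which comes from the Lipschitz property, and existence of the limit, which comes from monotone boundedness. Neither step is a real obstacle, so the argument should be short.
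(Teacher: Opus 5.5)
Your proof is correct, but it takes a different route from the paper's. You both use the upper bound $\psi_\alpha(t)\le\mu$ for $t\le\mu$, which follows from \eqref{eq:mu-ineq} and monotonicity. For the lower bound, the paper runs a contraction argument. It evaluates $\psi_\alpha(t)$ at the fixed maximizer $\mathbf z^*\in\mathcal X$ to get $\psi_\alpha(t)\ge g_m(\mathbf z^*)+tq+\alpha R=\mu-q(\mu-t)$. Hence $0\le\mu-t_{s+1}\le q(\mu-t_s)$, and $t_s\to\mu$ geometrically because $q=(z_1^*)^3<1$. You instead pass to the limit $L$ and use continuity of $\psi_\alpha$ to get $\psi_\alpha(L)=L$. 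You then verify directly that $\mu$ is the least $t$ with $g_m(\mathbf z)+tz_1^3+\alpha z_2^3\le t$ for all $\mathbf z$.

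Your route uses $\mu$ only as the supremum in \eqref{eq:mu-a-def}, so it needs no maximizer. In particular it does not rely on $\mathcal X$ being non-empty, which in the paper requires $\mu>0$ to keep the maximum away from $\mathbf e_1$. It also shows, as a by-product, that $\mu$ is the least fixed point of $\psi_\alpha$. The paper's route gives an explicit geometric rate $\mu-t_s\le q^s\mu$, which fits the paper's remarks on effectiveness. It also reuses $\mathbf z^*$, $q$ and $R$, which are needed anyway in Lemmas~\ref{lem:one-step} and~\ref{lem:nuj-properties}, so in context it is just as short.
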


\begin{proof}
The inequality \eqref{eq:mu-ineq} gives $\psi_\alpha(t)\le \mu$ whenever $t\le \mu$, while evaluating at $\mathbf{z}^*$ gives
\begin{equation}
        \psi_\alpha(t) \ge g_m(\mathbf{z}^*)+tq+\alpha R =\mu-q(\mu-t).
        \label{eq:alpha-contraction}
\end{equation}
Therefore the iterates $t_{s+1}\coloneqq \psi_\alpha(t_s)$, $t_0=0$, satisfy $0\le \mu-t_{s+1}\le q(\mu-t_s)$ and hence $t_s\to \mu$ (since $q<1$ by~\eqref{eq:qR-positive}).
\end{proof}

We now define the full-tail values of the actual sequence $(\beta_i)_{i \ge 1}$. For $j\ge 1$, set
\begin{equation}
        \nu_j\coloneqq \Lambda(\beta_j,\beta_{j+1},\beta_{j+2},\ldots).
        \label{eq:nuj-def}
\end{equation}
Thus $\nu_j$ is the value of the recursive tail that starts with $H_j$ and then uses all later graphs.

\begin{lemma}\label{lem:nuj-properties}
The full-tail values satisfy
\begin{equation}
        \nu_j=\psi_{\beta_j}(\nu_{j+1}).
        \label{eq:nuj-recursion}
\end{equation}
Moreover,
\begin{equation}
        \nu_j\to \mu.
        \label{eq:nuj-to-mu}
\end{equation}
After possibly discarding finitely many initial terms and relabelling the sequence, we also have
\begin{equation}
        \psi_{\beta_j}(\mu)\le \nu_{j+1} \qquad\text{for every }j\ge 1.
        \label{eq:overlap-basic}
\end{equation}
\end{lemma}

\begin{proof}
The recursion \eqref{eq:nuj-recursion} follows directly from the definition of $\nu_j$ and the definition of $\Lambda$. To prove \eqref{eq:nuj-to-mu}, note first that $\nu_j\ge \mu$, by Lemma~\ref{lem:alpha-tail-value}, monotonicity in all side densities, and the inequalities $\beta_i \ge \alpha$ for all $i\ge1$. Since $\beta_i\le \beta_j$ for all $i\ge j$, monotonicity of $\psi_b(t)$ in both variables gives
\[
        \nu_j=\Lambda(\beta_j,\beta_{j+1},\ldots) \le \Lambda(\beta_j,\beta_j, \ldots).
\]
The defining inequality for $\mu(\beta_j)$, namely
\[
        g_m(\mathbf{z})+\mu(\beta_j)z_1^3+\beta_jz_2^3\le \mu(\beta_j) \qquad\text{for all } \mathbf{z}\in\Delta_{m-1},
\]
shows inductively that all finite iterates with the constant sequence $(\beta_j,\beta_j,\ldots)$, started from $0$, are at most $\mu(\beta_j)$. Hence $\nu_j\le \mu(\beta_j)=\mu(\alpha+\eps_j)$. The continuity of $\mu(a)$ and $\eps_j\to 0$ prove \eqref{eq:nuj-to-mu}.

It remains to prove \eqref{eq:overlap-basic}. Set $\delta_j\coloneqq \nu_j-\mu$. Evaluating \eqref{eq:nuj-recursion} at the fixed vector $\mathbf{z}^*$ gives
\begin{align*}
        \nu_j
        =\psi_{\beta_j}(\nu_{j+1})
        &\ge g_m(\mathbf{z}^*)+\nu_{j+1}(z_1^*)^3+\beta_j(z_2^*)^3  \\
        &=g_m(\mathbf{z}^*)+(\mu+\delta_{j+1})q+(\alpha+\eps_j)R 
        =\mu+q\delta_{j+1}+R\eps_j,
\end{align*}
where the last equality uses $g_m(\mathbf{z}^*)+\mu q+\alpha R=\mu$. Hence
\begin{equation}
        \delta_j\ge q\delta_{j+1}+R\eps_j.
        \label{eq:delta-recursion}
\end{equation}
Consequently, for every fixed $N\ge 1$,
\begin{equation}
        \delta_{j+1} \ge R\sum_{s\in[N]} q^{s-1}\eps_{j+s} \qquad\text{for every }j.
        \label{eq:delta-sum}
\end{equation}
Put $\sigma\coloneqq q>0$. Taking $N=2$ in \eqref{eq:delta-sum}, and using \eqref{eq:eps-slow}, we have $\eps_{j+1}/\eps_j\to 1$ and $\eps_{j+2}/\eps_j\to 1$. Hence, for all sufficiently large $j$,
\[
        \eps_{j+1}+q\eps_{j+2}
        \ge \left(1+q-\frac{\sigma}{2}\right)\eps_j
        =\left(1+\frac{\sigma}{2}\right)\eps_j.
\]
Thus \eqref{eq:delta-sum} implies that, for all sufficiently large $j$,
\begin{equation}
        \nu_{j+1}-\mu\ge R(1+\sigma/2)\eps_j.
        \label{eq:nu-lower-gap}
\end{equation}
On the other hand, Lemma~\ref{lem:one-step} gives, for all sufficiently large $j$,
\begin{equation}
        \psi_{\beta_j}(\mu)-\mu =\psi_{\alpha+\eps_j}(\mu)-\mu \le R(1+\sigma/4)\eps_j.
        \label{eq:psi-upper-gap}
\end{equation}
The estimates \eqref{eq:nu-lower-gap} and \eqref{eq:psi-upper-gap} have been proved only for all sufficiently large $j$. We discard the finite initial segment for which they may fail and relabel the remaining sequence so that the first remaining index is again called $1$. Then \eqref{eq:nu-lower-gap} and \eqref{eq:psi-upper-gap} give \eqref{eq:overlap-basic} for every $j\ge 1$.
\end{proof}

We now translate this into the set-valued recursive construction.

Let $T_j\coloneqq \{j,j+1,j+2,\ldots\}$. We use the following notation.

\begin{itemize}
\item If $A=\{a_1<a_2<\cdots\}\subseteq\N$ is infinite, define
\[
        \Lambda_A\coloneqq \Lambda(\beta_{a_1},\beta_{a_2},\ldots).
\]

\item If $A=\{a_1<\cdots<a_s\}\subseteq\N$ is finite, define
\begin{equation}
        \Lambda_A\coloneqq \psi_{\beta_{a_1}}\circ\cdots\circ\psi_{\beta_{a_s}}(\mu).
        \label{eq:Lambda-finite}
\end{equation}
Equivalently, by \eqref{eq:nuj-to-mu} and the $1$-Lipschitz property in the recursive variable, (in the finite case) $\Lambda_A$ is the limit of $\Lambda_{A\cup T_j}$ as $j\to\infty$.
\end{itemize}

For a finite set $C=\{c_1<\cdots<c_s\}\subseteq\N$, define
\[
        \Psi_C\coloneqq \psi_{\beta_{c_1}}\circ\cdots\circ\psi_{\beta_{c_s}},
\]
with $\Psi_\emptyset$ equal to the identity map.
Thus the finite definition above can be written as $\Lambda_C=\Psi_C(\mu)$; more generally, if a tail has recursive value $t$, then attaching the finite prefix $C$ gives the value $\Psi_C(t)$.

\begin{lemma}\label{lem:overlap-property}
Let $j\ge 1$ and let $C\subseteq [j-1]$. Then
\begin{equation}
        \Lambda_{C\cup\{j\}} =\Psi_C(\psi_{\beta_j}(\mu)) \le \Psi_C(\nu_{j+1}) =\Lambda_{C\cup T_{j+1}}.
        \label{eq:overlap-property}
\end{equation}
\end{lemma}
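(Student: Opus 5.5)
The statement is a chain of two equalities and one inequality, and I would verify them left to right, using only the definitions of $\Lambda_A$ and $\Psi_C$, the monotonicity in Fact~\ref{fact:psi-basic}, and inequality~\eqref{eq:overlap-basic} from Lemma~\ref{lem:nuj-properties}.

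\emph{First equality.} Write $C=\{c_1<\cdots<c_s\}\subseteq[j-1]$. Every element of $C$ is smaller than $j$, so the finite set $C\cup\{j\}$ listed in increasing order is $c_1<\cdots<c_s<j$. By the finite definition~\eqref{eq:Lambda-finite},
\[
\Lambda_{C\cup\{j\}}=\psi_{\beta_{c_1}}\circ\cdots\circ\psi_{\beta_{c_s}}\circ\psi_{\beta_j}(\mu)=\Psi_C(\psi_{\beta_j}(\mu)).
\]
The case $C=\emptyset$ is covered by the convention that $\Psi_\emptyset$ is the identity.

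\emph{Last equality.} The infinite set $C\cup T_{j+1}$ in increasing order is $c_1,\dots,c_s,j+1,j+2,\dots$. I would check that prepending finitely many levels commutes with the limit defining $\Lambda$ in~\eqref{eq:Lambda-def}. For $S>s$, the $S$-level truncation factors as
\[
\Lambda_S(\beta_{c_1},\dots,\beta_{c_s},\beta_{j+1},\dots)=\Psi_C\bigl(\Lambda_{S-s}(\beta_{j+1},\beta_{j+2},\dots)\bigr).
\]
As $S\to\infty$, the inner quantity tends to $\nu_{j+1}$ by~\eqref{eq:nuj-def}. Since $\Psi_C$ is a composition of $1$-Lipschitz maps, it is continuous, so the left-hand side tends to $\Psi_C(\nu_{j+1})$. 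This gives $\Lambda_{C\cup T_{j+1}}=\Psi_C(\nu_{j+1})$.

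\emph{Middle inequality.} By~\eqref{eq:overlap-basic}, valid for every $j\ge1$ after the relabelling in Lemma~\ref{lem:nuj-properties}, we have $\psi_{\beta_j}(\mu)\le\nu_{j+1}$. Each $\psi_b$ is non-decreasing in $t$ by Fact~\ref{fact:psi-basic}, so $\Psi_C$ is non-decreasing, and applying it to both sides yields the inequality.

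One bookkeeping point needs care: all arguments fed to the maps $\psi_b$ must lie in $[0,1]$, since Fact~\ref{fact:psi-basic} is stated only there. This holds because $\mu,\nu_{j+1}\in[0,1]$ by Lemma~\ref{lem:mu-basic} and the bound following~\eqref{eq:Lambda-def}, and because each $\psi_b$ maps $[0,1]$ into $[0,1]$ by that same bound. Beyond this and the limit-interchange step, the argument is routine.
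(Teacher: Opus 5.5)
Your proposal is correct and follows the paper's proof essentially step by step. The first equality comes from the finite definition since $C\subseteq[j-1]$, the inequality from applying the non-decreasing map $\Psi_C$ to \eqref{eq:overlap-basic}, and the last equality from factoring the truncations through $\Psi_C$ and using its $1$-Lipschitz continuity as the tail truncations converge to $\nu_{j+1}$. Your extra check that all arguments stay in $[0,1]$ is a harmless addition that the paper leaves implicit.
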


\begin{proof}
Since each map $\psi_b$ is non-decreasing in the recursive variable, so is $\Psi_C$. Therefore \eqref{eq:overlap-basic} gives
\[
        \Psi_C(\psi_{\beta_j}(\mu))\le \Psi_C(\nu_{j+1}).
\]
The identity $\Lambda_{C\cup\{j\}}=\Psi_C(\psi_{\beta_j}(\mu))$ follows directly from the finite definition \eqref{eq:Lambda-finite}, since all elements of $C$ are smaller than $j$.

It remains to identify the right-hand side. For $N\ge j+1$, the finite truncation with tail $\{j+1,\ldots,N\}$ has value $\Psi_C\bigl(\psi_{\beta_{j+1}}\circ\cdots\circ\psi_{\beta_N}(0)\bigr)$. 
As $N\to\infty$, the inner term converges to $\nu_{j+1}$ by the definition of $\nu_{j+1}$, and the finite composition $\Psi_C$ is continuous because it is $1$-Lipschitz. Hence this value tends to $\Psi_C(\nu_{j+1})$, which is precisely $\Lambda_{C\cup T_{j+1}}$.
\end{proof}

We shall use the following special case of the infinite $A$-configuration argument from~\cite{Pikhurko2014}.

\begin{lemma}[{\cite[Proof of Theorem~2, p.~448]{Pikhurko2014}}]
\label{lem:A-config}
Let $A=\{a_1<a_2<\cdots\}\subseteq\N$ be infinite, and let $A$-configurations be as in Construction~\ref{con:A-config}. If $\mathcal F_A$ is the family of all finite $3$-graphs that do not embed into any $A$-configuration, then $\pi(\mathcal F_A)=\Lambda_A$.
\end{lemma}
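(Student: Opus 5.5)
The plan is to prove $\pi(\mathcal F_A)\ge\Lambda_A$ and $\pi(\mathcal F_A)\le\Lambda_A$ separately, in the spirit of the argument in~\cite{Pikhurko2014}.

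\emph{Lower bound.} Every $A$-configuration is $\mathcal F_A$-free, because any finite subgraph of it embeds into it. So it suffices to exhibit $n$-vertex $A$-configurations $G_n$ with $\varrho(G_n)\ge\Lambda_A-o(1)$. Fix $s$ and take the optimal vectors for the truncated composition $\Lambda_s(\beta_{a_1},\ldots,\beta_{a_s})$. At level $\ell$, split the current part with ratios given by a maximiser $\mathbf z^{(\ell)}$ of $\psi_{\beta_{a_\ell}}(t_\ell)$. In the second part, put a blow-up of $H_{a_\ell}$ with class sizes proportional to an optimal Lagrangian weighting. Below level $s$, the first part can be filled with any continuation, since deeper levels only add edges. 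The standard fact that a balanced blow-up of a finite $3$-graph $F$ has density $\lambda(F)-o(1)$ shows, by induction on the levels, that $\varrho(G_n)\ge\Lambda_s-o(1)$. Letting $s\to\infty$ gives $\pi(\mathcal F_A)\ge\Lambda_A$.

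\emph{Upper bound.} I would use Theorem~\ref{thm:tight}-type reasoning, or directly supersaturation and blow-up invariance. Let $G$ be a large $\mathcal F_A$-free $3$-graph. The standard removal and symmetrisation argument shows that we may assume $G$ is blow-up closed up to $o(n^3)$ edges. Since $\mathcal F_A$ is closed under taking blow-ups' complements, every finite subgraph of a blow-up of a subgraph of $G$ embeds into an $A$-configuration. The cleanest route is as follows. By compactness, every finite subgraph $F\subseteq G$ embeds into a finite truncated $A$-configuration of depth $s$ with finitely many vertices, where the depth is bounded in terms of $v(F)$. It then remains to show that every finite $3$-graph $F$ embeddable in an $A$-configuration satisfies $\lambda(F)\le\Lambda_A$.

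\emph{Lagrangian bound.} Fix an embedding of $F$, and let $s$ be the depth of the deepest recursive part that contains a vertex of $F$. Given a weighting $\mathbf y$ on $V(F)$, group the weights by the top-level parts. The contribution of the $m$-partite cross triples is $g_m(\mathbf z)$. Inside part $2$, the weight is at most $z_2^3\lambda(H_{a_1})=z_2^3\beta_{a_1}$, since $F$ restricted to part $2$ embeds into a blow-up of $H_{a_1}$ and blow-ups preserve the Lagrangian. Inside part $1$, we get $z_1^3$ times the Lagrangian of the sub-configuration, which we bound by induction. Parts $3,\dots,m$ contain no inner edges. Inducting from the bottom level with base value $0$ gives $P_F(\mathbf y)\le\Lambda_s(\beta_{a_1},\ldots)\le\Lambda_A$, using the monotonicity in Fact~\ref{fact:psi-basic}. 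Then $\pi(\mathcal F_A)\le\sup\{\lambda(F)\}$ over such $F$: an $\mathcal F_A$-free $G$ of density above $\Lambda_A+\varepsilon$ would, via Theorem~\ref{thm:tight} (which gives $\varrho\approx\lambda$ along extremal sequences), yield a finite subgraph with Lagrangian exceeding $\Lambda_A$, and that subgraph lies in $\mathcal F_A$, a contradiction.

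\emph{Main obstacle.} The main obstacle is this last passage from edge density to a Lagrangian bound. A dense $\mathcal F_A$-free graph need not itself embed into an $A$-configuration; only its finite subgraphs are controlled. I would handle this with Theorem~\ref{thm:tight} together with the hereditary property of $\mathcal F_A$-freeness: every finite subgraph of $G$ embeds into some $A$-configuration, so $\lambda(G)\le\Lambda_A$ by the Lagrangian bound above, and $\varrho(G)\le\lambda(G)+o(1)$ finishes the argument.
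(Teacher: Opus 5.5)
Your proposal is correct and follows essentially the route of the cited argument. The lower bound comes from depth-$s$ truncated $A$-configurations with optimally weighted blow-ups of the $H_{a_\ell}$. The upper bound comes from the fact that an $\mathcal F_A$-free $G$ itself embeds into an $A$-configuration, which is the paper's identity $\ex(n,\mathcal F_A)=p_{A,n}$, combined with your level-by-level bound $\lambda(G)\le\Lambda_A$ and the elementary estimate $\varrho(G)\le(1+O(1/n))\lambda(G)$.

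Your stated \emph{main obstacle} is not one. A finite $\mathcal F_A$-free $G$ is a subgraph of itself, so it cannot lie in $\mathcal F_A$ and therefore embeds directly. The removal/symmetrisation step, the remark about blow-ups' complements, the compactness step and the appeal to Theorem~\ref{thm:tight} are all unnecessary and should be dropped.
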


Let us spell out the correspondence with the cited argument. In the proof of \cite[Theorem~2]{Pikhurko2014}, Pikhurko defines $A$-configurations for an arbitrary infinite set $A=\{a_1<a_2<\cdots\}$, allowing the graph inserted at level $i$ to be $H_{a_i}$. The forbidden family there is precisely the family of finite $3$-graphs not embeddable into the hereditary closure of these recursive configurations. The lower bound comes from finite truncations and blow-ups of the $A$-configuration, while the upper bound is encoded by the identity $\ex(n,\mathcal F_A)=p_{A,n}$, where $p_{A,n}$ is the maximum number of edges in an $n$-vertex $A$-configuration. The limiting value of $p_{A,n}/\binom n3$ is the corresponding recursive Lagrangian, which in our notation is $\Lambda_A$.

\begin{proof}[Proof of Theorem~\ref{thm:interval-3}]
We first show that every point of $(\mu,\nu_1]$ is some $\Lambda_A$ with $A$ infinite. The endpoint $\nu_1$ is obtained from $A=\N$, so let $y\in(\mu,\nu_1)$. We construct $A$ greedily. Suppose that after deciding membership of $1,\ldots,j-1$, the current set is $C\subseteq[j-1]$ and
\begin{equation}
        \Lambda_C<y\le \Lambda_{C\cup T_j}.
        \label{eq:greedy-invariant}
\end{equation}
This holds initially for $j=1$, since $\Lambda_\emptyset=\mu<y\le \nu_1$. At step $j$, include $j$ in $C$ if and only if $\Lambda_{C\cup\{j\}}<y$. If we include $j$, then the lower bound in \eqref{eq:greedy-invariant} holds by definition, and the upper bound at the next step is just the old upper bound, because $(C\cup\{j\})\cup T_{j+1}=C\cup T_j$. If we do not include $j$, then the lower bound is unchanged, while the upper bound follows from \eqref{eq:overlap-property}: $y\le \Lambda_{C\cup\{j\}}\le \Lambda_{C\cup T_{j+1}}$. Thus~\eqref{eq:greedy-invariant} remains valid for all $j$.

Let $A$ be the final set. It is infinite. Indeed, if the process stopped adding elements after some finite set $C_0$, then $\Lambda_{C_0}<y$. Since $\beta_j$ tends to $\alpha$ as $j$ tends to infinity and $\psi_\alpha(\mu)=\mu$, we have
\[
        \lim_{j\to\infty}\psi_{\beta_j}(\mu)=\psi_\alpha(\mu)=\mu .
\]
The finite composition $\Psi_{C_0}$ is continuous, and therefore
\[
        \lim_{j\to\infty}\Lambda_{C_0\cup\{j\}}
        =\lim_{j\to\infty}\Psi_{C_0}(\psi_{\beta_j}(\mu))
        =\Psi_{C_0}(\mu)
        =\Lambda_{C_0}.
\]
Thus every sufficiently large $j$ would satisfy $\Lambda_{C_0\cup\{j\}}<y$ and would be accepted, a contradiction.

Let $C_j\coloneqq A\cap[j-1]$. From the invariant~\eqref{eq:greedy-invariant}, we have 
\begin{equation}
        \Lambda_{C_j}\le y\le \Lambda_{C_j\cup T_j}.
        \label{eq:squeeze1}
\end{equation}
Moreover, since the composition $\Psi_{C_j}$ is $1$-Lipschitz in its input, we have 
\begin{equation}
        0\le \Lambda_{C_j\cup T_j}-\Lambda_{C_j}
        =\Psi_{C_j}(\nu_j)-\Psi_{C_j}(\mu)
        \le \nu_j-\mu .
        \label{eq:squeeze-width}
\end{equation}
By \eqref{eq:nuj-to-mu}, the last quantity tends to $0$ as $j$ tends to infinity.

We now make the squeezing of $\Lambda_A$ explicit. The tail $A\cap T_j$ gives a recursive value between the all-$\alpha$ tail and the full tail $T_j$. Indeed, if $A\cap T_j=\{a_1<a_2<\cdots\}$, then $a_s\ge j+s-1$, and hence $\alpha\le \beta_{a_s}\le \beta_{j+s-1}$ for $s\ge 1$. Monotonicity gives $\mu\le \Lambda_{A\cap T_j}\le \nu_j$. Applying the monotone finite composition $\Psi_{C_j}$, we obtain
\[
        \Lambda_{C_j}=\Psi_{C_j}(\mu) \le \Lambda_A \le \Psi_{C_j}(\nu_j)=\Lambda_{C_j\cup T_j}.
\]
Together with \eqref{eq:squeeze1} and \eqref{eq:squeeze-width}, this implies $\Lambda_A=y$.

It remains to recall why these values are Tur\'an densities. By Lemma~\ref{lem:A-config}, for every infinite $A$ there is an infinite family $\mathcal F_A$ of $3$-graphs such that $\pi(\mathcal F_A)=\Lambda_A$. Therefore every $y\in(\mu,\nu_1]$ belongs to $\Pi^{(3)}_\infty$. In particular, $\nu_j=\Lambda_{T_j}\in\Pi^{(3)}_\infty$ for every $j\ge 1$. By \eqref{eq:nuj-to-mu}, the sequence $(\nu_j)$ tends to $\mu$; since $\Pi^{(3)}_\infty$ is closed by \cite[Proposition 1]{Pikhurko2014}, we also have $\mu\in\Pi^{(3)}_\infty$. Thus $[\mu,\nu_1]\subseteq\Pi^{(3)}_\infty$.

The interval is non-degenerate because $\nu_1\ge \psi_{\beta_1}(\mu)\ge \mu+R\eps_1>\mu$, where the last inequality follows by evaluating $\psi_{\beta_1}(\mu)$ at $\mathbf{z}^*$.
\end{proof}

\section{Consequences and uniformly dense analogues}
\label{sec:extensions-uniform-analogue}
We now derive the remaining consequences from the interval obtained in Section~\ref{sec:recursive-interval}. First, a complete-join operation turns any non-degenerate interval of $r$-uniform Tur\'an densities into an interval ending at $1$. Second, a standard lifting construction transfers the $3$-uniform interval to all higher uniformities. Finally, we use finite palettes to obtain the corresponding dense interval for finite-family uniform Tur\'an densities.

\subsection{Complete joins and terminal intervals}
\label{sec:complete-joins}
In this subsection, we prove the following result. 

\begin{theorem}\label{thm:terminal-from-interval} 
Let $r\ge 3$. Suppose that $\Pi^{(r)}_\infty$ contains a non-degenerate interval $[a,b]\subseteq\Pi^{(r)}_\infty$ for some $a < b$. Then there exists $\delta>0$ such that $[1-\delta,1]\subseteq\Pi^{(r)}_\infty$. 
\end{theorem}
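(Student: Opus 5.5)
We will use the complete join with a complete $r$-partite (or clique-like) structure, working at the level of Theorem~\ref{thm:tight}. Given an $r$-graph $G$ and an integer $t\ge 1$, define $G^{+t}$ as follows. Take $t$ further vertex classes $U_1,\dots,U_t$ together with the vertex set of $G$ (a blow-up of $G$ in the limit). Declare every $r$-set an edge unless it lies entirely inside $V(G)$; sets inside $V(G)$ are edges exactly when they are edges of $G$. This is the recursion-free analogue of the $K_m^{(r)}$ construction in Section~\ref{sec:recursive-interval}. The Lagrangian of such a join is a one-variable function of the Lagrangian of $G$:
\[
\lambda(G\vee K)=\phi(\lambda(G)),\qquad \phi(x)\coloneqq\max_{z\in[0,1]}\bigl\{(1-z^r)+ x z^r\bigr\}
\]
in the version where all vertices outside $G$ are weighted freely. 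Here I let the complement be a huge clique whose weight is spread thinly, so edges meeting it contribute $1-z^r$, where $z$ is the total weight on $V(G)$. Since this maximum is trivially $1$, I instead use a complete $r$-partite-type join with a fixed number $s$ of outside parts. Concretely, I use $\phi_s(x)\coloneqq\max_{\mathbf z\in\Delta_s}\{P_{K^{(r)}_{s+1}}\text{-type polynomial}+x z_1^r\}$, the same form as $\psi_b$ with $b=0$. This gives a continuous, non-decreasing, $1$-Lipschitz map, and it is strictly increasing once $x$ is large enough that the maximizer puts positive mass on part~$1$.

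The key steps are as follows. First, show that $x\in\Pi^{(r)}_\infty$ implies $\phi_s(x)\in\Pi^{(r)}_\infty$. By Theorem~\ref{thm:tight}, pick $G_n$ with $\varrho(G_n)\to x$ and $\lambda(G_n)\to x$. Form the join $J_n$, placing $G_n$ (suitably blown up) in part~$1$, with the other parts sized by an optimal $\mathbf z$. Then $\varrho(J_n)\to\phi_s(x)$. In addition, $\lambda(J_n)=\phi_s(\lambda(G_n))+o(1)$: by symmetrization, a Lagrangian-optimal weighting restricted to part~$1$ contributes at most $\lambda(G_n)z_1^r$, and the rest is the $K_{s+1}^{(r)}$-polynomial. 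So $\lambda(J_n)\to\phi_s(x)$, and Theorem~\ref{thm:tight} gives the membership. Second, by continuity, $\phi_s([a,b])=[\phi_s(a),\phi_s(b)]$ lies in $\Pi^{(r)}_\infty$. This interval is non-degenerate as long as $z_1^r$ is bounded below at the maximizer for $x\in[a,b]$, which holds when $s$ is chosen so that $a>\lambda(K^{(r)}_{s})$. Indeed, if the maximizer ignored part~$1$, the value would be $\lambda(K_s^{(r)})$, and putting a small weight on part~$1$ beats this (in the spirit of Lemma~\ref{lem:mu-basic}). Third, iterate: compose $\phi_{s}$ repeatedly, or take $s\to\infty$, to push intervals upward. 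Since $\phi_s(x)\ge\lambda(K^{(r)}_{s+1})\to 1$, the images approach~$1$. To get a terminal interval, I would show that the union of the intervals $\phi_s^{(\ell)}([a,b])$ over suitable $s$ and $\ell$ covers $[1-\delta,1)$. The point $1$ itself is in $\Pi^{(r)}_\infty$ via the empty family.

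The main obstacle is this covering step. The images must overlap consecutively, which requires the length of the image interval to dominate the gap between successive images. I would first try a single $s$ and iterate $\phi_s$, which has fixed point $1$ and derivative $z_1^r<1$ near $1$. This shrinks lengths geometrically, but it also shrinks the gaps at the same rate, so overlap should propagate once it holds at one level. If this fails, I would instead vary $s$ and use the derivative bound $\phi_s'(x)=z_1(x)^r$ to compare consecutive images explicitly.
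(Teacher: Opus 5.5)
\textbf{There is a genuine gap in your covering step, and it cannot be closed with the maps you chose.} Your first two steps are fine; they are the analogue of Lemma~\ref{lem:complete-join} for your join. The trouble is the third step, which you rightly call the crux. Let $g_{s+1}=P_{K^{(r)}_{s+1}}$ and $\phi_s(x)=\max_{\mathbf z\in\Delta_s}\{g_{s+1}(\mathbf z)+xz_1^r\}$.

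\emph{Iterating a single $\phi_s$ fails.} Taking $\mathbf z=\mathbf e_1$ gives $\phi_s(x)\ge x$. Now put $\mu_s\coloneqq\max_{z_1<1}g_{s+1}(\mathbf z)/(1-z_1^r)$. This is $<1$ for $r\ge 3$: the quotient tends to $0$ at $\mathbf e_1$, and $g_{s+1}(\mathbf z)\le 1-\sum_i z_i^r<1-z_1^r$ elsewhere. Then $g_{s+1}(\mathbf z)+xz_1^r\le\mu_s(1-z_1^r)+xz_1^r\le x$ whenever $x\ge\mu_s$. So $\phi_s$ is the identity on $[\mu_s,1]$ and maps $[0,\mu_s]$ into itself. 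Your claim that it has fixed point $1$ with derivative $z_1^r<1$ nearby is false, and the iterates of $[a,b]$ accumulate at $\mu_s<1$.

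\emph{Varying $s$ fails too.} The map $\phi_s$ sends $[0,\mu_s]$ into $[\lambda(K^{(r)}_{s+1}),\mu_s]$. The maximizer defining $\mu_s$ is nearly uniform with $z_1=O(1/s)$, so $\mu_s\le\lambda(K^{(r)}_{s+1})+z_1^r=\lambda(K^{(r)}_{s+1})+O(s^{-r})$. By contrast, $\lambda(K^{(r)}_{s+2})-\lambda(K^{(r)}_{s+1})\sim\binom r2 s^{-2}$. Since $r\ge3$, the interval $(\mu_s,\lambda(K^{(r)}_{s+2}))$ is non-empty for all large $s$. No composition of maps $\phi_{s'}$ can send a point below it into it, for three reasons:
\begin{itemize}
\item for $s'\le s$, $\phi_{s'}$ preserves $[0,y)$ for every $y>\mu_s$, because $\mu_{s'}\le\mu_s$;
\item for $s'\ge s+1$, its values are at least $\lambda(K^{(r)}_{s'+1})\ge\lambda(K^{(r)}_{s+2})$;
\item every $\phi_{s'}$ dominates the identity.
\end{itemize}
These gaps accumulate at $1$, so no terminal interval arises. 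The root cause is that in your transversal-only join, the non-transversal $r$-tuples cost about $\binom r2/s$ regardless of $x$, while $x$ enters only through $z_1^r=O(s^{-r})$. Images therefore shrink much faster than their distance to $1$.

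\textbf{The missing idea is a join that acts as a homothety centred at $1$.} The paper places a copy of $G$ in each of $M$ parts and adds every $r$-set not contained in a single part, including non-transversal ones. Then the only missing weight lies inside parts, so $P\le 1-(1-\lambda(G))\sum_i w_i^r\le 1-(1-\lambda(G))/M^{r-1}$ by convexity. The edge count gives density tending to $1-(1-x)/M^{r-1}$, and the uniform vector matches this from below. Theorem~\ref{thm:tight} then gives $1-(1-x)/M^{r-1}\in\Pi^{(r)}_\infty$ for every $x\in\Pi^{(r)}_\infty$.

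This map preserves the ratio of the length of $[a,b]$ to its distance from $1$. Since $(M/(M+1))^{r-1}\to 1$, the image intervals $I_M$ overlap consecutively once $(M/(M+1))^{r-1}\ge(1-b)/(1-a)$, which holds for all $M\ge M_0$. Their union is $[1-(1-a)/M_0^{r-1},1)$, and $1\in\Pi^{(r)}_\infty$ completes the terminal interval.
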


\begin{construction}\label{con:complete-join} Let $G$ be an $r$-graph and let $M\ge 2$. Take $M$ vertex-disjoint copies $V_1,\ldots,V_M$ of $V(G)$. Inside each $V_i$, put one copy of $G$, and add every $r$-set which is not contained in a single $V_i$. Denote the resulting $r$-graph by $J_M(G)$. \end{construction}

\begin{lemma}\label{lem:complete-join} Let $r\ge 3$, let $x\in\Pi^{(r)}_\infty$, and let $M\ge 2$. Then $1-\frac{1-x}{M^{r-1}}\in\Pi^{(r)}_\infty$. \end{lemma}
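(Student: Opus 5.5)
We will use the characterisation in Theorem~\ref{thm:tight}. Since $x\in\Pi^{(r)}_\infty$, there are $r$-graphs $G_n$ with $v(G_n)\to\infty$, $\varrho(G_n)\to x$ and $\lambda(G_n)\to x$. We will show that the complete joins $J_M(G_n)$ from Construction~\ref{con:complete-join} satisfy $v(J_M(G_n))=Mv(G_n)\to\infty$ and
\[
\varrho(J_M(G_n))\to 1-\frac{1-x}{M^{r-1}},\qquad \lambda(J_M(G_n))\to 1-\frac{1-x}{M^{r-1}}.
\]
Applying Theorem~\ref{thm:tight} in the reverse direction then gives the lemma.

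The edge density is a direct count. Write $N=v(G_n)$. Then
\[
|J_M(G_n)|=\binom{MN}{r}-M\binom{N}{r}+M|G_n|,
\]
so
\[
\varrho(J_M(G_n))=1-M\binom{N}{r}\binom{MN}{r}^{-1}(1-\varrho(G_n)).
\]
Since $\binom{N}{r}/\binom{MN}{r}\to M^{-r}$ as $N\to\infty$, we get $\varrho(J_M(G_n))\to 1-(1-x)M^{1-r}$.

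For the Lagrangian, take a probability vector $\mathbf z$ on $V(J_M(G_n))$. Let $w_i$ be the mass of $\mathbf z$ on $V_i$, and let $\mathbf y_i$ be the normalised restriction of $\mathbf z$ to $V_i$. We would like the identity
\[
P_{J_M(G)}(\mathbf z)=1-\sum_i w_i^r+\sum_i w_i^r P_G(\mathbf y_i),
\]
as in \eqref{eq:Q-exact}, but it fails because $r$-tuples with repeated vertices must be removed. The correct statement is
\[
P_{J_M(G)}(\mathbf z)\le 1-\sum_i w_i^r\bigl(1-P_G(\mathbf y_i)\bigr).
\]
To see this, note that $P_{K}(\mathbf z)$ for the complete $r$-graph $K$ on all vertices counts ordered $r$-tuples of distinct vertices, and is therefore at most $1$. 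We then subtract, for each $i$, the ordered distinct tuples inside $V_i$ that are non-edges of $G$. Their weight is $w_i^r\bigl(P_{K(V_i)}(\mathbf y_i)-P_G(\mathbf y_i)\bigr)$. The issue is that $P_{K(V_i)}(\mathbf y_i)$ may be less than $1$. This is the main technical point.

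To handle it, use $P_{J}\le P_{K(\text{all})}-\sum_i(\dots)$ together with the fact that the cross-part distinct tuples have weight at most $1-\sum_i w_i^r$. Adding the inside-part edges of weight $\sum_i w_i^rP_G(\mathbf y_i)$ gives the upper bound
\[
\lambda(J_M(G))\le \max_{\mathbf w\in\Delta_{M-1}}\Bigl(1-(1-\lambda(G))\sum_i w_i^r\Bigr)=1-\frac{1-\lambda(G)}{M^{r-1}},
\]
since $\sum_i w_i^r\ge M^{1-r}$ and $1-\lambda(G)\ge 0$.

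For the matching lower bound, put mass $1/M$ on each $V_i$ and use an optimal vector for $G$ inside each part. This gives
\[
\lambda(J_M(G))\ge 1-\frac{1-\lambda(G)}{M^{r-1}}-O\Bigl(\max_v y_v\Bigr),
\]
where the error comes from cross tuples with repeated vertices. This error does not obviously vanish, because an optimal vector for $G_n$ may be concentrated on few vertices.

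The fix is to replace $G_n$ by a large balanced blow-up of a Lagrangian-optimal weighting. Blow-ups preserve the Lagrangian, and their densities approach the Lagrangian. Alternatively, evaluate at the uniform vector on $J_M(G_n)$: it gives exactly the edge density up to $O(1/N)$, so
\[
\lambda(J_M(G_n))\ge\varrho(J_M(G_n))+o(1)\to 1-(1-x)M^{1-r}.
\]
This is cleaner, so I will use the uniform vector as the lower bound, because $\lambda\ge\varrho$ up to $o(1)$ always holds by evaluating at the uniform vector. Combined with the upper bound and $\lambda(G_n)\to x$, the two limits coincide, which finishes the proof.
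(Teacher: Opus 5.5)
Your proposal is correct and follows essentially the same route as the paper. Both arguments use the edge count for $J_M(G_n)$ and the upper bound $P_{J_M(G_n)}(\mathbf z)\le 1-(1-\lambda(G_n))\sum_i w_i^r$, obtained by bounding the cross tuples by $1-\sum_i w_i^r$ and the internal edges by $\lambda(G_n)\sum_i w_i^r$. Both then apply convexity, use the uniform vector for the lower bound, and conclude via Theorem~\ref{thm:tight}.

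The exploratory detours can simply be deleted: the comparison with $P_{K(V_i)}(\mathbf y_i)$ and the abandoned optimal-weighting lower bound are not needed. Also, the normalised restriction $\mathbf y_i$ is undefined when $w_i=0$; you can avoid it by homogeneity, since that part then contributes nothing.
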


\begin{proof}
By Theorem~\ref{thm:tight}, there is a sequence of $r$-graphs $G_n$, with $N_n\coloneqq v(G_n)\to\infty$, such that $\varrho(G_n)\to x$ and $\lambda(G_n)\to x$. Let $J_M(G_n)$ be the complete join from Construction~\ref{con:complete-join}, with parts $V_1,\ldots,V_M$.
The only missing $r$-sets of $J_M(G_n)$ are the missing $r$-sets inside the $M$ internal copies. Hence
\[
        |J_M(G_n)| =\binom{MN_n}{r}-M\binom{N_n}{r}+M |G_n|,
\]
and therefore
\begin{equation}
        \lim_{n\to\infty}\varrho(J_M(G_n)) = 1-\frac{1-x}{M^{r-1}}.
        \label{eq:join-density}
\end{equation}

It remains to prove the same limit for the Lagrangian. Let $\mathbf{y}$ be a probability vector on $V(J_M(G_n))$, and put $w_i\coloneqq \sum_{v\in V_i}y_v$. The contribution to $P_{J_M(G_n)}(\mathbf{y})$ from all cross $r$-sets is at most $1-\sum_{i\in[M]} w_i^r$: this is the weight of all ordered $r$-tuples that are not fully contained in one part, allowing repetitions, and hence dominates the normalized weight of the cross $r$-sets. The internal contribution to $P_{J_M(G_n)}(\mathbf{y})$ is at most $\lambda(G_n)\sum_{i\in[M]} w_i^r$. Therefore
\[
        P_{J_M(G_n)}(\mathbf{y}) \le 1-(1-\lambda(G_n))\sum_{i\in[M]} w_i^r.
\]
By convexity,
\[
        \sum_{i\in[M]} w_i^r\ge M\left(\frac1M\right)^r=\frac1{M^{r-1}}.
\]
Thus
\[
        \lambda(J_M(G_n))\le 1-\frac{1-\lambda(G_n)}{M^{r-1}}.
\]
Taking the limit superior gives
\[
        \limsup_{n\to\infty}\lambda(J_M(G_n))\le 1-\frac{1-x}{M^{r-1}}.
\]
For the reverse inequality, evaluate the Lagrangian at the uniform vector on $V(J_M(G_n))$. This gives
\[
        \lambda(J_M(G_n)) \ge \frac{r!\,|J_M(G_n)|}{(MN_n)^r} =\varrho(J_M(G_n))\frac{(MN_n)_r}{(MN_n)^r},
\]
where $(N)_r\coloneqq N(N-1)\cdots(N-r+1)$. Since $MN_n\to\infty$, \eqref{eq:join-density} gives
\[
        \liminf_{n\to\infty}\lambda(J_M(G_n))\ge 1-\frac{1-x}{M^{r-1}}.
\]
Theorem~\ref{thm:tight} now gives the claim.
\end{proof}

\begin{proof}[Proof of Theorem~\ref{thm:terminal-from-interval}]
For each integer $M\ge 2$, Lemma~\ref{lem:complete-join} gives
\[
        I_M\coloneqq \left[ 1-\frac{1-a}{M^{r-1}}, 1-\frac{1-b}{M^{r-1}} \right] \subseteq\Pi^{(r)}_\infty .
\]
Put $A\coloneqq 1-a$ and $B\coloneqq 1-b$. Since $a<b$, we have $0\le B<A$. The intervals $I_M$ and $I_{M+1}$ overlap exactly when $1-\frac{B}{M^{r-1}}\ge 1-\frac{A}{(M+1)^{r-1}}$, or equivalently when $\frac BA\le \left(\frac{M}{M+1}\right)^{r-1}$. 
Let
\[
        \gamma\coloneqq \left(\frac BA\right)^{1/(r-1)}
        \quad\text{and}\quad
        M_0\coloneqq \max\left\{2,\left\lceil\frac{\gamma}{1-\gamma}\right\rceil\right\}.
\]
Then $0\le \gamma<1$, and $M_0$ is the smallest integer $M\ge 2$ such that the intervals $I_M,I_{M+1},I_{M+2},\ldots$ overlap consecutively. Indeed, for an integer $M\ge 2$, the overlap condition is equivalent to $\gamma\le \frac{M}{M+1}$, which is equivalent to $M\ge \gamma/(1-\gamma)$.
Thus
\[
        \bigcup_{M\ge M_0} I_M
        \supseteq \left[1-\frac{A}{M_0^{r-1}},1\right).
\]
Since $1\in\Pi^{(r)}_\infty$, we may take $\delta\coloneqq \frac{A}{M_0^{r-1}}=\frac{1-a}{M_0^{r-1}}$. 
Then $[1-\delta,1]\subseteq\Pi^{(r)}_\infty$, as required.
\end{proof}

\subsection{Lifting to higher uniformities}
\label{sec:lifting}
In this subsection we lift the preceding $3$-uniform result to all higher uniformities. For $r\ge 3$, put
\[
        c_r\coloneqq \frac{3^3 r!}{3!\,r^r}=\frac{9r!}{2r^r}.
\]

\begin{lemma}\label{lem:uniformity-lift} 
Let $r\ge 3$. If $x\in\Pi^{(3)}_\infty$, then $c_rx\in\Pi^{(r)}_\infty$. Consequently, if $[a,b]\subseteq\Pi^{(3)}_\infty$, then $[c_ra,c_rb]\subseteq\Pi^{(r)}_\infty$. 
\end{lemma}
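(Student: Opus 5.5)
We use the characterization in Theorem~\ref{thm:tight} in both directions, together with a standard lifting construction. Start with $x\in\Pi^{(3)}_\infty$. By Theorem~\ref{thm:tight} there are $3$-graphs $G_n$ with $N_n\coloneqq v(G_n)\to\infty$, $\varrho(G_n)\to x$ and $\lambda(G_n)\to x$.

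\emph{Lifting construction.} Define an $r$-graph $L_n$ as follows. Take a vertex set $V(G_n)\sqcup U_1\sqcup\cdots\sqcup U_{r-3}$, where each $U_\ell$ has $N_n/3$ vertices (rounded). An $r$-set is an edge exactly when it has one vertex in each $U_\ell$ and its remaining $3$ vertices form an edge of $G_n$. The weights $3/r$ on $V(G_n)$ and $1/r$ on each $U_\ell$ are the natural choice, since they maximize $y^3\prod_\ell u_\ell$ subject to $y+\sum_\ell u_\ell=1$. Counting edges gives
\[
|L_n|=|G_n|\,(N_n/3)^{r-3}\approx \varrho(G_n)\frac{N_n^3}{6}\Bigl(\frac{N_n}{3}\Bigr)^{r-3},
\]
while the total number of vertices is $v(L_n)=rN_n/3$. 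Hence
\[
\varrho(L_n)\to x\,\frac{r!\,(1/3)^{r-3}}{6\,(r/3)^r}=x\,\frac{27\,r!}{6\,r^r}=c_rx .
\]

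\emph{Lagrangian.} Take any probability vector. Let $Y$ be its mass on $V(G_n)$ and $u_\ell$ its mass on $U_\ell$. Then
\[
P_{L_n}=r!\,\frac{P_{G_n}(\cdot)}{3!}\prod_\ell u_\ell \le \frac{r!}{6}\,\lambda(G_n)\,Y^3\prod_\ell u_\ell .
\]
By AM--GM with the constraint $Y+\sum_\ell u_\ell=1$, the product $Y^3\prod_\ell u_\ell$ is maximized at $Y=3/r$ and $u_\ell=1/r$, so it is at most $(3/r)^3 r^{-(r-3)}=27/r^r$. This gives the upper bound $\lambda(L_n)\le c_r\lambda(G_n)\to c_rx$. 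For the lower bound, evaluate at the uniform vector on $L_n$. As in the proof of Lemma~\ref{lem:complete-join}, the value there is $\varrho(L_n)(1-o(1))$, which tends to $c_rx$. Theorem~\ref{thm:tight} then gives $c_rx\in\Pi^{(r)}_\infty$.

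The interval statement follows immediately: $y\mapsto c_ry$ maps $[a,b]$ onto $[c_ra,c_rb]$. The only delicate points are the rounding of $|U_\ell|$ and the choice of normalization so that $\varrho$ and $\lambda$ agree in the limit. Both are routine, since $N_n\to\infty$ and the rounding error is $O(1/N_n)$. The main check is the AM--GM optimization with exponents $(3,1,\dots,1)$.
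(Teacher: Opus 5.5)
Your proposal is correct and follows essentially the same route as the paper: the same lift of $G_n$ with $r-3$ auxiliary parts of size about $N_n/3$, the same edge count giving $\varrho\to c_rx$, and the same upper bound $\lambda(L_n)\le c_r\lambda(G_n)$ obtained by maximizing $Y^3\prod_\ell u_\ell$, followed by Theorem~\ref{thm:tight}. The only difference is the lower bound on the Lagrangian: the paper places a scaled $G_n$-optimal vector on $V(G_n)$ and mass $1/r$ on a single vertex of each $U_\ell$, giving the exact identity $\lambda(L_n)=c_r\lambda(G_n)$, whereas your evaluation at the uniform vector gives $\lambda(L_n)\ge\varrho(L_n)(1-o(1))$, which works equally well.
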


\begin{construction}[Uniformity lifts]\label{con:uniformity-lift} 
Let $r\ge 3$ and let $G$ be a $3$-graph. If $r=3$, set $L_r(G)\coloneqq G$. If $r>3$, put $s\coloneqq r-3$ and $N\coloneqq v(G)$, and let $\ell\coloneqq \lfloor N/3\rfloor$. Define the $r$-graph $L_r(G)$ as follows. Its vertex set is $V(G)\sqcup U_1\sqcup\cdots\sqcup U_s$ with $|U_1|=\cdots=|U_s|=\ell$, and its edges are exactly the sets $e\cup\{u_1,\ldots,u_s\}$ for $e \in G$ and $u_i \in U_i$. 
\end{construction}

\begin{proof}[Proof of Lemma~\ref{lem:uniformity-lift}]
The case $r=3$ is trivial, so assume $r>3$ and set $s\coloneqq r-3$. By Theorem~\ref{thm:tight}, there is a sequence of $3$-graphs $G_n$, with $N_n\coloneqq v(G_n)\to\infty$, such that $\varrho(G_n)\to x$ and $\lambda(G_n)\to x$.

Let $H_n\coloneqq L_r(G_n)$ be the lift from Construction~\ref{con:uniformity-lift}, with auxiliary parts $U_1,\ldots,U_s$, and put $\ell_n\coloneqq \lfloor N_n/3\rfloor$. Thus $|H_n|=|G_n|\ell_n^s$. Since $\frac{N_n}{N_n+s\ell_n}\to \frac3r$ and $\frac{\ell_n}{N_n+s\ell_n}\to \frac1r$, we get
\begin{align*}
        \varrho(H_n)
        &=
        \frac{|G_n|\ell_n^s}{\binom{N_n+s\ell_n}{r}}  
        =
        \varrho(G_n)\frac{\binom{N_n}{3}\ell_n^s}{\binom{N_n+s\ell_n}{r}}.
\end{align*}
Taking $n$ to infinity in the last expression gives
\[
        \lim_{n\to\infty}\varrho(H_n)
        =
        x\cdot \frac{r!}{3!}\left(\frac3r\right)^3\left(\frac1r\right)^s
        =c_rx.
\]

It remains to compute the Lagrangian. Let $\mathbf{y}$ be a probability vector on $V(H_n)$. Write $A\coloneqq \sum_{v\in V(G_n)}y_v$ and $B_i\coloneqq \sum_{u\in U_i}y_u$. If $A>0$, let $\widehat{\mathbf{y}}$ be the normalized restriction of $\mathbf{y}$ to $V(G_n)$. Then
\begin{align*}
        P_{H_n}(\mathbf{y})
        &=
        r!\sum_{e\in G_n}\prod_{v\in e}y_v\prod_{i=1}^s B_i  
        =
        \frac{r!}{3!}A^3\prod_{i=1}^s B_i\,P_{G_n}(\widehat{\mathbf{y}})  
        \le
        \frac{r!}{3!}A^3\prod_{i=1}^s B_i\,\lambda(G_n).
\end{align*}
The same inequality is trivial when $A=0$. Under the constraint $A+B_1+\cdots+B_s=1$, the product $A^3B_1\cdots B_s$ is maximized at
\[
        A=\frac3r,\qquad B_1=\cdots=B_s=\frac1r.
\]
Hence
\[
        \lambda(H_n) \le \frac{r!}{3!}\left(\frac3r\right)^3\left(\frac1r\right)^s\lambda(G_n) =c_r\lambda(G_n).
\]
Equality is obtained by taking a $G_n$-optimal vector on $V(G_n)$, scaling it by $3/r$, and putting total mass $1/r$ on one vertex of each $U_i$. Therefore $\lambda(H_n)=c_r\lambda(G_n)\to c_rx$. Thus $H_n$ is asymptotically Lagrangian-tight with limiting density $c_rx$. By Theorem~\ref{thm:tight}, $c_rx\in\Pi^{(r)}_\infty$.
\end{proof}

Theorem~\ref{thm:main} is now a direct consequence of Theorem~\ref{thm:interval-3}, Lemma~\ref{lem:uniformity-lift}, and Theorem~\ref{thm:terminal-from-interval}. 

\begin{proof}[Proof of Theorem~\ref{thm:main}]
By Theorem~\ref{thm:interval-3}, there is a non-degenerate interval $[\mu,\nu_1]\subseteq\Pi^{(3)}_\infty$. Let $r\ge 3$. By Lemma~\ref{lem:uniformity-lift}, $[c_r\mu,c_r \nu_1]\subseteq\Pi^{(r)}_\infty$. This interval is non-degenerate because $c_r>0$ and $\nu_1>\mu$. Applying Theorem~\ref{thm:terminal-from-interval} in uniformity $r$ gives a number $\delta_r>0$ such that $[1-\delta_r,1] \subseteq\Pi^{(r)}_\infty$.
\end{proof}

\subsection{Dense intervals of finite-family uniform Tur\'an densities}
\label{sec:dense-uniform-interval}
In this subsection, we briefly explain how to obtain Corollary~\ref{cor:dense-uniform-finite-interval}.
We first fix the palette notation. A finite \emph{palette} $P$ consists of a finite color set $C(P)$ and a set $\mathcal T(P)\subseteq C(P)^3$ of admissible ordered triples. 
The \emph{finite palette Lagrangian} of $P$ is
\[
        \Lambda(P) \coloneqq \max_{\mathbf{x}\in\Delta_{C(P)}} \sum_{(a,b,c)\in\mathcal T(P)} x_a x_b x_c .
\]
Recall that $\Delta_{C(P)}$ consists of all probability vectors on $C(P)$. Let
\[
        \Lambda^{(3)}_{\mathrm{pal}}\coloneqq \{\Lambda(P)\colon P\text{ is a finite palette}\}.
\]

The external input we use here is the following theorem of King, Piga, Sales and Sch\"ulke~\cite{KingPigaSalesSchulke2025}: every Lagrangian of a finite palette is the uniform Tur\'an density of some finite family of $3$-graphs. For completeness, we include the brief explanation showing how $3$-graph Lagrangians are realized as finite palette Lagrangians.

\begin{lemma}[\cite{KingPigaSalesSchulke2025}]\label{lem:graph-lagrangian-palettes}
For every $k\in\{1,2,3,4,5,6\}$, we have $\frac{k}{6}\Lambda^{(3)}\subseteq \Lambda^{(3)}_{\mathrm{pal}} \subseteq \Pi_{\vvv,\fin}$. 
\end{lemma}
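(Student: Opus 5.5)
The lemma has two inclusions, and I would prove them separately. The second inclusion, $\Lambda^{(3)}_{\mathrm{pal}}\subseteq\Pi_{\vvv,\fin}$, is exactly the main theorem of King, Piga, Sales and Sch\"ulke~\cite{KingPigaSalesSchulke2025}. I would cite it as a black box, noting only that their palette Lagrangian is normalized the same way as ours: a sum over admissible ordered triples with no $3!$ factor. All the work therefore goes into the first inclusion. Given a finite $3$-graph $G$ and $k\in\{1,\dots,6\}$, I must build a finite palette $P$ with $\Lambda(P)=\frac k6\lambda(G)$.

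The construction I would use is as follows. Let $C(P)\coloneqq V(G)$. For each edge $e=\{u,v,w\}\in G$, fix $k$ of its $6$ orderings $(a,b,c)$ and put them into $\mathcal T(P)$. Since distinct edges give disjoint sets of ordered triples, and ordered triples with repeated colours are never admitted, for every $\mathbf x\in\Delta_{V(G)}$ we get
\[
\sum_{(a,b,c)\in\mathcal T(P)}x_ax_bx_c=k\sum_{e\in G}\prod_{v\in e}x_v=\frac k6 P_G(\mathbf x).
\]
Maximizing over $\Delta_{V(G)}$ then gives $\Lambda(P)=\frac k6\lambda(G)$. For $k=6$ we admit all orderings, so $\Lambda(P)=\lambda(G)$ exactly.

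The one point needing care is to check that the palettes in~\cite{KingPigaSalesSchulke2025} put no extra conditions on $\mathcal T(P)$. For instance, they should not require closure under permutations or forbid a triple from appearing without its reversal. If some symmetry condition were imposed, only $k=6$ would be immediate, and for the other values of $k$ I would look for the appropriate orientation choices allowed there. I expect this bookkeeping against the external definition to be the only real obstacle; the algebra above is routine.
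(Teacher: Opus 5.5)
Your proposal is correct and follows essentially the same route as the paper. The second inclusion is quoted from \cite{KingPigaSalesSchulke2025} (their Theorem~1.1). For the first, you take $C(P)=V(G)$ and admit $k$ distinct orderings of each edge, so that the palette polynomial equals $\frac{k}{6}P_G(\mathbf{x})$ and hence $\Lambda(P)=\frac{k}{6}\lambda(G)$.

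The paper uses a single set $T\subseteq S_3$ of $k$ permutations for every edge, which is a special case of your edge-by-edge choice. The palette definition used here places no symmetry or closure condition on $\mathcal T(P)$, so the concern you raise does not arise.
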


\begin{proof}
The second inclusion $\Lambda^{(3)}_{\mathrm{pal}}\subseteq\Pi_{\vvv,\fin}$ is precisely the theorem of King, Piga, Sales and Sch\"ulke~\cite[Theorem~1.1]{KingPigaSalesSchulke2025}.

Let us justify the first inclusion.  Fix a finite $3$-graph $G$ and $k\in\{1,\ldots,6\}$. Let $T\subseteq S_3$ be any set of $k$ permutations. Define a finite palette $P=P(G,k)$ with color set $C(P)=V(G)$ as follows. For every edge $e=\{a,b,c\}\in G$, choose once and for all an ordering $(v_1,v_2,v_3)$ of its vertices, and include exactly the ordered triples
\[
        (v_{\sigma(1)},v_{\sigma(2)},v_{\sigma(3)}) \qquad\text{for all } \sigma\in T
\]
as admissible triples corresponding to $e$. These $k$ ordered triples are distinct because the vertices of $e$ are distinct.

For every $\mathbf{x}\in\Delta_{V(G)}$, the palette polynomial satisfies
\[
        \sum_{(i,j,\ell)\in\mathcal T(P)}x_ix_jx_\ell
        =
        k\sum_{\{u,v,w\}\in G}x_ux_vx_w.
\]
Taking the maximum over all $\mathbf{x}\in\Delta_{V(G)}$ gives
\[
        \Lambda(P)
        =
        k\max_{\mathbf{x}\in\Delta_{V(G)}}\sum_{\{u,v,w\}\in G}x_ux_vx_w
        =
        \frac{k}{6}\lambda(G),
\]
where the last equality uses the normalized Lagrangian convention from Section~\ref{sec:preliminaries}. Since $G$ was arbitrary, the claim follows.
\end{proof}

\begin{proof}[Proof of Corollary~\ref{cor:dense-uniform-finite-interval}]
By Theorem~\ref{thm:interval-3}, there is a non-degenerate interval $[\mu,\nu_1]\subseteq\Pi^{(3)}_\infty$. By Theorem~\ref{thm:pikhurko-lagrangian-densities}, we have $\overline{\Lambda^{(3)}}=\Pi^{(3)}_\infty$. Hence $\Lambda^{(3)}$ is dense in $[\mu,\nu_1]$.
By Lemma~\ref{lem:graph-lagrangian-palettes}, we have $\Lambda^{(3)}\subseteq\Pi_{\vvv,\fin}$. Thus $\Pi_{\vvv,\fin}$ is dense in $[\mu,\nu_1]$, and therefore $[\mu,\nu_1]\subseteq\overline{\Pi_{\vvv,\fin}}$. 
This proves the corollary.
\end{proof}

\section{Concluding remarks}
\label{sec:concluding-remarks}
$\bullet$ Although our terminal interval lies far above the expected first non-jump, it is in line with the conjectural picture proposed by  Mubayi and the first author~\cite{LiuMubayi2026FourNinths}. They conjecture that, for $3$-graphs, every number in $[0,4/9)$ is a jump and every number in $[4/9,1)$ is a non-jump; our interval near $1$ supports the idea that non-jump behaviour may occur on whole intervals rather than only at isolated values.

\medskip 

$\bullet$ Our proof uses a stronger form of non-jump behaviour. Let us call a point $x\in[0,1)$ a \emph{smooth non-jump point} if there are finite $r$-graphs $G_i$ such that
\[
        \lambda(G_i)>x,\qquad \lambda(G_i)\downarrow x,\qquad\text{and}\qquad
        \frac{\lambda(G_i)-x}{\lambda(G_{i+1})-x}\to 1.
\]
Thus the possible densities approach $x$ from above with no asymptotically visible gaps between successive approximants. Our construction gives such a point for $r=3$, and the rest of the proof shows how one smooth non-jump point can be amplified into intervals of possible Tur\'an densities.

\begin{problem}
How common are smooth non-jump points? Does every non-jump arise, or can every non-jump be approximated, in this stronger smooth sense?
\end{problem}

\medskip 

$\bullet$ Although we have made no attempt to optimize the constants, the proof is effective in principle. For instance, in the $3$-uniform case one may choose explicit parameters in the spectral construction, say
\[
        p=137,\qquad D=p+1=138,\qquad k=30,\qquad m=2701 .
\]
With these choices the parameter $\alpha$ is $899/900$, and the finite-dimensional optimization defining $\mu$ reduces, by symmetry, to a two-variable rational optimization problem after symmetrizing the coordinates $3,\ldots,m$. This can be certified by interval arithmetic; numerically one obtains
\[
        \mu = 0.9988895745058906452116222183898284685\ldots .
\]
The remaining quantities $\nu_1-\mu$ and the terminal interval constant $\delta_3$ depend on the effective choice of the auxiliary LPS primes in a fixed arithmetic progression (this is where the dominant loss comes from). Using explicit estimates for primes in arithmetic progressions, for example the bounds of Bennett, Martin, O'Bryant and Rechnitzer~\cite{BennettMartinOBryantRechnitzer2018}, and then choosing a sufficiently far tail of primes $q\equiv 1\pmod {548}$, gives an explicit positive lower bound on $\nu_1-\mu$. Combined with the complete join argument of Subsection~\ref{sec:complete-joins}, this yields an explicit value of $\delta_3>0$ for which $[\,1-\delta_3,1\,]\subseteq \Pi_\infty^{(3)}$. 
The resulting numerical value is extremely small and is not intended to be sharp; the point is only that the construction can be made fully effective.

In relation to Corollary~\ref{cor:dense-uniform-finite-interval}, the obvious  open problems are to show that every value in some non-degenerate interval is a uniform Tur\'an density of $3$-graphs and to extend it to arbitrary uniformity $r\ge 4$.

\section*{Acknowledgement}

Xizhi Liu was supported by the Excellent Young Talents Program (Overseas) of the National Natural Science Foundation of China. Oleg Pikhurko was supported by ERC Advanced Grant 10102025.

\section*{Declaration on the use of generative AI}

The beautiful idea of using Ramanujan graphs for the internal spectral gadget was suggested by generative AI tools (ChatGPT 5.4 Pro and 5.4 Thinking). The other main ideas and constructions in the paper originate from the authors' project dating back to around 2023.

\bibliographystyle{abbrv}
\bibliography{Turan}
\end{document}